\newtheorem{definition}{Definition}
\newtheorem{theorem}{Theorem}
\numberwithin{equation}{section}
\newcommand{\R}{\mathbb{R}}
\DeclareMathOperator{\sign}{sgn}
\pgfplotsset{compat=1.14}
\begin{document}
\title{Qualitative Properties of Mathematical Model For Data Flow\thanks{The United States Government retains and the publisher, by accepting the article for publication, acknowledges that the United States Government retains a non-exclusive, paid-up, irrevocable, world-wide license to publish or reproduce the published form of this manuscript, or allow others to do so, for the United States Government purposes. The Department of Energy will provide public access to these results of federally sponsored research in accordance with the DOE Public Access Plan (\texttt{http://energy.gov/downloads/doe-public-access-plan})} }
\date{\today}
\author{Cory D. Hauck\footnote{hauckc@ornl.gov} \\
	{\small\it Computational and Applied Mathematics Group} \\
	{\small\it Oak Ridge National Laboratory} \\
	{\small\it 1 Bethel Valley Road, Bldg. 5700, Oak Ridge, TN 37831-6164, USA} \\ 
	{\small and} \\
	{\small\it Department of Mathematics} \\
	{\small\it University of Tennessee} \\
	{\small\it 227 Ayres Hall. 1403 Circle Drive. Knoxville TN 37996-1320, USA \vspace{10mm}} \\
	Michael Herty\footnote{herty@igpm.rwth-aachen.de} \and
	Giuseppe Visconti\footnote{visconti@igpm.rwth-aachen.de} \\
	{\small\it Institut f\"{u}r Geometrie und Praktische Mathematik (IGPM)} \\
	{\small\it RWTH Aachen University} \\
	{\small\it Templergraben 55, 52062 Aachen, Germany\vspace{5mm}} 
}

\maketitle

\begin{abstract}
In this paper, properties of a recently proposed mathematical model for data flow in large-scale asynchronous computer systems are analyzed.
In particular, the existence of special weak solutions based on propagating fronts is established.  Qualitative properties of these solutions are investigated, both theoretically
and numerically. 

\end{abstract}

\paragraph{MSC} 35L65, 35L40
\paragraph{Keywords} conservation laws, predictive performance

\section{Introduction} \label{sec:intro}

The increasing number and diversity of processing units in modern supercomputers presents a significant challenge to the understanding of how data is globally distributed in these systems.  This issue is critically important in emerging computer architectures for which the cost of data movement is becoming a critical driver for system design, affecting hardware and software choices and even basic algorithms \cite{Dongarra2014,vetter2018extreme,murray2018basic}.  
At the same time, these supercomputers are becoming increasingly difficult to model. Indeed, simulating the detailed state of a supercomputer requires an even larger computer, thereby making such approaches prohibitive for the largest systems.  

With the help of new mathematical models, a more efficient utilization of available processing units might be possible.  Such models are necessarily coarse-grained approximations, used to understand the gross behavior of the system in order to make design, control, and optimization more tractable.  In this spirit, a coarse-grained mathematical model has been recently derived in \cite{Barnard2019} to describe the movement of processed data in an extreme-scale computer system.   This model focuses on a idealized setting, in which a lattice of processors perform operations in parallel and communicate with nearest neighbors.  Although very simple, the model allows for local variations in processor speed, due to differences in hardware performance or local problem complexity, as well as asynchronously operation, meaning that processors manipulate and share data with their neighbors as they are able, as opposed to a staged process in which processors communicate only after they have all completed a step of assigned work.   

The model proposed in \cite{Barnard2019} takes the form of a PDE for the density of data at a particular processor location and stage in the global computation.  It is derived formally from a microscopic ODE description.  The notion of coarse-graining discrete and semi-discrete model of networks and related systems to derive PDE models is common mathematical strategy that is used in many applications, including vehicular~\cite{aw2002SIAP,DiFrancescoRosini2015,HoldenRisebro2019} and pedestrian traffic flow~\cite{DegondEtAl2011,BellomoDogbe2008,ChertockEtAl2014,Helbing1992}, supply chains~\cite{ArmbrusterEtAl2006,ArmbrusterEtAl2011}, bacterial movement~\cite{GalanteLevy2013} and machine learning applications~\cite{ChanEtAl2019}.

In the current work, we establish some qualitative properties of the coarse-grained PDE model in \cite{Barnard2019}.  We focus specifically on the phenomena of front propagation and present simulation results that highlight the theoretical findings.  In the context of the current application, these fronts describe the movement of data through the different stages of a computational task.  The reason for this analysis is two-fold.  First, it was observed in \cite{Barnard2019} that, after a transient phase, many initial conditions eventually relax to a front whose shape depends on local variations in the processor speed. Second, these fronts can be used to characterize three important features for application:  the first time at which some portion of the data reaches a state of completion, the rate at which the rest of the data reaches this state, and the time at which all data reaches this stage.  Moreover, the relatively simple form of these fronts enables the investigation of optimization and control strategies based on some application-specific metrics.  As a first step in this direction, we explore how variations in the processor speed affect front behavior.

\section{Mathematical description of the model}
  In this section, we briefly summary the microscopic ODE model from \cite{Barnard2019} and the macroscopic model that is obtained from it.  The microscopic model describes a computer system as a lattice of processors, the dimension of which can be arbitrary, but finite.  For application purposes, only one-, two-, and three-dimensional lattices are realistic, and here we focus the one-dimensional case for simplicity.  It is assumed in \cite{Barnard2019} that each processor performs the same task, which is broken into discrete stages.  The rate at which a given processor moves data from stage to stage depends not only on its intrinsic processing rate, but also on the availability of usable data in the processor and its neighbors.  
  
  Mathematically, the amount of data at time $t$ that sits in stage $k \in \{1,\dots,k_{\max}\}$ of processor $i \in \{1,\dots,i_{\max}\}$ is given by $q_{i,k}(t)$.  
In the absence of any throttling, the flow of data between stages is given by a (processor dependent) rate $a_{i}(t) \geq 0$.  However the actual flow may be reduced due to a lack of available data in a given processor or its nearest neighbors. Throttling due to a lack of usable data in the nearest neighbors is characterized by the parameter $\eta:= k_{\max} / i_{\max} >0$.  As $\eta$ increases the global effect of local slowdowns increases.
\par 
For each $i \in \{1,\dots,i_{\max}\}$ and $k \in \{1,\dots,k_{\max}\}$, the evolution of  $q_{i,k}$
is modeled by  the ordinary differential equation
\begin{align}
\dot{q}_{i,k}(t) = f_{i,k-1}(t) - f_{i,k}(t), \quad 
{q}_{i,k}(0)={q}^0_{i,k}, \quad
f_{i,0}(t)=f^{\rm{in}}_{i}(t),
\label{eq:ode}
\end{align}
where ${q}^0_{i,k}$ is the (known) initial amount of data present in each processor $i$ at each stage $k$ and $f^{\rm{in}}_{i}(t)$ is the (known) inflow of unprocessed (or raw) data
at processor $i$ and time $t$.  For $k \in \{1,\dots,k_{\max}\}$, $f_{i,k}$ is the flow of data in processor $i$ from stage $k$ to $k+1$. It is given by the product of the maximum processing rate $a_i$ and the composition of two \textit{throttling functions} $v_1$ and $v_2$; that is,
\begin{equation}
	\label{eq:flux}
	f_{i,k} 
	= a_i v_1\Big(v_2\big(q_{i,k},Q_{i+1,k}-Q_{i,k}+q_{i,k},Q_{i-1,k}-Q_{i,k}+q_{i,k}\big);q_*\Big),
\end{equation} 
where $Q_{i,k}(t)$ is the total amount of data in processor $i$ that by time $t$ has traversed the first $k-1$ stages of the computation:
\begin{align}\label{S}
	Q_{i,k}(t) =
 \sum_{j=k}^{k_{\max}} q_{i,j}(t) + \int_0^t f_{i,k_{\max}}(s) \mathrm{d}s,
\end{align}
and $Q_{i\pm1,k}-Q_{i,k}+q_{i,k}$ is the amount of usable data available from processor $i \pm1$.  The throttling functions are given by
\begin{align} \label{eq:v1}
		v_1(q;q_*) &= \max \left \{0, \min \left \{1,\frac{q}{q_*} \right\} \right\}, \\
		\label{eq:v2}
v_2(q,\Delta_+,\Delta_-) &= \min\left\{q_{i,k},\max\{\Delta_+,0\},\max\{\Delta_-,0\}\right\}.
\end{align}
The function $v_1$ is a linear ramp, sometimes referred to as roof-line model \cite{williamsroofline2009}.  The steepness of the ramp is determined by the parameter $q_\ast>0$,  the value at which the processor is fully on.  The function $v_2$ determines the amount of data ready to process based on what is available locally and from nearest neighbors.  In particular, if not enough data is available from neighboring processors, then $f_{i,k} < a_i$.%
\footnote{In \cite{Barnard2019}, there is another parameter $\beta \in [0,1]$ in the definition of $v_2$ that measures the effective strength of coupling between processors.  Here we assume $\beta=1$ and do not consider it any further.}

The macroscopic model in \cite{Barnard2019} is a  continuum approximation for \eqref{eq:ode} that is derived in the limit of infinitely many processors ($i_{\max} \to \infty$) and stages ($k_{\max} \to \infty$), subject to that constraint that $\eta$ is a fixed, positive, finite constant.   
This continuum model is a PDE for a function $\rho$, where $\rho(t,x,z)$ is interpreted as the local density of data at time $t$, processor location $x$, and stage completion variable $z$.  It takes the form
\begin{subequations}\label{eq_model}
\begin{alignat}{2}
\partial_t \rho(t,x,z) + \partial_z \Phi(\rho(t,x,z), \partial_x P(t,x,z) ) = 0 , 
     && \qquad &(t,x,z) \in \mathbb{R}^{+} \times \mathbb{T} \times (0,1), \label{eq_PDE2} \\
\rho(0,x,z)  =\rho_0(x,z),
    && \qquad &(x,z) \in \mathbb{T} \times (0,1), \label{eq_IC}\\
\Phi\big( \rho(t,x,0), \partial_x P(t,x,0) \big)  = F^{\rm{in}}(t,x),
    && \qquad &(t,x) \in \mathbb{R}^{+}\times (0,1), \label{eq_inflow}
\end{alignat} 
\end{subequations}
where $\mathbb{R}^+ = (0, \infty)$; $\mathbb{T}$ is the torus parameterized by $x \in [0,1)$;  $F^{\rm{in}}$ is a scaled, continuous version of $f^{\rm{in}}$; $P$ and $\Phi$ are given by
\begin{subequations}\label{eq:model_aux}
\begin{align}
   P(t,x,z) &= \int_z^1 \rho(t,x,y) \mathrm{d}y + \int_0^t \Phi \big( \rho(s,x,1), \partial_x P (s,x,1) \big) \mathrm{d}s, & \label{recursive} \\
\Phi(\rho, \sigma ) &= \alpha \; w_1\big(w_2(\rho, \sigma ,-\sigma  ));& \label{eq:flux1}  
\end{align}
\end{subequations}
and $w_1$ and $w_2$ are scaled version of $v_1$ and $v_2$:
\begin{subequations}\label{eq:throlling}
\begin{align}
w_1(u)&=\min\left\{ \max \left\{\frac{u}{\rho_*},0 \right \}, 1 \right \} \label{eq:w1} \\
w_2(u,v_1,v_2)&=\min \Big\{ u,\max\{u+\eta v_1,0\},\max\{u+\eta v_2,0\}\Big \}.  \label{eq:w2}
\end{align}
\end{subequations}
The model \eqref{eq_model} is a conservation law that describes the movement of data entering the system at $z=0$ and exiting at $z=1$. The flux $\Phi$ is the composition of the two scaled throttling functions:  First $w_2$ determines the amount of data available based on the local density and the density of left and right neighbors.  Once $w_2$ is specified, $w_1$ computes the effective processing speed assuming a linear ramp, where the parameter $\rho_* > 0$ is the minimum data density needed to operate at full capacity.  The parameter $\eta$ controls the degree of throttling due to the lack of available data in neighboring processors. 

At first glance, \eqref{eq_model} appears to include a recursive
definition of the flux function $\Phi$ due to equation \eqref{recursive}.  However, after integrating \eqref{eq_PDE2} over $(s,z) \in [0,t] \times [0,1]$ and applying \eqref{eq_inflow} and \eqref{eq_IC}, the result can substituted into \eqref{recursive} to find the following formula for $P$: 
\begin{equation}
    P(t,x,z) = \int_0^t F^{\rm{in}}(s,x) \mathrm{d}s + \int_0^1 \rho_0(x,z) \mathrm{d}z - \int_0^z \rho(t,x,z) \mathrm{d}z.
\end{equation}
Alternatively, since 
 $\partial_z P(t,x,z) = - \rho(t,x,z)$, \eqref{eq_PDE2} can be rewritten as a closed Hamilton-Jacobi equation for $P$: 
 \begin{align} \label{eqP}
  \partial_t P  - \Phi( -\partial_z P, \partial_x P) = 0.
 \end{align}
The reformulation  of \eqref{eq_PDE2} in terms of \eqref{eqP} is the basis for the analysis and simulation performed in \cite{Barnard2019}.  In the current work, we proceed by analyzing \eqref{eq_model} directly in terms of $\rho$.

{
}
\section{Qualitative properties of the mathematical model} \label{sec:properties}

In this section, we investigate qualitative properties of \eqref{eq_model}.  We begin by simplifying the expression for $\Phi$ in \eqref{eq:flux1} using the auxiliary function
\begin{equation}\label{eq:W2}
    W_2(s) = \min \{ \overline{w}_2(s), \overline{w}_2(-s) \},
\end{equation}
where 
\begin{equation} \label{eq:w2bar}
    \overline{w}_2(s) = \min\{1,\max\{1+\eta s,0\}\};
\end{equation}
(see Figure \ref{fig:figfunctions}).
\begin{figure}[t!]
	\centering
	\begin{subfigure}[t]{0.49\textwidth}
		\centering
		\begin{tikzpicture}[scale=0.6]
		\begin{axis}[axis lines=middle,enlargelimits,xlabel=\Large{$s$},xtick={0.3},ytick={0.3},
		xticklabels={\Large{$\rho_*$}},
		yticklabels={\Large{$1$}},legend style={at={(0.5,0.5)},anchor=west}]
		\addplot [domain=-0.3:0,samples=200,very thick,black] {0};
		\addplot [domain=0:0.3,samples=200,very thick,black] {x};
		\addplot [domain=0.3:1,samples=200,very thick,black] {0.3*x^0};
		\addplot[thin,gray,dashed] coordinates {(0,0.3) (0.3,0.3)};
		\addplot[thin,gray,dashed] coordinates {(0.3,0.3) (0.3,0)};
		\addplot [domain=0:0.3,samples=200,very thick,black] {x};
		\addplot [domain=0.3:1,samples=200,very thick,black] {0.3*x^0};
		\addlegendentry{\Large{$w_1(u)$}}
		\end{axis}
		\end{tikzpicture}
		\caption{$w_1$; see \eqref{eq:w1}}
	\end{subfigure}%
	\hfill
	\begin{subfigure}[t]{0.49\textwidth}
		\centering
		\begin{tikzpicture}[scale=0.6]
		\begin{axis}[xlabel=\Large{$u$},ylabel=\Large{$v$}]
		\addplot3[surf,
		opacity=0.8,
		samples=50, samples y=50,
		domain=0:1,domain y=-1:1
		]
		{min(x,max(x+y,0),max(x-y,0))};
		\addlegendentry{\Large{$w_2(u,v,-v)$}}
		\end{axis}
		\end{tikzpicture}
		\caption{$w_2(u,v,-v)$ when $\eta=1$; see \eqref{eq:w2}}.
	\end{subfigure}
	\begin{subfigure}[t]{0.49\textwidth}
		\centering
		\begin{tikzpicture}[scale=0.6]
		\begin{axis}[axis lines=middle,enlargelimits,xlabel=\Large{$s$},xtick={-0.3},ytick={1},
		xticklabels={\Large{$-\frac{1}{\eta}$}},
		yticklabels={\Large{$1$}},legend style={at={(0.7,0.5)},anchor=west}]
		
		\addplot [domain=-0.3:0,samples=200,very thick,black] {3.33*x+1};
		\addplot [domain=-0.65:-0.3,samples=200,very thick,black] {0*x^0};
		\addplot [domain=0:0.35,samples=200,very thick,black] {1*x^0};
		\addlegendentry{\Large{$\overline{w}_2(s)$}}
		\end{axis}
		\end{tikzpicture}
		\caption{Function $\overline{w}_2$ defined by equation \eqref{eq:w2bar}.}
	\end{subfigure}
	\hfill
	\begin{subfigure}[t]{0.49\textwidth}
		\centering
		\begin{tikzpicture}[scale=0.6]
		\begin{axis}[axis lines=middle,enlargelimits,xlabel=\Large{$s$},xtick={-0.3,0.3},ytick={1},
		xticklabels={\Large{$-\frac{1}{\eta}$},\Large{$\frac{1}{\eta}$}},
		yticklabels={\Large{$1$}},legend style={at={(0.65,0.5)},anchor=west}]
		
		\addplot [domain=-0.3:0,samples=200,very thick,black] {3.33*x+1};
		\addplot [domain=-0.65:-0.3,samples=200,very thick,black] {0*x^0};
		\addplot [domain=0:0.3,samples=200,very thick,black] {-3.33*x+1};
		\addplot [domain=0.3:0.65,samples=200,very thick,black] {0*x^0};
		\addlegendentry{\Large{$W_2(s)$}}
		\end{axis}
		\end{tikzpicture}
		\caption{Function $W_2$ defined by equation \eqref{eq:W2}.}
	\end{subfigure}
	\caption{Shape of the throttling functions used in the definition of the flux $\Phi$ in ~\eqref{flux2}. } 
	\label{fig:figfunctions}
\end{figure}
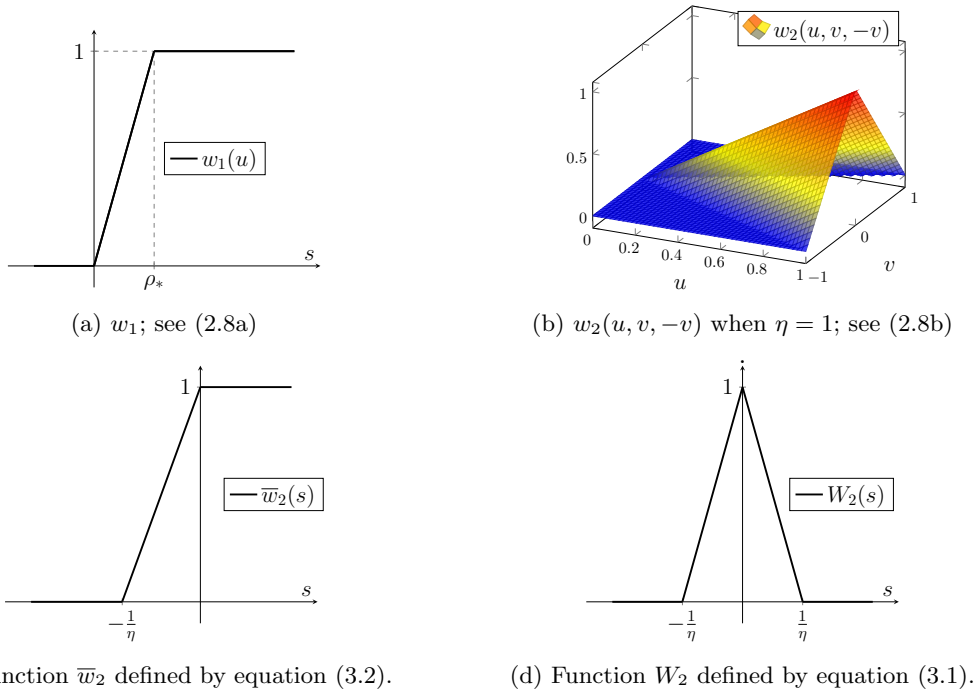
In terms of $w_1$ and $W_2$, $\Phi$ takes the form 
\begin{equation} \label{flux2}
\Phi(\rho, \sigma ) =  
\begin{cases}
\alpha \; w_1\left(\rho W_2\left(\displaystyle{\frac{\sigma }{\rho}}\right) \right) , & \rho \ne 0 \\
0 , & \rho = 0.
\end{cases}
\end{equation}

Next for simplicity, we artificially extend the domain in $z$ to $[0,\infty)$ and assume initial data with compact support.  This assumption allows us to ignore the contribution of the outflow in  \eqref{recursive}.  Under this assumption, the definition of $P$ is modified as follows:
\begin{align}\label{eqNEWP}
{\bf Assumption:} \quad P(t,x,z) &= \int_z^{\infty} \rho(t,x,y) \mathrm{d}y \quad \forall t\geq0, x \in \mathbb{T}, z \in [0,\infty).
\end{align}
Hence, 
in the following we consider the  model \eqref{eq_model} on the  extended domain 
\begin{equation}
    \mathbf{D}:= \R^+ \times \mathbb{T} \times \R^+
\end{equation} 
and introduce the auxiliary variable
\begin{equation}
\label{eq:def_sigma}
    \sigma (t,x,z)  := \partial_x \int_z^{\infty} \rho(t,x,y) \mathrm{d}y, \quad (t,x,z) \in \mathbf{D}.
\end{equation}
The resulting system for $(\rho,\sigma) \colon \mathbf{D}^2 \to \R^2$ is then 
\begin{subequations}
\label{model2} 
\begin{align}
\partial_t \rho + \partial_z \Phi(\rho,\sigma) =0, & \qquad (t,x,z) \in {\bf D}
, \label{model2_rho}  \\ 
\partial_x \rho + \partial_z \sigma = 0,  & \qquad  (t,x,z)\in {\bf D}, \label{model2_sigma} \\
\rho(0,x,z)=\rho_0(x,z), & \qquad  (x,z) \in \mathbb{T} \times \R^+,
\\ \rho(t,x,0)=\rho_b(t,x), \; \sigma(t,x,0)=\sigma_b(t,x), & \qquad (t,x) \in \R^+ \times \mathbb{T}.
\end{align} 
\end{subequations}
The relation of the quantities $(\rho,\sigma)$ to the 
original model \eqref{eq_model}, modified by the previous Assumption, is as follows:
\begin{subequations}
	\begin{align} 
	\sigma_b(t,x) &= \partial_x \int_0^\infty \rho(t,x,y) \mathrm{d}y, \label{eq:sigma_b_consistency}\\
	\Phi \big(\rho_b(t,x),\sigma_b(t,x) \big) &=F^{\rm{in}}(t,x), \\
	\lim_{z \to \infty} \sigma(t,x,z) &= 0. \label{eq:sigma_infty}
	\end{align}
\end{subequations}

\begin{definition}[Weak solution] \label{def:weaksol}
Given  initial 
data $\rho_0 \in L^\infty(  \mathbb{T} \times \R^+ )$ and boundary data $(\rho_b,\sigma_b)\in L^\infty( \R^+ \times  \mathbb{T})^2$, we call $(\rho,\sigma) \in [L^\infty({\bf D})]^2$ a weak solution of \eqref{model2}
if for all smooth compactly supported functions $\varphi:{\bf D}\to \R$ and $\psi:{\bf D}\to \R$, 
\begin{subequations}\label{weak sol 1}
\begin{align}
\int_{{\bf D}} \rho \; \partial_t \varphi 
    + \Phi(\rho,\sigma) \, \partial_z \varphi \; \mathrm{d}z \, \mathrm{d}x \, \mathrm{d}t \label{eq:weak_rho}
    + \int_{\mathbb{T} \times \R^+ }  \rho_0(x,z) \, \varphi(0,x,z) \; \mathrm{d}z \, \mathrm{d}x \nonumber \\ 
+\int_{\R^+ \times \mathbb{T} }  \Phi(\rho_b(t,x),\sigma_b(t,x)) \, \varphi(t,x,0) \; \mathrm{d}x \, \mathrm{d}t  &=0,  \\
 \int_{ {\bf D} }  \rho \; \partial_x \psi + \sigma \; \partial_z \psi \; \mathrm{d}z \mathrm{d}x \mathrm{d}t 
+ \int_{ \R^+ \times \mathbb{T} } \sigma_b(t,x)\, \psi(t,x,0) \; \mathrm{d}x \mathrm{d}t \label{eq:weak_sigma}
&= 0.
 \end{align}
\end{subequations}
\end{definition}

We do not know whether a solution in the sense of Definition  \ref{def:weaksol} exists for \eqref{model2} with general initial and boundary data or whether such a solution is unique and depends continuously on the data.%
\footnote{In \cite{Barnard2019}, a continuous vanishing viscosity solution is established using known results from the mathematical literature on Hamilton-Jacobi equations.  However, translating such a result to an $L^1$-theory for $\rho$ would require $P$ to be absolutely continuous.  While possible, this additional regularity has not yet been determined.}
However, for the particular initial and boundary data given in \eqref{eq:front1}, we establish the existence of special solutions below.

The weak formulation \eqref{weak sol 1} gives rise to a Rankine--Hugoniot jump condition \cite{Dafermos2005}.  Consider
the surface 
\begin{align}
S:=\{ (t,x,z) \subset {\bf D} : z=\zeta(t,x) \} 
\end{align}
for a differentiable function $\zeta \colon \R^+_0 \times \mathbb{T} \to \R^+_0$ and assume that the functions
$\rho$ and $\sigma$ satisfy \eqref{model2} point-wise in the interior of ${\bf D} \backslash S$.
Then standard arguments (see for example \cite[Section 11.1.1]{Evans2010}) can be used to establish that
\begin{equation}\label{RH}
\Phi(\rho_\ell,\sigma_\ell) - \Phi(\rho_r,\sigma_r) = \partial_t \zeta(t,x)  \;  \left( \rho_\ell-\rho_r \right),
\end{equation}
where 
\begin{gather}
    \rho_\ell = \rho(t,x,\zeta(t,x)^-), \qquad \sigma_\ell=\sigma(t,x,\zeta(t,x)^-), \\
    \rho_r = \rho(t,x,\zeta(t,x)^+), \qquad  \sigma_r=\sigma(t,x,\zeta(t,x)^+).
\end{gather}

\subsection{Special solutions for front propagation} \label{sec:weaksolutions}

In this section, we investigate solutions to \eqref{model2} that take the form of fronts;  see Figure \ref{fig1}. As discussed in the introduction, such solutions are important from the point of view of applications.

\begin{figure}
\centering
	\begin{tabular}{cc}
		\includegraphics[width=0.40\textwidth]{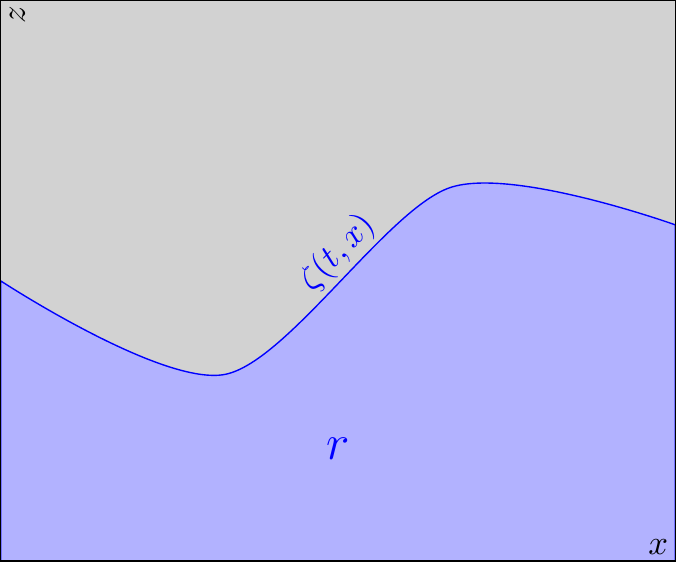} \quad & \quad 
		\includegraphics[width=0.40\textwidth]{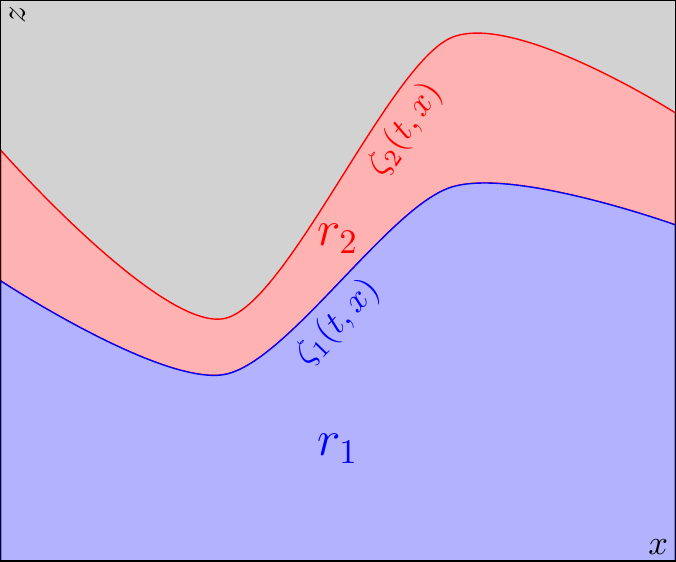} \\
		(a) & (b) 
	\end{tabular}
	\caption{Left: Contour plot of the density $\rho(t,x,z)$ for a fixed time $t.$  Processed data with constant density $r>0$ is depicted in blue up to a stage of completion  $z=\zeta(t,x).$ Zero data (grey) is prescribed for completion stages $z> \zeta(t,x)$, c.f. equation \eqref{eq:front1}. Right: Similar plot of a density with constant values $r_1$ and $r_2$ and regions separated by functions $\zeta_1(t,x)$ and $\zeta_2(t,x)$. }\label{fig1}
\end{figure}

\subsubsection{Existence of Solutions}
In the case of a single front, the initial and boundary conditions for $\rho$ take the form
\begin{equation}\label{eq:front_data_rho}
	\rho_0(x,z) = r  H( \zeta_0(x)-z) \quad \text{and} \quad \rho_b(t,x)=r,
\end{equation}
where $H$ denotes the Heaviside function and $r>0$ is a positive constant.  In addition, we enforce the consistency of $\sigma_b$ as prescribed in \eqref{eq:sigma_b_consistency}:
\begin{equation}\label{eq:front_data_sigma}
    	\sigma_b(t,x)= r \partial_x \zeta(t,x).
\end{equation}

\begin{theorem}\label{lem1}
Let $\rho_*,\eta>0$ be positive constants and let  $\alpha=\alpha(t,x) \in C^1(\R^+ \times \mathbb{T} )$ be a strictly positive function.  Given initial and boundary conditions of the form \eqref{eq:front_data_rho} and \eqref{eq:front_data_sigma}, where
\begin{equation}
\label{ass1}
    0 < r < \rho_*
\end{equation}
and $\zeta_0 \in C^1(\mathbb{T})$ is non-negative, 
assume that there is a $C^1$ solution $\zeta: \R^+_0\times \mathbb{T} \to \R^+$ that satisfies 
\begin{equation}
\label{movement of front}
\partial_t \zeta(t,x)  = \frac{\alpha(t,x)}{\rho_*}
W_2(\partial_x \zeta(t,x)), \quad \zeta(0,x) = \zeta_0(x).
\end{equation}
Then there pair ($\rho$, $\sigma$) given by
\begin{equation} \label{eq:front1}
	\rho(t,x,z) = r H( \zeta(t,x)-z), \quad  \sigma(t,x,z)= r H( \zeta(t,x)-z) \partial_x \zeta(t,x) 
\end{equation}
is a weak solution for \eqref{model2} in the sense of Definition~\ref{def:weaksol} with initial and boundary conditions given by \eqref{eq:front_data_rho} and \eqref{eq:front_data_sigma}.
\end{theorem}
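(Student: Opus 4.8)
The plan is to substitute the front ansatz \eqref{eq:front1} directly into the weak formulation \eqref{weak sol 1} and verify both identities by splitting the domain $\mathbf{D}$ along the front surface $S=\{z=\zeta(t,x)\}$ into the ``processed'' region $\mathbf{D}^- := \{(t,x,z)\in\mathbf{D}: z<\zeta(t,x)\}$ and $\mathbf{D}^+ := \{z>\zeta(t,x)\}$, and applying the divergence theorem on each piece.

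First I would establish the single algebraic fact that drives everything. On $\mathbf{D}^-$ the ansatz gives $\rho\equiv r$ and $\sigma\equiv r\,\partial_x\zeta(t,x)$, both independent of $z$, while on $\mathbf{D}^+$ one has $\rho\equiv\sigma\equiv 0$. Using formula \eqref{flux2}, assumption \eqref{ass1} that $0<r<\rho_*$, and the bound $0\le W_2\le 1$ (so that $r\,W_2(\partial_x\zeta)\in[0,\rho_*)$ lies on the linear branch of $w_1$), I obtain
$$\Phi\big(r,\,r\,\partial_x\zeta(t,x)\big)=\alpha\, w_1\!\big(r\,W_2(\partial_x\zeta)\big)=\frac{\alpha(t,x)\,r}{\rho_*}\,W_2\big(\partial_x\zeta(t,x)\big)=r\,\partial_t\zeta(t,x),$$
the last equality being exactly the front equation \eqref{movement of front}. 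Since $\rho_b=r$ and $\sigma_b=r\,\partial_x\zeta$ by \eqref{eq:front_data_sigma}, the same computation gives $\Phi(\rho_b,\sigma_b)=r\,\partial_t\zeta$ on $\{z=0\}$. This is precisely the Rankine--Hugoniot relation \eqref{RH} across $S$ with $(\rho_\ell,\sigma_\ell)=(r,r\,\partial_x\zeta)$, $(\rho_r,\sigma_r)=(0,0)$, and in the interior of $\mathbf{D}^\pm$ the equations \eqref{model2} hold trivially since all quantities are constant in $z$ on each side.

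Next, for \eqref{eq:weak_rho} I would note that the volume integral over $\mathbf{D}^+$ vanishes, and on $\mathbf{D}^-$ rewrite $\rho\,\partial_t\varphi+\Phi(\rho,\sigma)\,\partial_z\varphi = r\,\partial_t\varphi + r\,\partial_t\zeta\,\partial_z\varphi = \nabla_{(t,x,z)}\!\cdot\big(r\varphi,\ 0,\ r\,\partial_t\zeta\,\varphi\big)$, where the last equality uses $\partial_z(\partial_t\zeta)=0$. Applying the divergence theorem on $\mathbf{D}^-$ (intersected with a large box containing $\operatorname{supp}\varphi$, so contributions from that box's boundary vanish, and with no contribution in the $x$-direction since $\mathbb{T}$ has no boundary), only the faces $\{t=0\}$, $\{z=0\}$, and $S$ survive. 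The face $\{t=0,\,0<z<\zeta_0(x)\}$, with outward normal $(-1,0,0)$, contributes $-\int_{\mathbb{T}\times\R^+}\rho_0\,\varphi(0,\cdot)$, cancelling the initial term; the face $\{z=0\}$, with outward normal $(0,0,-1)$, contributes $-\int_{\R^+\times\mathbb{T}} r\,\partial_t\zeta\,\varphi(t,x,0) = -\int_{\R^+\times\mathbb{T}}\Phi(\rho_b,\sigma_b)\,\varphi(t,x,0)$, cancelling the inflow term; and on $S$ the flux vector $(r\varphi,0,r\,\partial_t\zeta\,\varphi)$ is orthogonal to the unnormalized normal $(-\partial_t\zeta,-\partial_x\zeta,1)$ of $S$, so $S$ contributes nothing. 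Hence \eqref{eq:weak_rho} reduces to $0=0$. The identity \eqref{eq:weak_sigma} is handled identically with the field $(0,\ r\psi,\ r\,\partial_x\zeta\,\psi)$, whose divergence equals $r\,\partial_x\psi+r\,\partial_x\zeta\,\partial_z\psi=\rho\,\partial_x\psi+\sigma\,\partial_z\psi$ on $\mathbf{D}^-$: the $\{t=0\}$ face contributes nothing, the $\{z=0\}$ face contributes $-\int\sigma_b\,\psi(t,x,0)$ matching the boundary term, and the field is again orthogonal to the normal of $S$.

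The computations are essentially bookkeeping; the only point needing care is the divergence-theorem step on the possibly $z$-unbounded region $\mathbf{D}^-$, which is dispatched by the compact support of the test functions, together with keeping the normal orientations and the sign of each cancellation consistent. The conceptual heart is the first step: the propagation law \eqref{movement of front} is exactly the condition (equivalently, the Rankine--Hugoniot condition) that makes the flux fields tangent to the front surface $S$, so that no spurious surface terms appear, while the constraint $0<r<\rho_*$ keeps the state on the linear branch of $w_1$ so that $\Phi$ reduces to the simple multiple of $W_2$ used above.
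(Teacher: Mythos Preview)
Your proof is correct and follows essentially the same route as the paper's: split $\mathbf{D}$ along the front, evaluate $\Phi$ on each side via the assumption $r<\rho_*$ (so that $w_1$ is on its linear branch), and integrate by parts so that the surface contribution on $S$ vanishes precisely by the front equation \eqref{movement of front}. The only cosmetic difference is that the paper handles \eqref{eq:weak_sigma} by a direct Leibniz-rule computation (recognizing the integrand as $\partial_x\!\int_0^{\zeta}\psi\,dz$) and treats the initial/boundary terms separately from the interior, whereas you package both equations uniformly via the divergence theorem in $(t,x,z)$ and pick up the initial and inflow terms from the faces $\{t=0\}$ and $\{z=0\}$ in the same pass.
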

Before proving this result, some remarks are in order. 
\begin{enumerate}
    \item The condition in \eqref{ass1} guarantees that $\frac{r}{\rho_*} W_2( \frac{\sigma}{r} ) < 1$. In this case the flux function $\Phi$ (c.f. \eqref{flux2}) is given by the simplified formula 
    \begin{equation} \label{note:phi}
        \Phi(\rho,\sigma) = 
        \begin{dcases}
         \dfrac{\alpha \rho}{\rho_*}\ W_2\left(\dfrac{\sigma}\rho\right), &
            \rho \ne 0, \\
        0, & \rho = 0.
        \end{dcases}
    \end{equation}
    This formula and the Rankine--Hugoniot condition in \eqref{RH} together form the key components of the existence proof.
    \item While the condition $r<\rho_*$ is sufficient to obtain \eqref{note:phi}, it is not necessary.  Indeed $W_2(\sigma/\rho)=0$ for $\sigma$ sufficiently large; in particular, \eqref{note:phi} holds at any $(x,t)$ such that $\partial_x \zeta(t,x)> \eta^{-1}$.
    \item It is currently open as to whether equation \eqref{movement of front} has a  $C^1$ solution. However, since $\alpha, \rho_*$, and $W_2$ are non-negative, it is clear that any such solution will be non-decreasing.  This fact is consistent with the notion that data is always processed toward completion.
    \item A result similar to Theorem \ref{lem1} can be obtained for initial data separated by a finite number of non-intersecting fronts. Consider for example the case of two fronts (see Figure \ref{fig1}-b):
    \begin{equation} \label{eq:front2_ic}
	    \rho_0(x,z) =r_1 H \big( \zeta_{1,0}(x) -z \big)
	        + r_2 \left[H \big(\zeta_{2,0}(x) -z \big)   
	            - H  \big(\zeta_{1,0}(x) -z \big)\right]
    \end{equation}
    with positive constants $r_1 \not = r_2$ and $r_1<\rho_*$ and $r_2<\rho_*$.  If $0<\zeta_{1,0}(x)<\zeta_{2,0}(x)$ for all $x \in \mathbb{T}$, then there is a weak solution $(\rho,\sigma)$ in the sense of Definition \ref{def:weaksol} of the form  
    \begin{subequations} \label{eq:front2}
        \begin{align} 
	    \rho(t,x,z) &= (r_1 - r_2) H \big( \zeta_1(t,x) -z \big)
	        + r_2 H \big(\zeta_2(t,x) -z \big)   
	    \label{eq:front2_rho}\\
	    \sigma(t,x,z) &=  
	        (r_1 -r_2 ) \partial_x \zeta_1(t,x) H \big( \zeta_1(t,x) -z \big) \\
	        & \qquad + r_2  \partial_x \zeta_2(t,x) H \big(\zeta_2(t,x) -z \big),
	    \label{eq:front2_sigma}
    \end{align}
    \end{subequations}
    provided $\zeta_1(0,x)= \zeta_{1,0}(x)$, $\zeta_2(0,x)= \zeta_{2,0}(x)$, and
    \begin{subequations}
    \begin{align}
	    \partial_t \zeta_1(t,x) 
	    &= \frac{\alpha(t,x)}{\rho_* (r_2-r_1)} 
	        \Big[ r_2 W_2 \Big(\partial_x \zeta_2(t,x) \Big) 
	        - r_1 W_2\Big( \Big(\frac{r_1 - r_2}{r_1}\Big)\partial_x \zeta_1(t,x) + \frac{r_2}{r_1} \partial_x \zeta_2(t,x)  \Big) \Big],\\
	\partial_t \zeta_2(t,x) &= \frac{\alpha(t,x)}{\rho_*} W_2\Big(\partial_x \zeta_2(t,x) \Big).
\end{align}
\end{subequations}
\end{enumerate}  

\begin{proof}[Proof of Theorem \ref{lem1}]
With the $C^1$ assumption on $\zeta$,
$(\rho,\sigma)$ given by equation \eqref{eq:front1} clearly belongs to
$L^\infty({\mathbf{D}})^2$. 
By construction the given solution \eqref{eq:front1} fulfills boundary and initial 
condition  pointwise and hence also in weak form. It therefore remains to verify that $(\rho,\sigma)$ is a weak solution in the 
interior of $\mathbf{D}.$ 
\par
We first verify that $(\rho,\sigma)$ fulfills \eqref{eq:weak_sigma}. For $\psi$ smooth and compactly supported in the interior of $\mathbf{D}$,  we need to show that:
\begin{align}
	\int_{\bf D} \big[ r H(\zeta(t,x)-z) \partial_x \psi(t,x,z) + r H(\zeta(t,x)-z) \partial_x \zeta(t,x) \partial_z \psi(t,x,z) \big] \mathrm{d}z \mathrm{d}x \mathrm{d}t = 0.
\end{align}
By  definition of the Heaviside function, 
\begin{multline}
\int_{\bf D} \big[ r H(\zeta(t,x)-z) \partial_x \psi(t,x,z) + r H(\zeta(t,x)-z) \partial_x \zeta(t,x) \partial_z \psi(t,x,z) \big] \mathrm{d}z \mathrm{d}x \mathrm{d}t \\
 = r \int_{ \R^+_0\times \mathbb{T} } \int_0^{\zeta(t,x)} \big[ \partial_x \psi(t,x,z) + \partial_x \zeta(t,x) \partial_z \psi(t,x,z) \big] \mathrm{d}z \mathrm{d}x \mathrm{d}t  \\
    =  r \int_{ \R^+_0 \times \mathbb{T} } \left[ \int_0^{\zeta(t,x)} \partial_x \psi(t, x,z) \mathrm{d}z + \partial_x \zeta(t,x) \psi(t,x,\zeta(t,x))  \right]  \mathrm{d}x \mathrm{d}t \\
    = r \int_{\R^+_0 \times \mathbb{T} } \frac{\partial}{\partial x} \left( \int_0^{\zeta(t,x)} \psi(t, x,z) dz \right) \mathrm{d}x \mathrm{d}t = 0,
\end{multline}
 where the integral in the last line vanishes due to periodicity of the domain with respect to $x.$

We next verify that $(\rho,\sigma)$ fulfills \eqref{eq:weak_rho}. Let $S=\{ (t,x,z)\subset \mathbf{D}: z= \zeta(t,x) \} \subset \mathbf{D}$ be the surface of the front described by $\zeta$, and let $\mathbf{S}_+:=\{ (t,x,z) \subset \mathbf{D}: z>\zeta(t,x) \}$
and  $\mathbf{S}_-:=\{ (t,x,z) \subset \mathbf{D}: z<\zeta(t,x) \}.$ For $z<\zeta(t,x)$ the explicit form of $(\rho,\sigma)$ implies the following relation 
\begin{align}
	\frac{\sigma(t,x,z)}{\rho(t,x,z)}  = \partial_x \zeta(t,x).
\end{align}
Hence according to \eqref{note:phi},
\begin{align}
\Phi(\rho(t,x,z),\sigma(t,x,z)) =
\begin{dcases}
\frac{\alpha(t,x) r}{\rho_*} W_2(\partial_x \zeta(t,x)) &  (t,x,z) \in \mathbf{S}_- \\
0 & (t,x,z) \in \mathbf{S}_+.
\end{dcases}
\end{align}
Let $\varphi$ be any smooth, compactly supported function in the interior of $\mathbf{D}.$  In $\mathbf{S}_{\pm}$, $\Phi(\rho,\sigma)$ and $\rho$ are constant, and in $\mathbf{S}_{+}$, they both vanish.  Hence, 
\begin{multline}
    \int_{\mathbf{D}} \rho \; \partial_t \varphi 
    + \Phi(\rho,\sigma) \, \partial_z \varphi \; \mathrm{d}z \, \mathrm{d}x \, \mathrm{d}t = 
    \int_{\mathbf{S}_-} r \; \partial_t \varphi 
    +  \frac{ \alpha r }{\rho_*} W_2(\partial_x \zeta) \, \partial_z \varphi \; \mathrm{d}z \, \mathrm{d}x \, \mathrm{d}t  \\
= \int_{\mathbb{T}} \int_{ \{ (t,z): z=\zeta(t,x) \} } 
\varphi \left( - r  \partial_t \zeta +
\frac{ \alpha r }{\rho_*} W_2(\partial_x \zeta)   \right) \frac{1}{
\| (1,\partial_t \zeta) \| } \mathrm{d} A(t,z) \mathrm{d} x,
\end{multline}
where $\mathrm{d} A(t,z)$ denotes the surface measure 
on the curve $\{ (t,z): z=\zeta(t,x) \}$. The final expression above vanishes  due to the definition of $\zeta$ given in \eqref{movement of front}. This finishes the proof.
\end{proof}

\subsubsection{Properties of fronts and their associated weak solutions } \label{sec:discussion}
It is the equation for the front $\zeta$ in \eqref{movement of front} that essentially determines the behavior of the special solutions \eqref{eq:front1}. We assume again that $0 < r<\rho_*$. With the definition of $W_2 $ in \eqref{eq:W2}, this equation takes the form

\begin{equation}\label{eq:front_equation} \partial_t \zeta(t,x) = \frac{\alpha(t,x)}{\rho_*} \times
\begin{dcases}
1 -  \eta \, | \partial_x\zeta(t,x)|,  
    &    | \partial_x\zeta(t,x)| < \eta^{-1}, \\
0,  &  | \partial_x\zeta(t,x)| \geq \eta^{-1}.
\end{dcases}
\end{equation}
In general, we do not know if there exists a solution to  \eqref{eq:front_equation},  due to the discontinuous flux. 
However, we may consider particular solutions related to special  initial conditions $\zeta_0(x).$ Those special solutions are used as test cases for numerical simulations in the next section.

\begin{itemize}

\item 
If for all $(t,x) \in \R^+ \times \mathbb{T}$,
$|\partial_x \zeta(t,x)| < \eta^{-1}$ and $\alpha(t,x)=\alpha$ is constant, then formally $\partial_x \zeta(t,x)$ fulfills the conservation law
\begin{equation}\label{PDEform-x}
\partial_t (\partial_x \zeta)(t,x) +  \frac{\alpha\eta}{\rho_*} \partial_x  | (\partial_x \zeta)(t,x) | = 0, \quad  (\partial_x \zeta)(0,x)=\partial_x \zeta_0(x)
\end{equation}
with flux function $f(u)=C |u|$ and $C= \frac{\alpha\eta}{\rho_*}.$ It is known that there exists a weak (entropic) solution to \eqref{PDEform-x} as long as $\partial_x \zeta_0(x)$ is in $L^\infty$, has locally bounded variation, and is continuous from the left \cite[Proposition 3.1]{Dafermos1972}.
Moreover, if $\partial_x \zeta_0(x)$ is piecewise constant, this solution is given by piecewise constant states separated by a finite number of traveling discontinuities \cite[Lemma 3.2]{Dafermos1972}.  We investigate such piece-wise solutions in Examples 2 and 3 in the numerical results of Section \ref{sec:numerics}.

\item  If  $\alpha$ is constant, and if $0 \leq \partial_x \zeta_0(x) < \eta^{-1}$ locally in $x$, then for sufficiently small $t$,
\begin{equation} \label{eq:solThrottled1}
\zeta(t,x) 
= \zeta_0 \left(x - \eta \frac{\alpha}{\rho_*}
 t \right) + \frac{\alpha}{\rho_*}  t. 
\end{equation}
Similarly if $-\eta^{-1} < \partial_x \zeta_0(x) \leq 0$ locally, then for sufficiently small $t$,
\begin{equation} \label{eq:solThrottled2}
    \zeta(t,x)=\zeta_0\left(x + \eta \; \frac{\alpha}{\rho_*} \; t\right) + \frac{\alpha}{\rho_*} \; t.
\end{equation}
We use these formulas to make comparisons with numerical results in Examples 2-4 in Section~\ref{sec:numerics}.
\item If locally $|\partial_x \zeta_0(x)| \geq \eta^{-1}$, then $\zeta(t,x) = \zeta_0(x)$ for $t$ sufficiently small.
In this case the model predicts that all processors are stalled.  However, the condition on $\zeta_0$ cannot hold globally due to the periodicity assumption in $x$.  We explore the behavior of the model for this scenario in Example 5 in Section~\ref{sec:numerics}.

\end{itemize}

\section{Numerical scheme and computational simulations } \label{sec:numerics}

In this section, we  construct a numerical discretization for the system.  Results of numerical simulations using this discretization are presented
to illustrate the behavior of special front-type solutions discussed in the previous section.   

\subsection{Numerical Discretization} \label{sec:scheme}

Our strategy for approximating \eqref{model2} is based on the relaxed system of equations 
\begin{subequations}\label{model2equivalent}
\begin{align}
\partial_t \rho + \partial_z \Phi(\rho,\sigma) &= 0, \label{model2equivalent_rho}\\
\varepsilon \partial_t \sigma + \partial_x \rho + \partial_z \sigma &= 0, \label{model2equivalent_sigma}
\end{align}
\end{subequations}
where $\epsilon > 0$ is a small parameter.  Formally, \eqref{model2_rho} and \eqref{model2_sigma} are obtained from \eqref{model2equivalent} in the limit as $\epsilon \to 0$. This limit is considered later after a suitable discretization of \eqref{model2equivalent}. 

Before discretizing \eqref{model2equivalent}, we investigate hyperbolicity in the simplified case that
$\alpha(t,x)  > 0$ is constant.%
\footnote{Hyperbolicity will not change if $\alpha$ is space or time--dependent, provided it is sufficiently smooth.} In this case, \eqref{model2equivalent} takes the
non-conservative form 
\begin{equation} \label{eq:nonconservative_form}
\partial_t U + B^x(U) \partial_x U + B^z(U) \partial_z U = 0 ,
\end{equation}
where $U=(\rho,\sigma)$ and 
\begin{equation}
	B^x(U) = \begin{pmatrix}
    	0 & 0 \\[2ex] \displaystyle{\frac{1}{\epsilon}} & 0
    \end{pmatrix}, \quad
    B^z(U) = \begin{pmatrix}
    	\partial_\rho \Phi & \partial_\sigma \Phi \\[2ex] 0 & \displaystyle{\frac{1}{\epsilon}}
    \end{pmatrix}.
\end{equation}
Under the assumption \eqref{ass1} and provided that $\rho>0$, 
the simplified form of $\Phi$ in \eqref{note:phi} implies that
\begin{equation} 
\partial_\rho \Phi(\rho,\sigma) = \frac{\alpha}{\rho_*} \big( W_2(s) - s W^\prime_2(s) \big), \quad \partial_\sigma \Phi(\rho,\sigma) = \frac{\alpha}{\rho_*} W^\prime_2(s),
\end{equation}
where  $s=\sigma/\rho$. 

 The system \eqref{eq:nonconservative_form} is hyperbolic if for any $\xi=(\xi_1, \xi_2) \in \mathbb{R}^2$ the matrix $B(U, \xi) = \xi_1 B^x(U) + \xi_2 B^z(U)$ is diagonalizable in the field of real numbers \cite{Evans2010}.   Because the function $W_2$ is not smooth, $B(U,\xi)$ is not differentiable when $s=0$ or $s = \pm 1/\eta$.  However, away from these points,   
\begin{align}
B(U,\xi)= \left\{  \begin{array}{ll} 
\begin{pmatrix}
	0 & 0 \\[2ex] \displaystyle{\frac{\xi_1}{\epsilon}} & \displaystyle{\frac{\xi_2}{\epsilon}}
	\end{pmatrix}, & |s| > \dfrac{1}\eta , \\[5ex]] 
 \begin{pmatrix}
	\xi_2 \displaystyle{\frac{\alpha}{\rho_*}} & -\xi_2  \sign(s) \eta \dfrac{\alpha}{\rho_*} \\[2ex] \displaystyle{\frac{\xi_1}{\epsilon}} & \displaystyle{\frac{\xi_2}{\epsilon}}
	\end{pmatrix}, & 0< |s| < \dfrac{1}\eta	,	
\end{array} \right.
\end{align}
and the eigenvalues of $B(U,\xi)$ are 
\begin{align}
(\lambda_1,\lambda_2) = \left\{  \begin{array}{ll} 
\left(0, \dfrac{\xi_2}\epsilon \right),
& |s| > \dfrac{1}\eta ,\\[5ex] 
(\lambda_1^*, \lambda_2^*), & 0< |s| < \dfrac{1}\eta,	
\end{array}	\right.
\end{align}
where $(\lambda_1^*,\lambda_2^*)$ are the  roots of the characteristic polynomial
\begin{equation}\label{eq:char_poly}
	p(\lambda)= 
	    \lambda^2 
	    - \xi_2 \left( \frac{\alpha}{\rho_*} + \frac{1}{\epsilon} \right) \lambda 
	    + \frac{\xi_2 \alpha }{\epsilon \rho_*} \left( \xi_2  + \sign(s)  \xi_1 \eta \right).
\end{equation}
For $|s| > \eta^{-1}$, the system is strictly hyperbolic.   For $ 0 < |s| < \eta^{-1}$, the  polynomial $p$ in \eqref{eq:char_poly} has discriminant 
\begin{equation}
    d_\epsilon(\xi_1,\xi_2) = \left(\frac{\xi_2}{\epsilon \rho_*}\right)^2 
    \left[ (\epsilon\alpha - \rho_*)^2 - 4 \sign(s) \epsilon \alpha \rho_* \eta \frac{\xi_1}{\xi_2}\right]
\end{equation}
which, for $\xi_2 \ne 0$, is negative (so that $\lambda_1^*$ and $\lambda_2^*$ have non-zero imaginary component) if and only if
\begin{equation}
    \sign(s)\frac{\xi_1}{\xi_2} >  \frac{(\epsilon\alpha - \rho_*)^2}{4 \epsilon \alpha \rho_* \eta}.
\end{equation}
For any $\epsilon >0$, the non-hyperbolic region $\{ (\xi_1,\xi_2) : d_\epsilon(\xi_1,\xi_2)  < 0 \}$ is nontrivial; 
see Figure \ref{fig:hyp}.  However, for any fixed $(\xi_1,\xi_2)$, there exists an $\epsilon^*(\xi_1,\xi_2)$ 
small enough that $d_\epsilon(\xi_1,\xi_2)  \geq 0$ for all $\epsilon < \epsilon^*(\xi_1,\xi_2)$.  Hence as $\epsilon \to 0^+$, the non-hyperbolic region vanishes.  The numerical method below is constructed by discretizing the relaxation system in \eqref{model2equivalent} and then formally setting $\epsilon=0$.  This fact partially justifies the use of \eqref{model2equivalent} even though it is not everywhere hyperbolic when $\epsilon > 0$.
\begin{figure}\centering
	\begin{tabular}{cc}
		\includegraphics[width=0.45\textwidth]{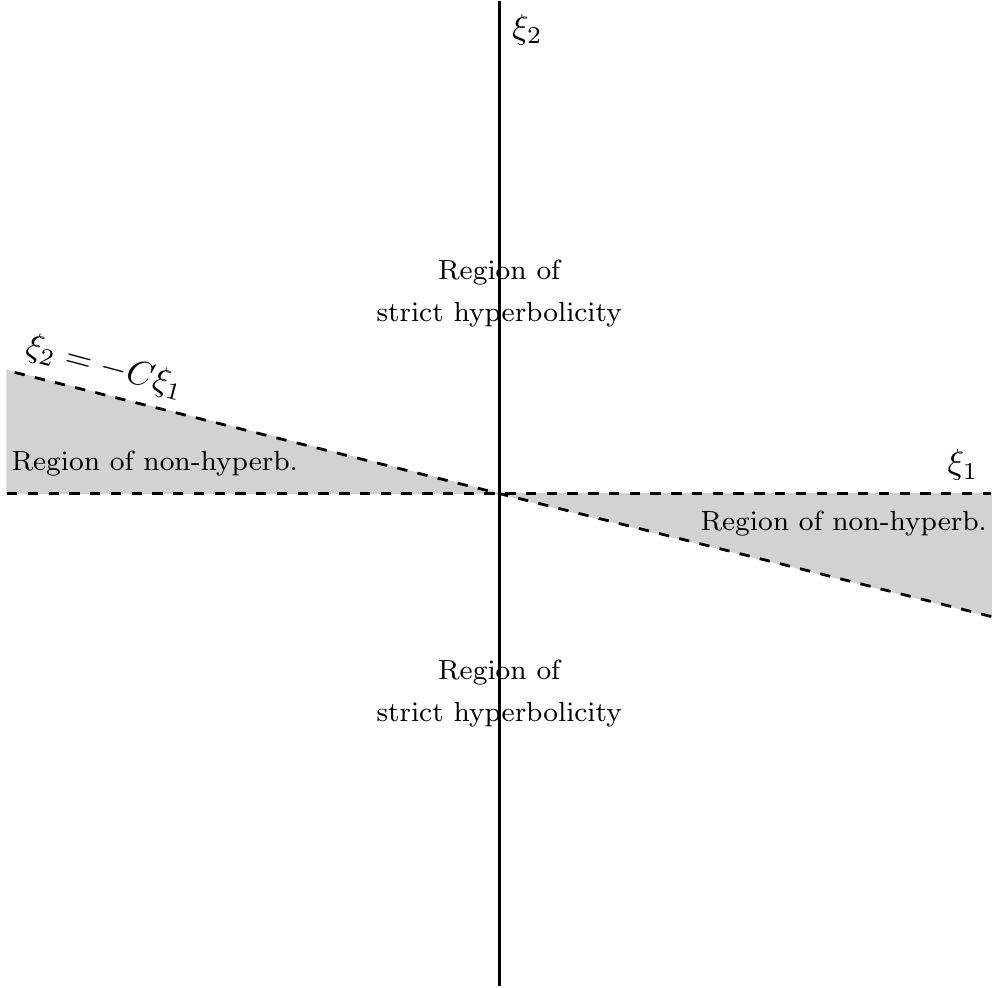} &
		\includegraphics[width=0.45\textwidth]{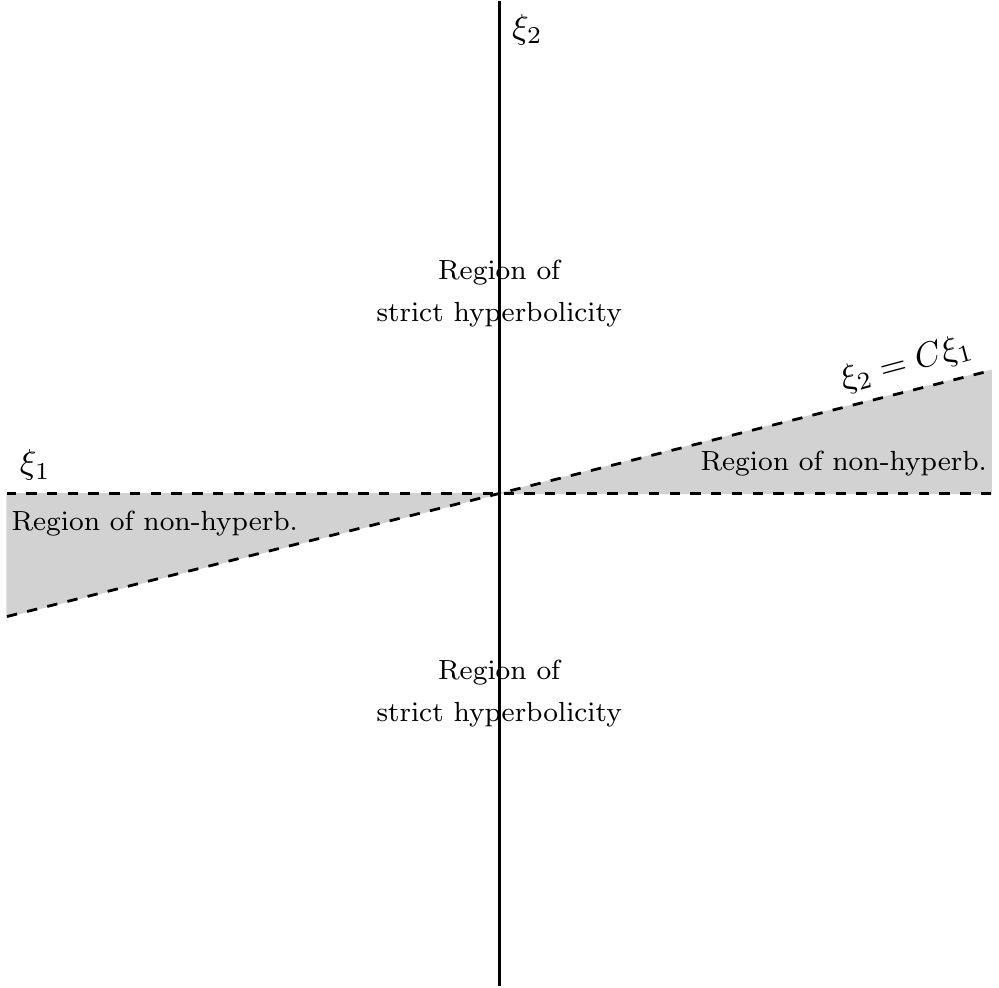} \\
		(a) Case: $-\frac{1}{\eta} < s < 0$ & (b) Case: $0 < s < \frac{1}{\eta}$
	\end{tabular}
	\caption{
	Domain of hyperbolicity of system~\eqref{model2equivalent}. In both cases, the non-hyperbolic region (gray) is bounded by the $\xi_1$-axis and by $\xi_2= \pm C \xi_1$ with slope $C=\frac{4 \epsilon \alpha \rho_* \eta}{\left(\epsilon \alpha -\rho_*\right)^2} \to 0$ as $\epsilon \to 0^+$.}\label{fig:hyp}
\end{figure} 

To derive a numerical scheme for \eqref{model2equivalent}, we first discretize in time using a first-order method. We are eventually interested in the $\epsilon \to 0$ 
limit of the fully discretized scheme; hence the equation for $\rho$ in \eqref{model2equivalent_rho} is treated explicitly, but the equation for $\sigma$ in \eqref{model2equivalent_sigma} is treated implicitly. Let
$\rho^n(x,z) \simeq \rho(n \Delta t,x,z)$ and $\sigma^n(x,z) \simeq \sigma(n \Delta t,x,z)$ be the discrete
approximations of $\rho$ and $\sigma$, respectively, at time $n \Delta t.$ We set
\begin{subequations}\label{model2equivalent_time_disc}
	\begin{align}
\rho^{n+1} &=  \rho^{n} -  \Delta t \, \partial_z \Phi(\rho^n,\sigma^n),  \label{model2equivalent_time_disc_rho} \\
	\sigma^{n+1} &=  \sigma^{n}  -  \frac{\Delta t}{\varepsilon} \left( \partial_x \rho^{n+1} + \partial_z \sigma^{n+1} \right) \label{model2equivalent_time_disc_sigma}.
\end{align}
\end{subequations}

To  discretize \eqref{model2equivalent_time_disc} in $x$ and $z$, we introduce a bounded  computational domain $(x,z) \in [0,1)\times [0,1]$, which is divided into $N^x\times N^z$  uniform cells of size $\Delta x \times \Delta z$. The cell  centers are denoted by  $(x_i,z_j)$, for $i \in \{1,\dots,N^x\}$ and $j \in \{1,\dots,N^z\}$; and the cell edges are denoted by $x_{i+\frac{1}2} = (i + \frac{1}2) \Delta x$  and $z_{j+\frac{1}2}= (j + \frac{1}2) \Delta z$, respectively, for $i \in \{0,\dots,N^x\}$ and $j \in \{0,\dots,N^z\}$.  The approximate cell averages of $\rho^n$ and $\sigma^n$ on the 
 cell centered at $(x_i,z_j)$ are denoted by
 \begin{align}
 \overline{R}^n_{ij}  \simeq \frac{1}{\Delta x \Delta z} \int_{x_{i-1/2}}^{x_{i+1/2}} \int_{z_{j-1/2}}^{z_{j+1/2}} \rho^n(x,z) \mathrm{d}x \mathrm{d}z 
 	\intertext{and}
 \overline{S}^n_{ij} \simeq \frac{1}{\Delta x \Delta z} \int_{x_{i-1/2}}^{x_{i+1/2}} \int_{z_{j-1/2}}^{z_{j+1/2}} \sigma^n(x,z) \mathrm{d}x \mathrm{d}z,
 \end{align}
 respectively. 
 
 We use  a Lax-Friedrichs approximation of  $\partial_z \Phi$ in  \eqref{model2equivalent_time_disc_rho} and in \eqref{model2equivalent_time_disc_sigma} a centered discretization for $\partial_x \rho$ and a one-sided discretization of $\partial_z \sigma$.  The resulting fully discrete scheme is 
\begin{subequations}
\label{model2equivalent_fully_discrete}
	\begin{align}
		\overline{R}_{ij}^{n+1} &= \overline{R}_{ij}^{n}  - \frac{\Delta t}{\Delta z} \left(F_{i,j+1/2}^n - F_{i,j-1/2}^n \right)+ \frac14  \left(\overline{R}_{i+1,j}^{n}-2\overline{R}_{ij}^{n}+\overline{R}_{i-1,j}^{n}\right) ,\\
  \overline{S}_{ij}^{n+1} &= \overline{S}_{ij}^n - \frac{\Delta t}{2\epsilon\Delta x}
  \left(\overline{R}_{i+1,j}^{n+1}-\overline{R}_{i-1,j}^{n+1}\right) - \frac{\Delta t}{\epsilon \Delta z}
  \left(\overline{S}_{i,j+1}^{n+1} - \overline{S}_{ij}^{n+1}\right) , \label{eq:discr sigma 2}
	\end{align}
\end{subequations}
where
 
\begin{equation}
F_{i,j+\frac{1}2}^n = \frac12\left( \Phi(\overline{R}_{i,j+1}^n,\overline{S}_{i,j+1}^n)+\Phi(\overline{R}_{ij}^n,\overline{S}_{ij}^n) - a (\overline{R}_{i,j+1}^n-\overline{R}_{ij}^n)\right).
\end{equation}

The monotonicity of the numerical flux $F_{i,j+\frac{1}2}^n $ is guaranteed provided that 
\begin{align}\label{eq:CFL}
\max_{(R,S) }  \Bigl\| \partial_\rho \Phi(R,S) \Bigr\| =: a \leq \frac{\Delta z}{\Delta t}. 
\end{align} 
This condition motivates a dynamic choice of $\Delta t$: At each time level $t^n$ we set  $\Delta t=\frac{\Delta z}{a^n}
$ for 
\begin{align}
  a^n = \max\limits_{(i,j): 1\leq i\leq N^x, 1 \leq j\leq N^z } 
\Bigl\| \partial_\rho \Phi(\overline{R}^n_{ij}, \overline{S}^n_{ij})   \Bigr\| .  
\end{align}

 The right-biased stencil in the discretization of $\partial_z \sigma$  in equation \eqref{eq:discr sigma 2} warrants some discussion.   Indeed, given that the boundary condition for $\sigma$ is given at $z=0$, it seems more natural to use a left-biased stencil.  However, such an approach does not allow the boundary condition to be computed implicitly via the consistency relation in \eqref{eq:sigma_b_consistency}.  However, if $\rho$ has compact support, then \eqref{eq:sigma_infty} holds.  As long as the support of the solution does not reach the  boundary of the computational domain at $z=1$, this condition can be enforced there, in an implicit fashion.  This approach is consistent with the fact that $\sigma$ at a given $z_*$ is determined entirely by $\rho$ at values of $z > z_*$.  Moreover, numerical calculations confirm that enforcing the boundary condition in this way yields stable results.

The final scheme used in Section~\ref{sec:simulations} is  given by the formal limit of \eqref{model2equivalent_fully_discrete} in the limit $\epsilon \to 0$:
\begin{subequations}
\label{eq: fully_discrete_scheme}
\begin{align}
\overline{R}_{ij}^{n+1} - \overline{R}_{ij}^{n} &= - \frac{\Delta t}{\Delta z} \left[ F_{i,j+1/2}^n - F_{i,j-1/2}^n \right] + \frac14 \left(\overline{R}_{i+1,j}^{n}-2\overline{R}_{ij}^{n}+\overline{R}_{i-1,j}^{n}\right), \\
\overline{S}_{ij}^{n+1} &= \frac{\Delta z}{2\Delta x} \left(\overline{R}_{i+1,j}^{n+1} - \overline{R}_{i-1,j}^{n+1} \right) + \overline{S}_{i,j+1}^{n+1}.
\end{align}
\end{subequations}
This scheme above must be accompanied by boundary and initial conditions. The initial values  $\overline{R}_{ij}^0$ are
obtained by integration of the initial data $\rho_0$:
\begin{align}
\overline{R}_{ij}^0 = \frac{1}{\Delta x \Delta z} \int_{x_{i-1/2}}^{x_{i+1/2}} \int_{z_{j-1/2}}^{z_{j+1/2}} \rho_0(x,y) \mathrm{d}x \mathrm{d}z.
\end{align} 
The cell averages $\overline{S}_{ij}^0$ are obtained by applying a midpoint rule to the consistency relation \eqref{eq:sigma_b_consistency}: 
\begin{align}
\overline{S}_{ij}^0 &= \frac{1}{\Delta x} \sum\limits_{\ell=j}^{N^z}  \rho_0(x_{i+1/2},z_j) - \rho_0(x_{i-1/2},z_j).
\end{align}
For $\overline{S}_{ij}^n$, zero boundary conditions are prescribed at $j=N^z$ consistent with the  right-based stencil in the discretization of $\partial_z \sigma$: 
\begin{align}
\overline{S}_{i,N^z}^n &= 0.
\end{align}
 For $\overline{R}_{ij}^n$,  boundary conditions at $z=0$ are computed from  $\rho_{b}(t).$ For the special solutions discussed in the previous section $\rho_{b}=r$ and hence  
 \begin{align}
 \overline{R}_{i, 1}^n &= r.
 \end{align}
Due to the central discretization of $\partial_z \rho$, boundary conditions for $\overline{R}^n_{ij}$ at $j=N^z$ 
need to be prescribed.  Since in the simulation no data reaches this boundary we implement
 Neumann boundary conditions 
 \begin{align}
\overline{R}_{i, N^z}^n &=  \overline{R}_{i, N^z-1}^n. 
\end{align}

\subsection{Simulation results } \label{sec:simulations}

\paragraph{Example 1: validation with the microscopic model.}

We consider a test problem that has been used to to validate the macroscopic model \eqref{model2} and the numerical method \eqref{eq: fully_discrete_scheme} by comparing with a simulation of the microscopic model introduced in  \cite{Barnard2019}.  For this problem, $\rho_*=1.0$ and $\eta=1.0$.  The initial and boundary conditions are given by 
\begin{equation}
\rho_0(x,z) = 1.5\left(\sin( 2\pi z) \right)^6 \,\chi_{[0,0.5]}(z),
 \quad \rho_{b}(t,x)=0, 
 \quad \sigma_{b}(t,x)=0,
\end{equation}
and the processor speed is
\begin{equation}
    \alpha(x) = 1-0.4(\sin(\pi x))^2.
\end{equation}
Both models are simulated up to a final time $T_{\text{fin}} = 0.5.$

The $(x,z)$ domain for the macroscopic model is discretized using $N^x=N^z=800$ cells, and simulated with the algorithm in \eqref{eq: fully_discrete_scheme}.  In order to be consistent with the choice of $\eta$, the microscopic model uses $i_{\max} = 800$ processors and $k_{\max} = 800$ stages.    The microscopic model is integrated
in time using the explicit Euler method.

In Figure~\ref{fig:smooth} we show the initial density and the final density profiles for both models at time $T_{\text{fin}}$.  We observe qualitative agreement between the microscopic and the macroscopic solutions.   In both cases, the data around $x=0.5$ is processed more slowly than the data in the rest of the domain, due to a slower processing rate $\alpha$ there.

\begin{figure}[ht!]
\centering
    \begin{subfigure}[t]{0.24\textwidth}
            \centering
            \includegraphics[width=\textwidth]{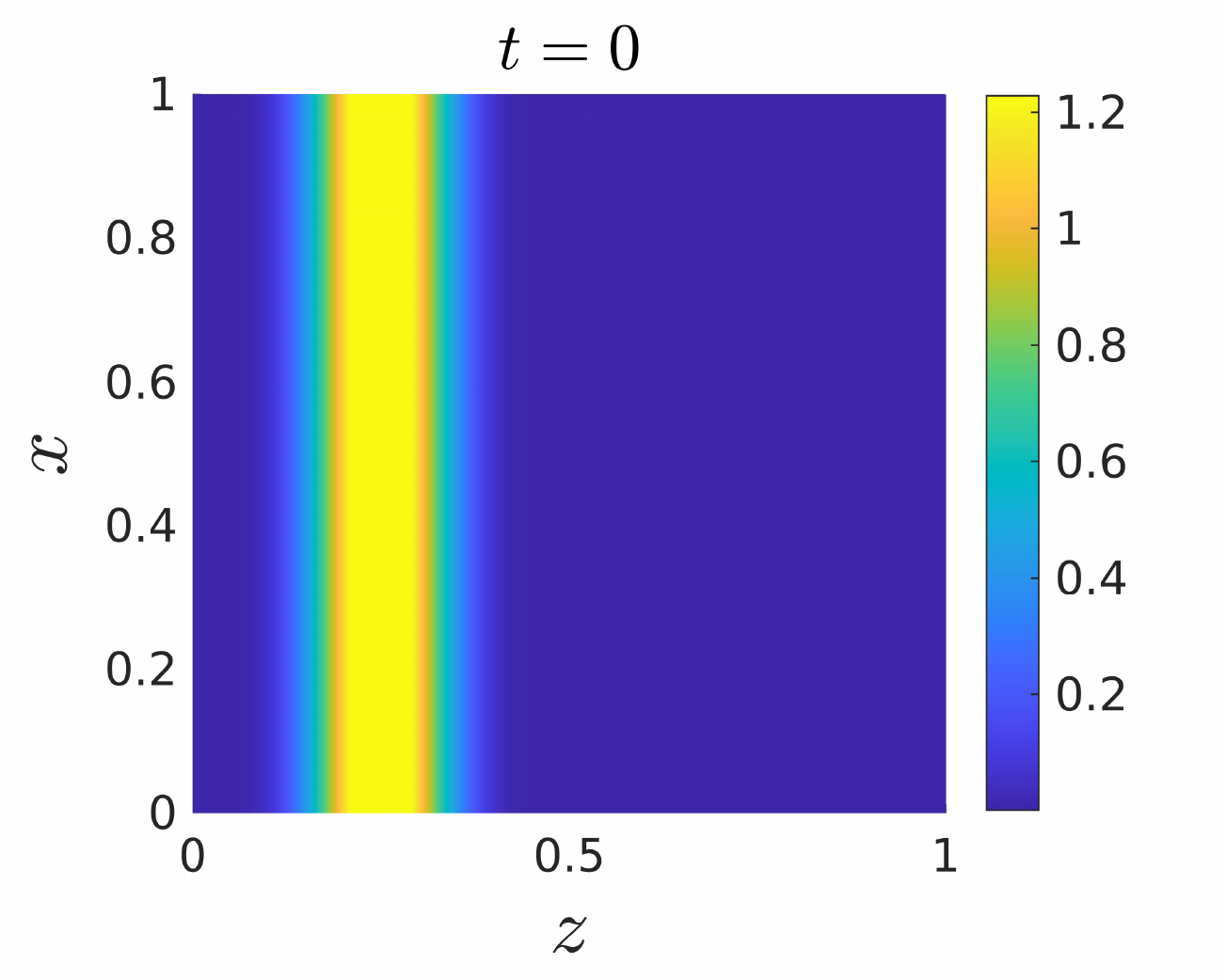}
    \label{fig:smooth_ic}
    \caption{initial density profile}
    \end{subfigure}
    \begin{subfigure}[t]{0.24\textwidth}
            \centering
            \includegraphics[width=\textwidth]{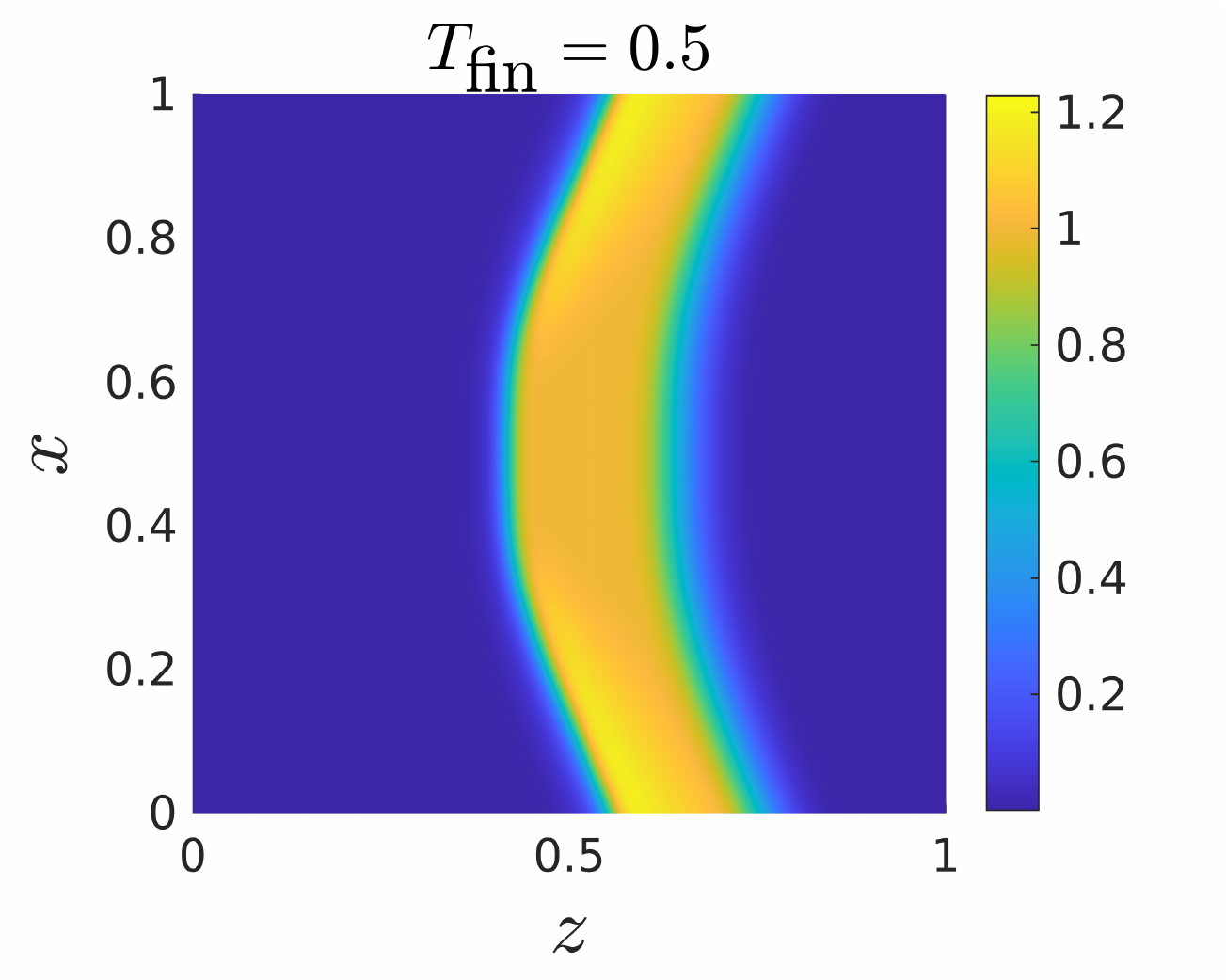}
    \label{fig:smooth_macro}
    \caption{macroscopic model
    }
    \end{subfigure}
    \begin{subfigure}[t]{0.24\textwidth}
            \centering
            \includegraphics[width=\textwidth]{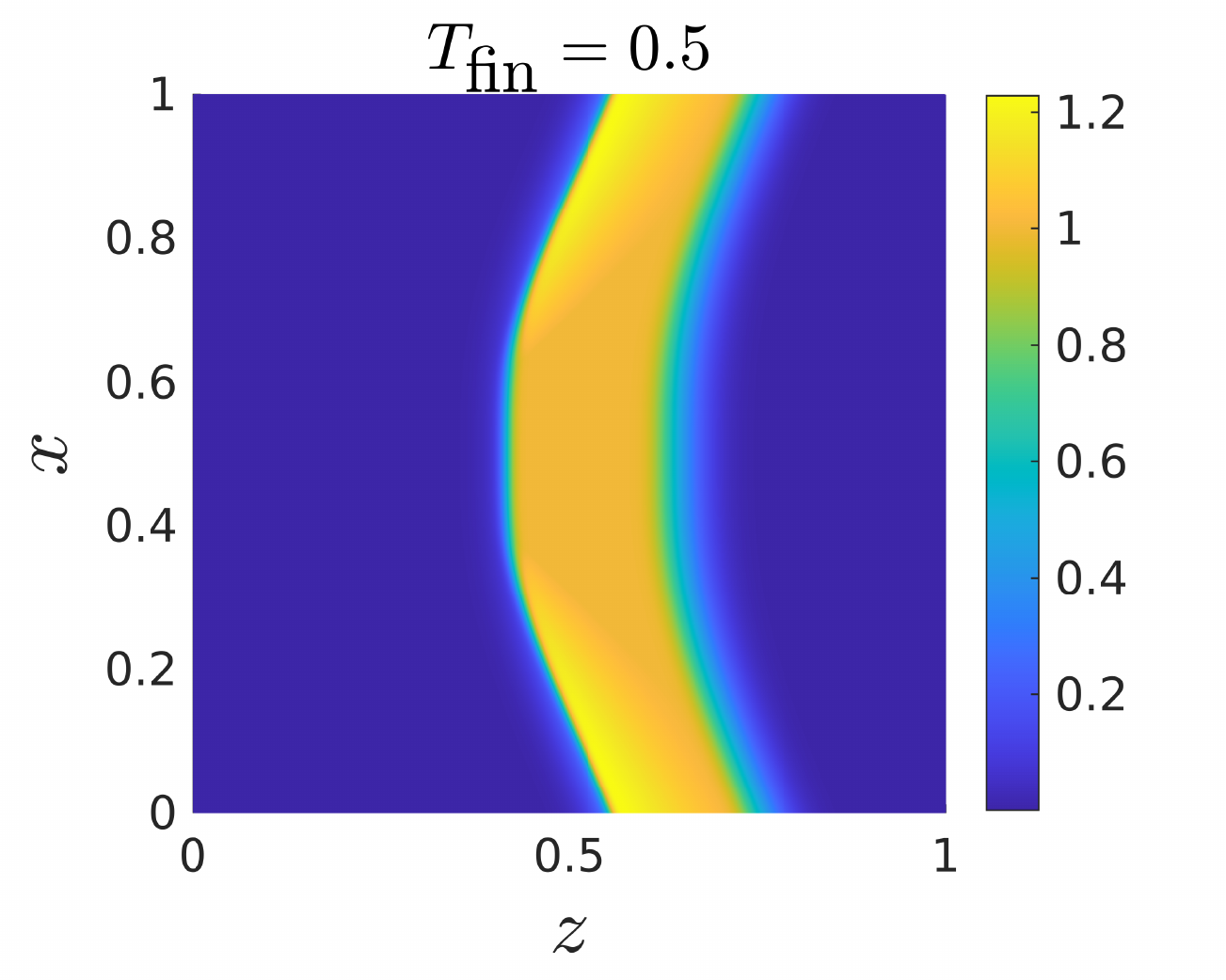}
    \label{fig:smooth_micro}
    \caption{microscopic model
    }
    \end{subfigure}
    \begin{subfigure}[t]{0.24\textwidth}
	\centering
	\includegraphics[width=\textwidth]{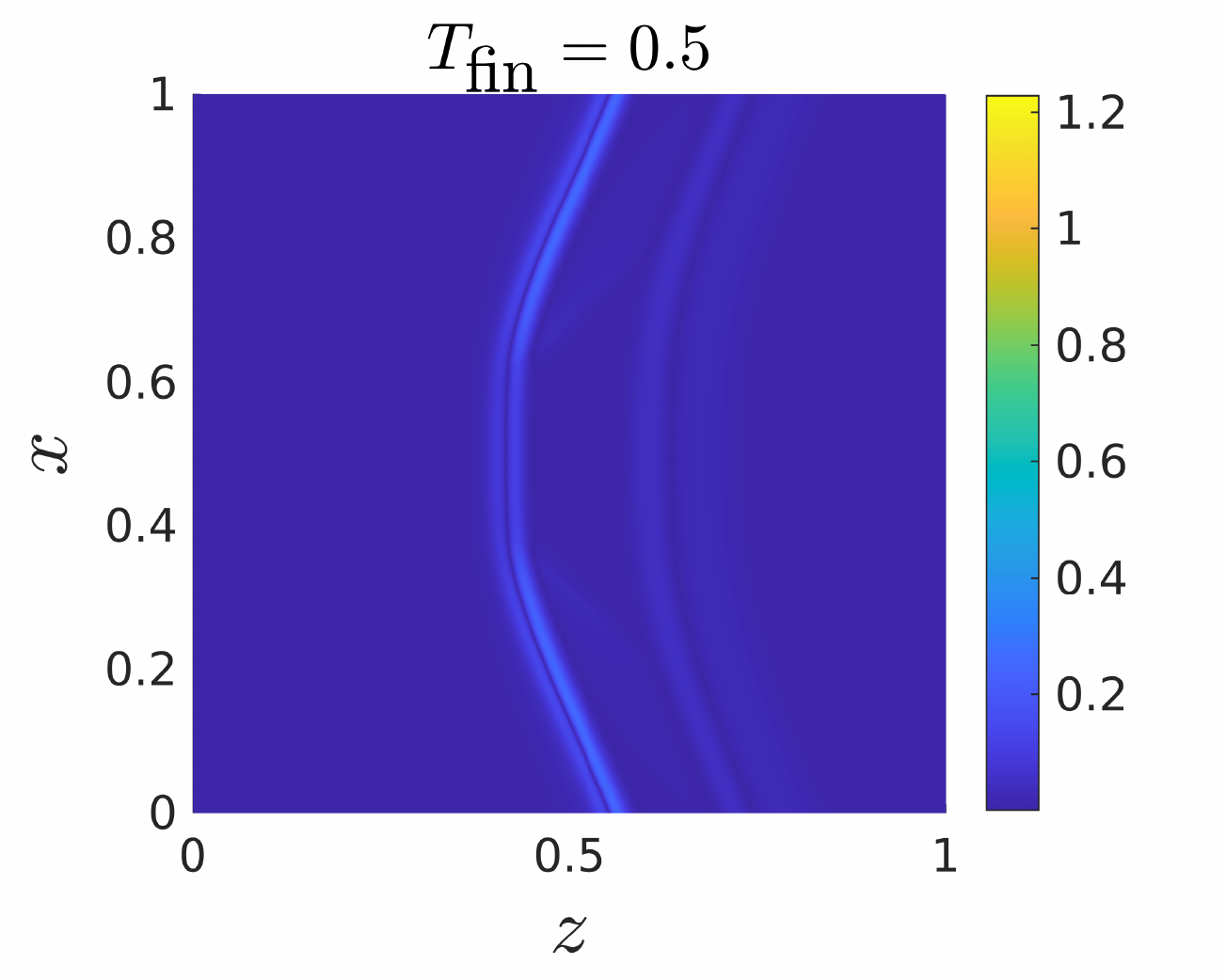}
	\label{fig:smooth_macro-micro}
	\caption{macro minus micro}
	\end{subfigure}
	\caption{Density profiles for Example 1. }
	\label{fig:smooth}
\end{figure}

In the remaining examples of this subsection, we explore the behavior of front-type solutions.  Unless otherwise stated the following parameters are used in all simulations:  
\begin{align} 
r = 0.5, \quad \rho_* = 0.8, \quad \alpha = 0.1, \quad T_{\text{fin}} =2.0.
\end{align}
The value of $T_{\text{fin}}$ ensures that data does not reach $z=1$; in particular the assumption in \eqref{eqNEWP} holds.  
These parameter choices, along with the initial and boundary conditions ensure that
\begin{equation}
   0 \leq \rho(t,x,z)  < \rho_*, \quad   \text{for all} ~(t,x,z) \in \mathbf{D},
\end{equation}
in which case the flux $\Phi$ is given by the simplified formula in \eqref{note:phi}.

\paragraph{Example 2:  front with constant profile.}
\label{ex:const_front}We consider the case of an initial constant front, namely
\begin{equation}
	\zeta_0(x) = \zeta_0  =0.2, \quad \forall\,x\in \mathbb{T},
\end{equation}
which provides  a trivial example for a piece-wise constant solution $\partial_x \zeta$ to \eqref{PDEform-x}.
Moreover, the evolution of this front is given explicitly in \eqref{eq:solThrottled1} and \eqref{eq:solThrottled2}:
\begin{equation}
    \label{eq: constant_front_soln}
    \zeta(t,x) = \zeta_0 + \frac{\alpha}{\rho_*}t.
\end{equation}
In this example $\eta$ is set to $0.5$ although the solution in \eqref{eq: constant_front_soln} is independent of this choice.

Numerical results are shown in Figure~\ref{fig:constfront}.  Since the processing rate $\alpha$ is constant the front moves with the same speed for each $x\in[0,1)$, as expected. 
Moreover, the analytical and the numerical solutions provide the same position of the front at final time.

\begin{figure}[ht!]
\centering
    \begin{subfigure}[t]{0.45\textwidth}
            \centering
            \includegraphics[width=\textwidth]{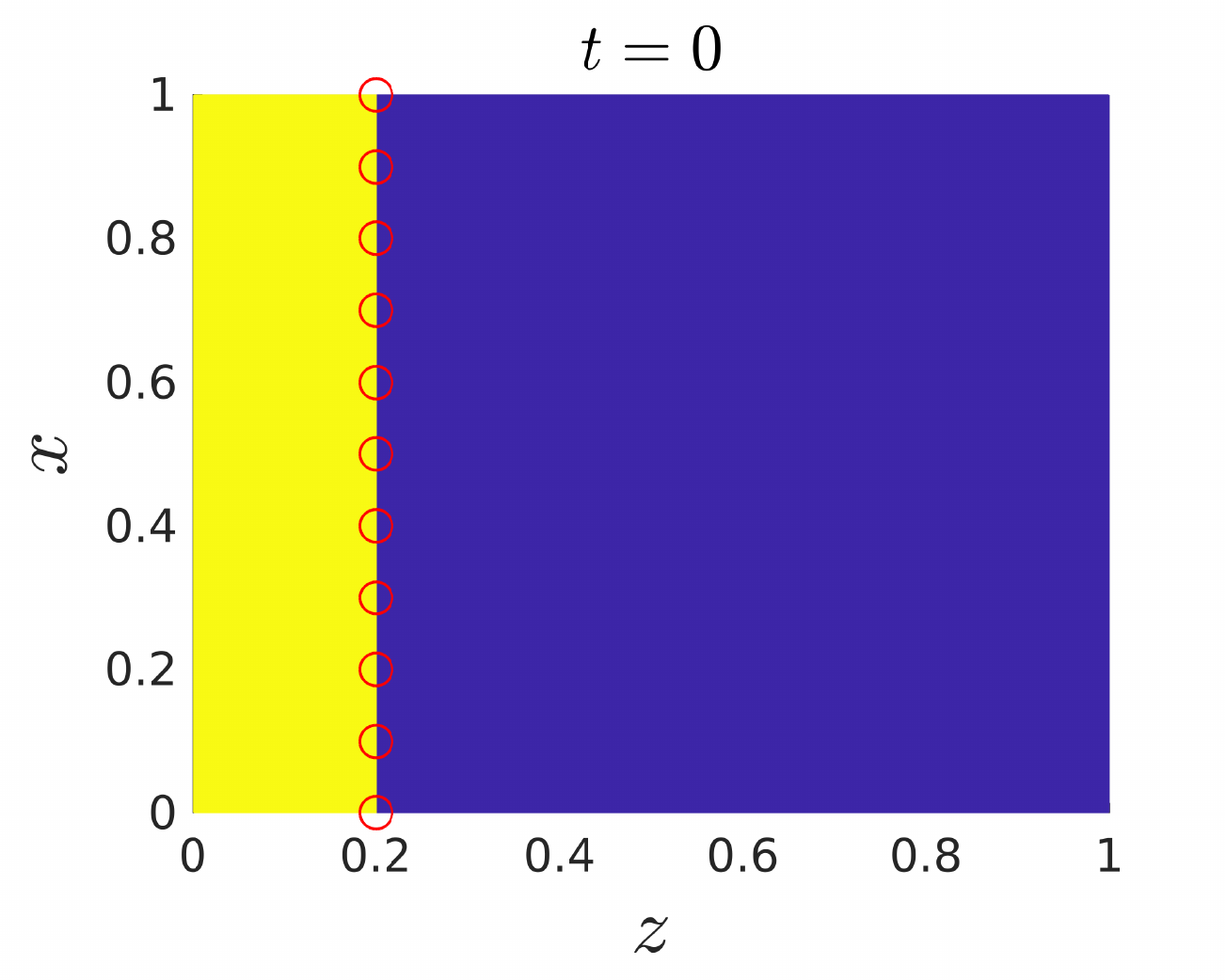}
    \label{fig:constfrontA}
    \caption{initial density profile}
    \end{subfigure}
    \begin{subfigure}[t]{0.45\textwidth}
            \centering
            \includegraphics[width=\textwidth]{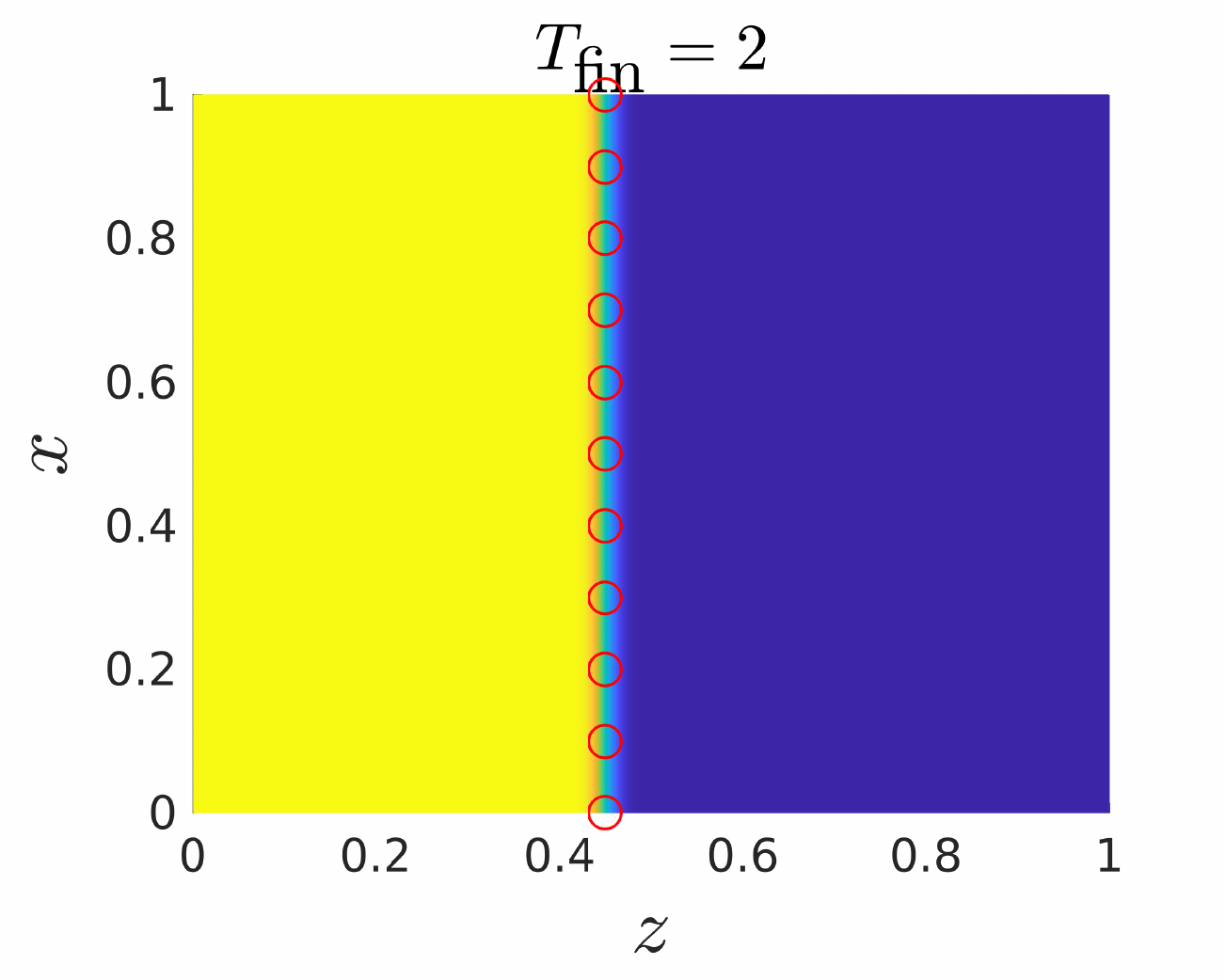}
    \label{fig:constfrontB}
    \caption{density profile at $t = T_{\text{fin}}=2$}
    \end{subfigure}\\
    [3ex]
    \begin{subfigure}[t]{0.45\textwidth}
            \centering
            \includegraphics[width=\textwidth]{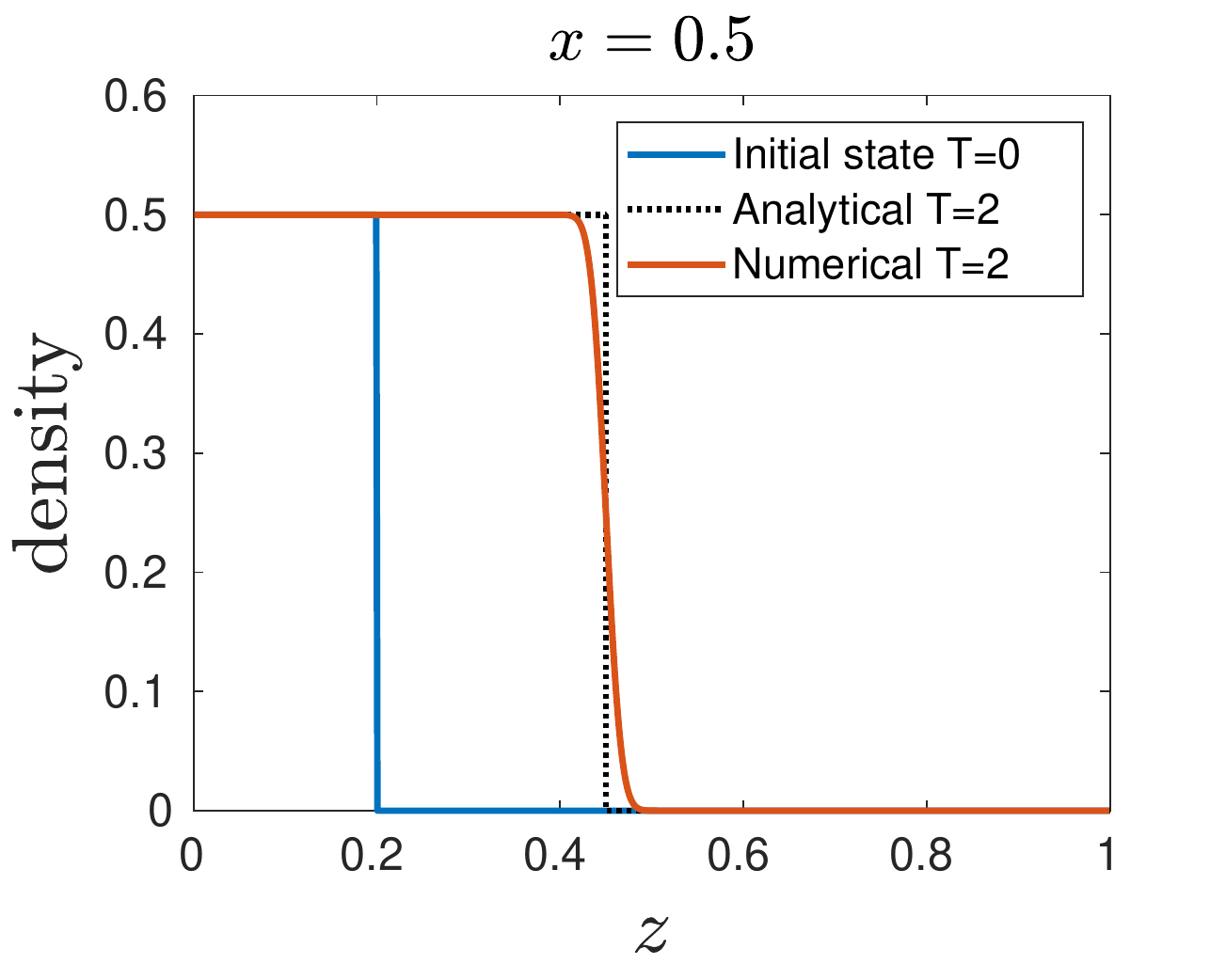}
    \label{fig:constfrontC}
    \caption{slice at $x=0.5$}
    \end{subfigure}
    \begin{subfigure}[t]{0.45\textwidth}
            \centering
            \includegraphics[width=\textwidth]{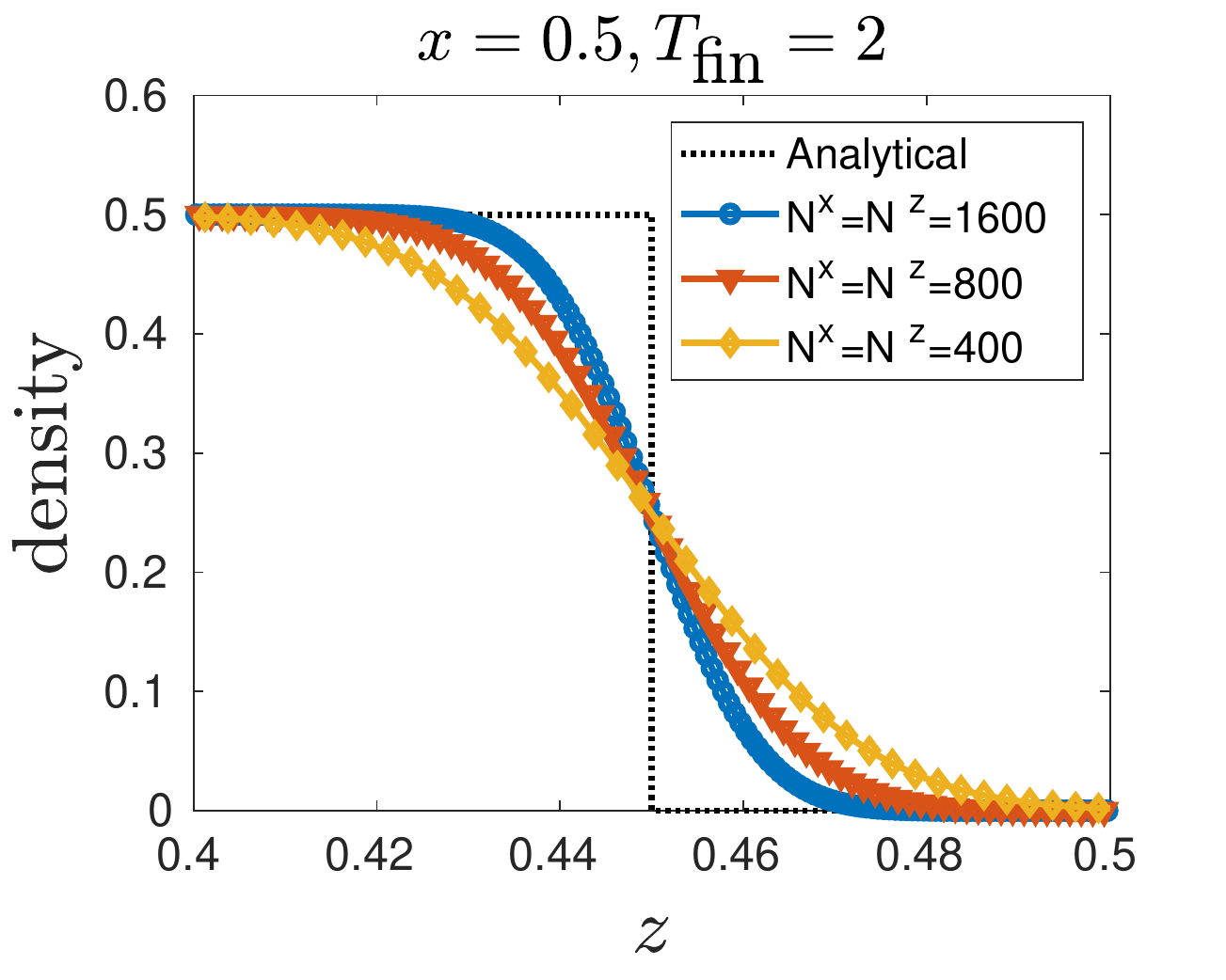}
    \label{fig:constfrontD}
    \caption{slice at $x=0.5$, $t = T_{\text{fin}}=2$}
    \end{subfigure}\\
    [3ex]
	\caption{ Density profiles based on the a simulation of the constant front solution \eqref{eq: constant_front_soln} with the scheme in \eqref{eq: fully_discrete_scheme}. The simulations in panels (a)-(c) use $N^x=N^z=800$ computational cells, while in panel (d), numerical solutions are computed at different refinement levels.  In the top row, the analytical front is marked by circles. }  
	\label{fig:constfront} 
\end{figure}

 \paragraph{Example 3: \textsf{V}-shaped front with small $\eta$.}  We consider an initial condition characterized by a \textsf{V}-shape front under the condition that $\eta | \partial_x \zeta | <1$; cf. \eqref{eq:front_equation}. This condition models the physical situation in which processors are  not initially at the same stage of completion.  Such a situation may be realized if a local group of processors is, at some previous time, slower than the rest \cite{Barnard2019}.  Specifically, we let
\begin{equation} \label{eq:frontV}
	\zeta_0(x) = (1-2z_0)\left |x - 0.5 \right| + z_0
\end{equation}
where $z_0 = \zeta_0(0.5) < 0.5$ is a fixed parameter.

Based on the theoretical findings in \cite{Dafermos1972} for \eqref{PDEform-x}, we expect that the front at later times will be piecewise linear.  Moreover, the formulas in \eqref{eq:solThrottled1} and  \eqref{eq:solThrottled2} can be used to generate the local solution analytically.  Indeed, under the assumption $\eta | \partial_x \zeta | <1$,
a small calculation shows that the local, short time solution away from $x=0.5$ is given by
\begin{equation}
\label{eq:zeta_vfront_local}
\zeta_{\rm{loc}}(t,x) = \zeta_0(x) + \frac{\alpha}{\rho_*} \Big(1 - \eta |\partial_x \zeta_0| \Big) t.
\end{equation}
where $|\partial_x \zeta_0|=(1-2z_0)$.
However at  $x=0.5$,  Theorem~\ref{lem1} does not apply, since the front is not in differentiable there;  hence  \eqref{eq:solThrottled1} and  \eqref{eq:solThrottled2} are not valid. We conjecture that at this point, 
	the front moves forward at full speed $\alpha / \rho_*$ and, as it catches up with neighboring points in the front, these points also move forward at full speed.  This means that the global front solution  is given by
\begin{equation}
\label{eq:zeta_vfront_global}
\zeta(t,x) = \max\left\{  \zeta_{\rm{loc}}(t,x), \, z_0 + \frac{\alpha}{\rho_*} t, \right \} .
\end{equation}

\begin{figure}[ht!]
\centering
    \begin{subfigure}[b]{0.32\textwidth}
            \centering
            \includegraphics[width=\textwidth]{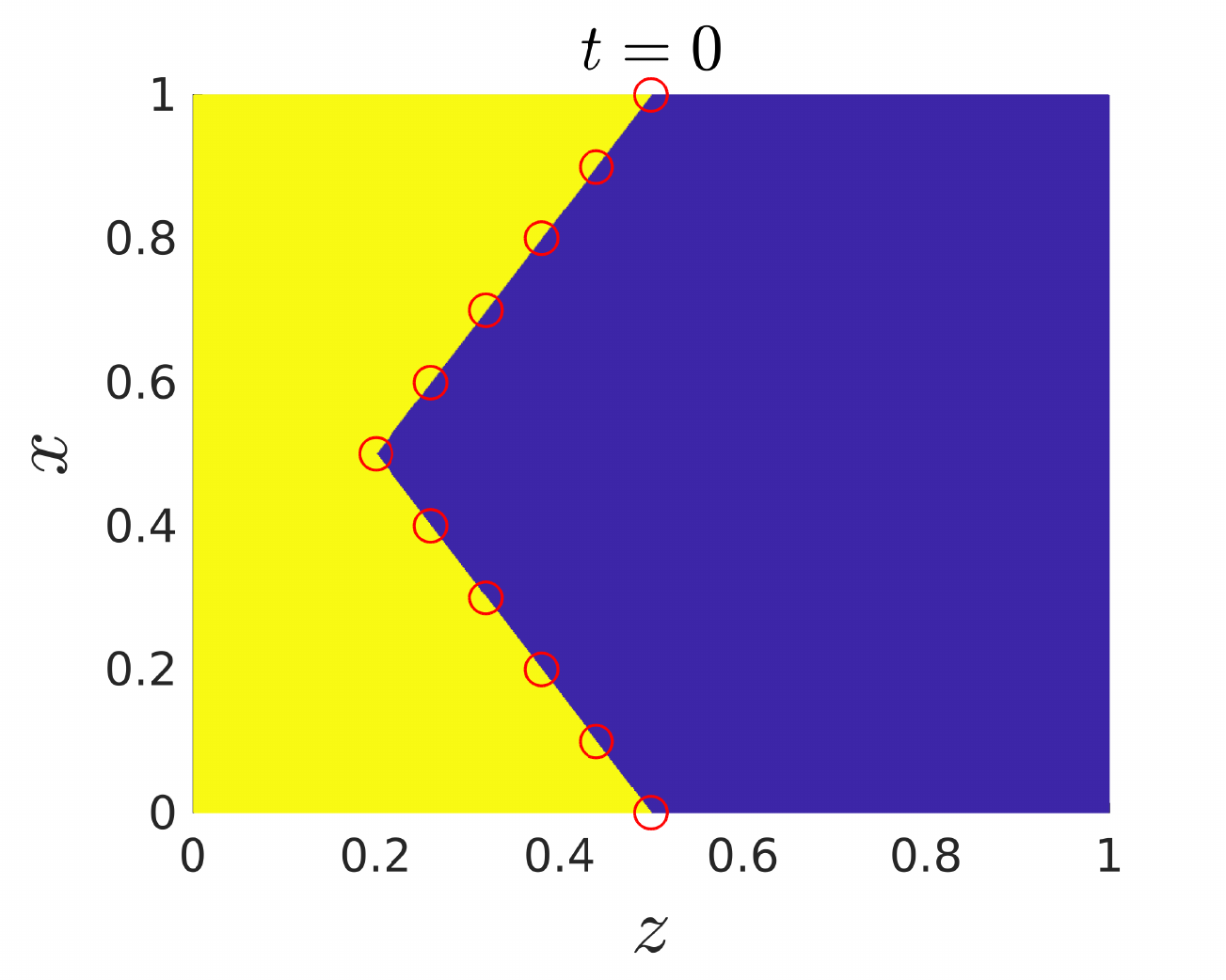}
    \label{fig:nonconstfrontA}
    \caption{initial density profile}
    \end{subfigure}
    \begin{subfigure}[b]{0.32\textwidth}
            \centering
            \includegraphics[width=\textwidth]{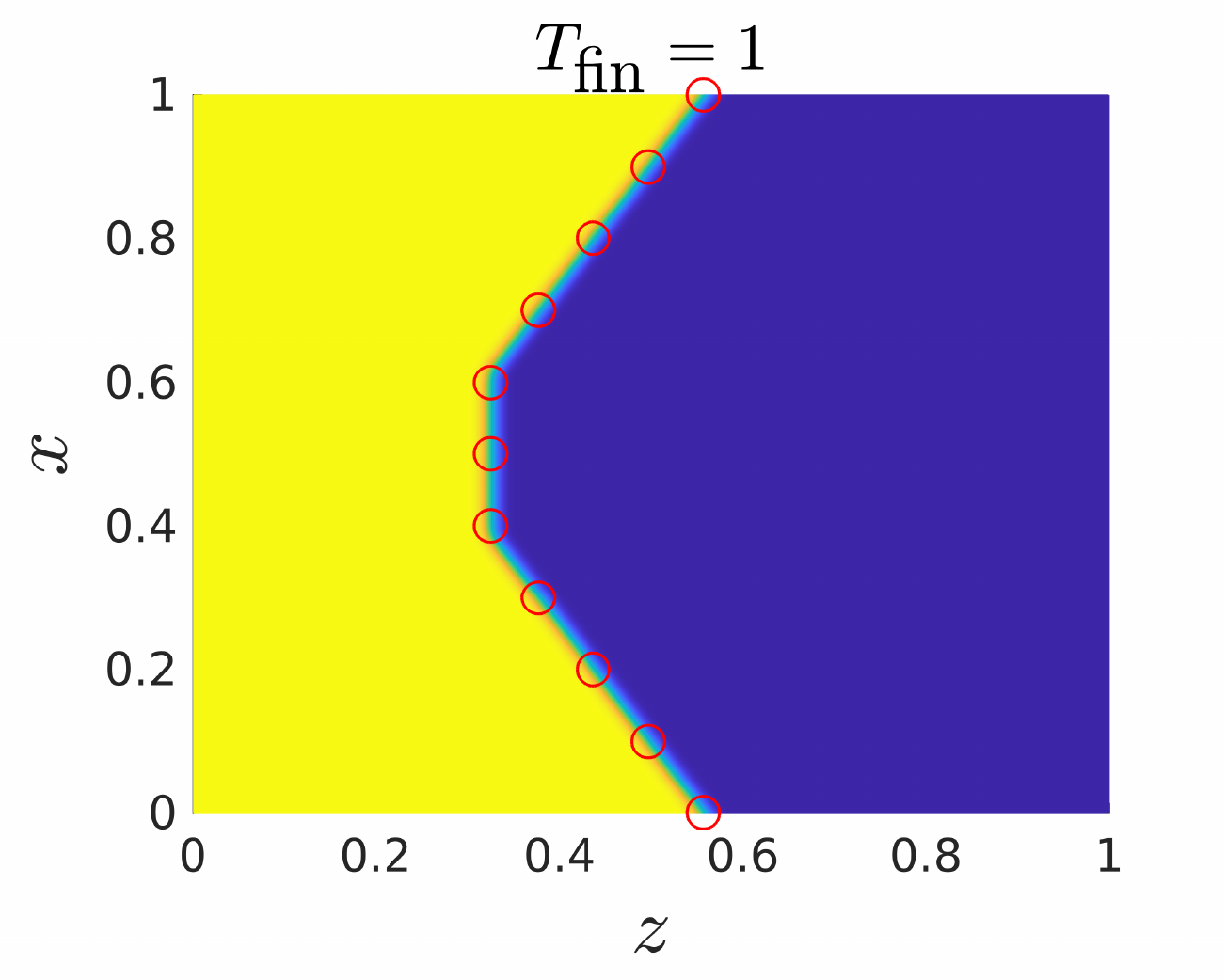}
    \label{fig:nonconstfrontB}
    \caption{density profile at $t=1$}
    \end{subfigure}
    \begin{subfigure}[b]{0.32\textwidth}
            \centering
            \includegraphics[width=\textwidth]{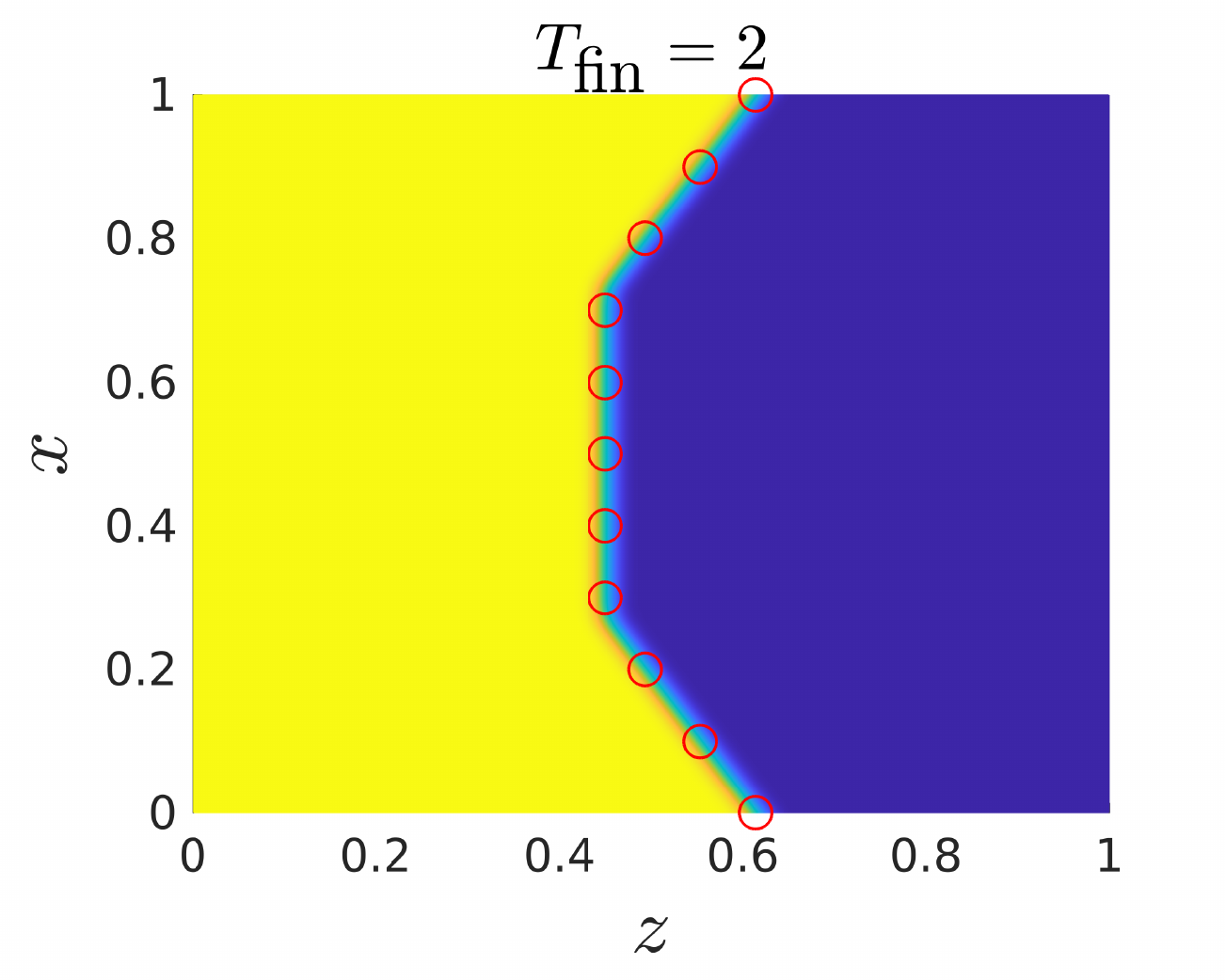}
    \label{fig:nonconstfrontC}
    \caption{density profile at $T_{\text{fin}}=2$}
    \end{subfigure} \\
    [3ex]
    \begin{subfigure}[b]{0.32\textwidth}
            \centering
            \includegraphics[width=\textwidth]{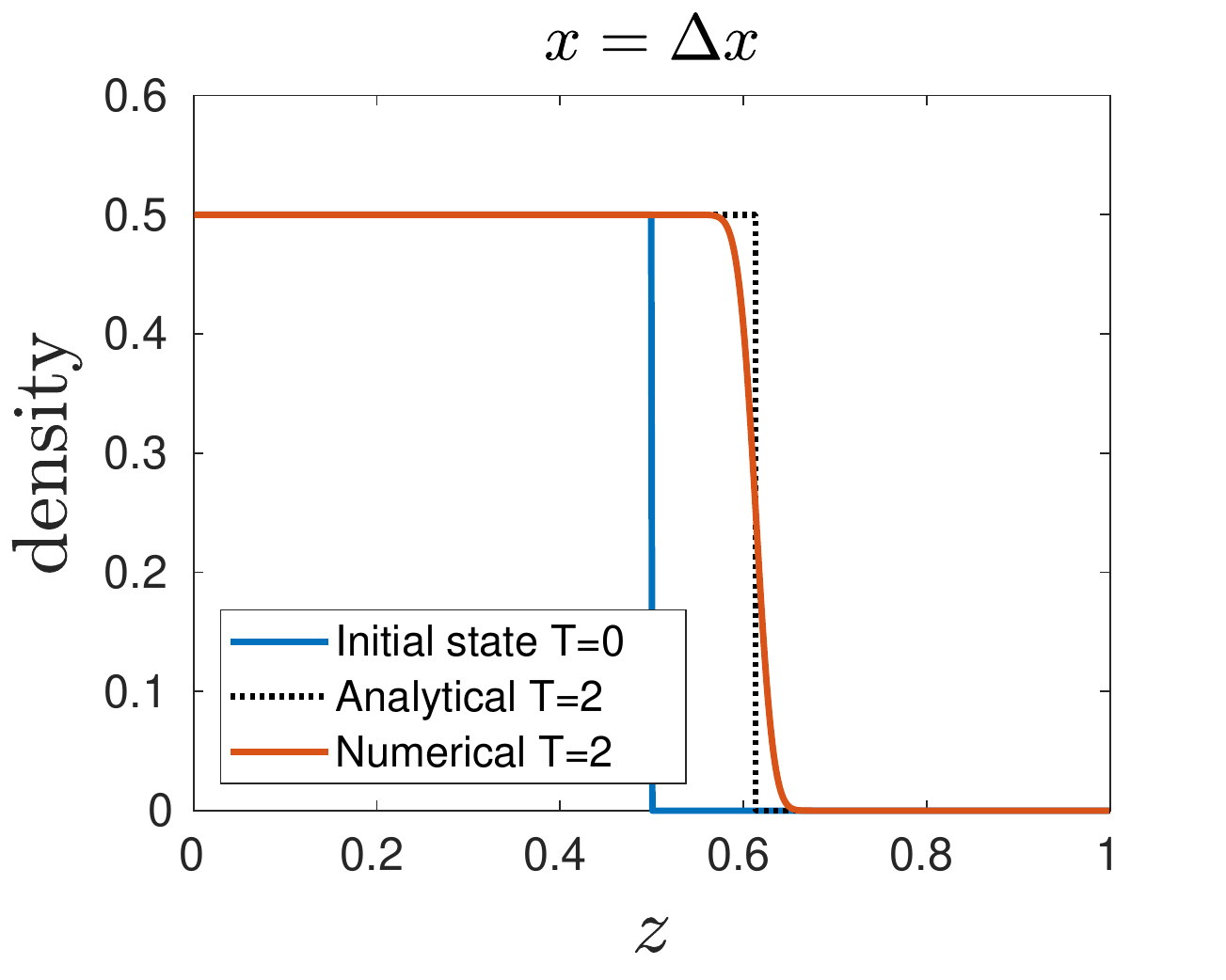}
    \label{fig:nonconstfrontD}
    \caption{slice at $x = \Delta x$}
    \end{subfigure}
    \begin{subfigure}[b]{0.32\textwidth}
            \centering
            \includegraphics[width=\textwidth]{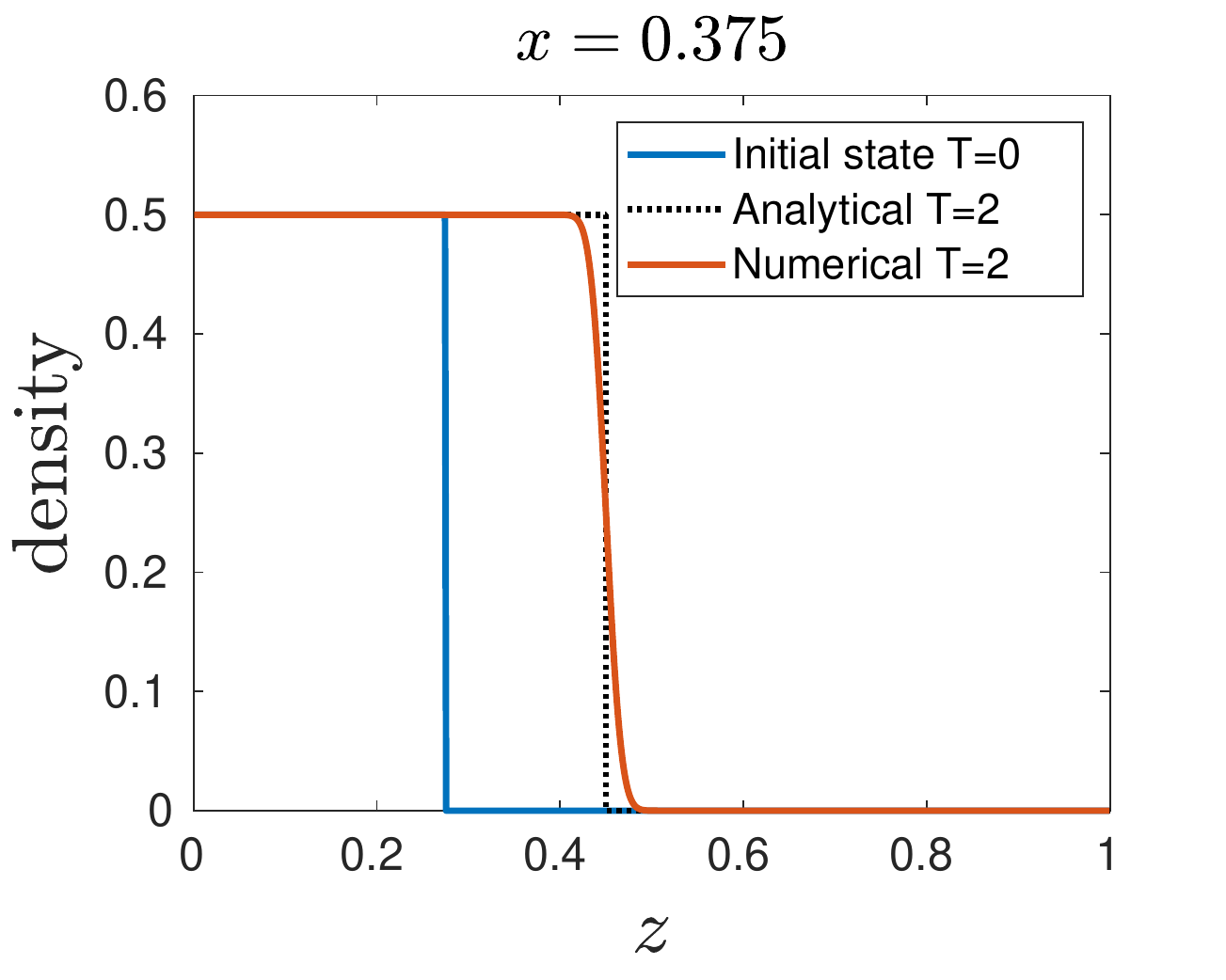}
    \label{fig:nonconstfrontE}
    \caption{slice at $x = 0.375$}
    \end{subfigure}
    \begin{subfigure}[b]{0.32\textwidth}
            \centering
            \includegraphics[width=\textwidth]{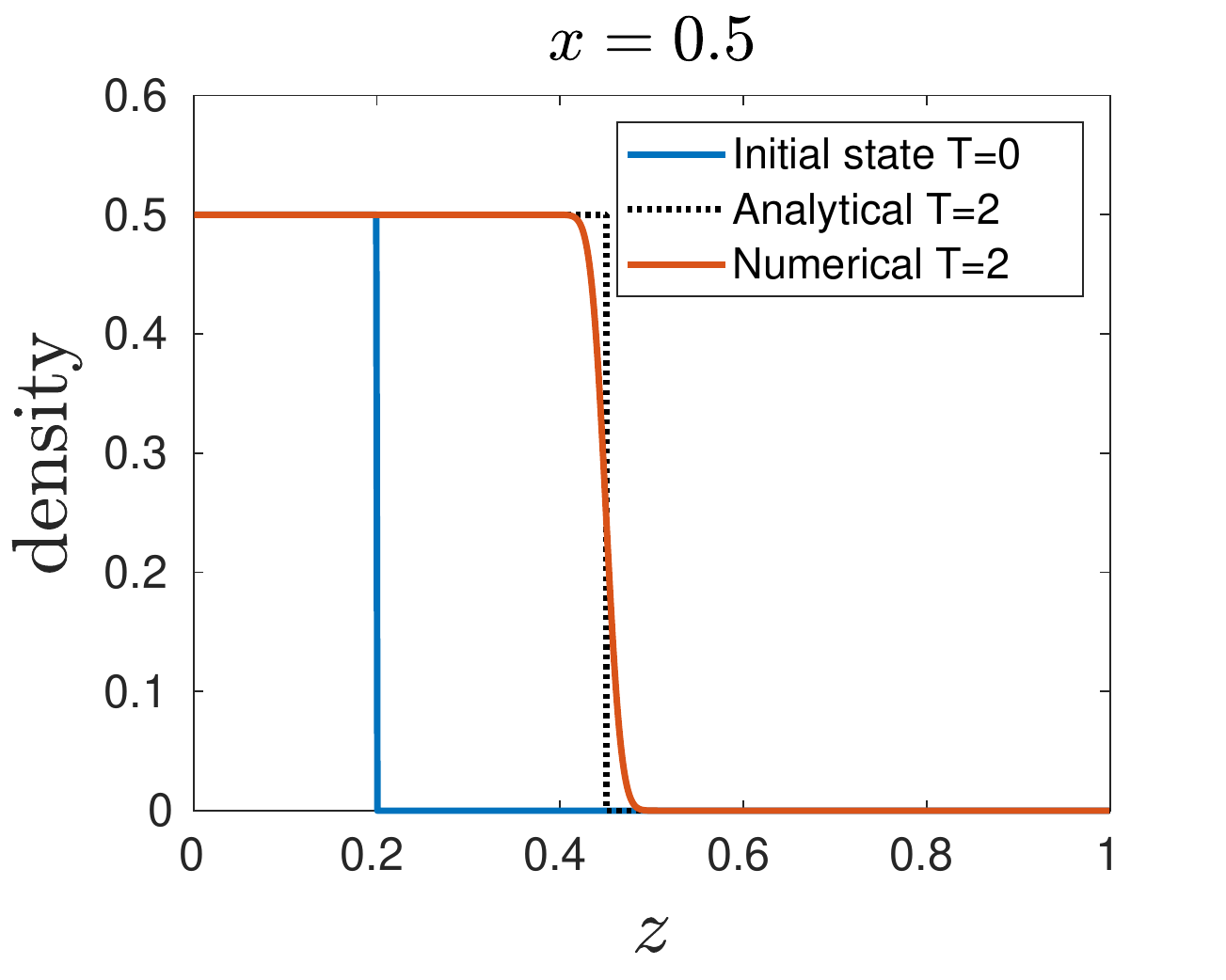}
    \label{fig:nonconstfrontF}
    \caption{slice at $x = 0.5$}
    \end{subfigure}\\
    [3ex]
	\caption{ Density profiles based on the simulation of the \textsf{V}-shaped initial front \eqref{eq:frontV} with the scheme in \eqref{eq: fully_discrete_scheme} using $N^x=N^z=800$.  The analytic solution  is computed using  \eqref{eq:zeta_vfront_global}. In the top row, the analytical front is marked by circles.  } 
	\label{fig:nonconstfront} 
\end{figure}

In Figure~\ref{fig:nonconstfront} we show numerical results for this test problem using the algorithm in \eqref{eq: fully_discrete_scheme}..  We use a grid with $N^x=N^z=800$ computational cells.  We set $z_0=0.2$ (so that $|\partial_x \zeta _0(x)| = 0.6)$ and let $\eta =  1/1.1$.  It is clear from these plots that the analytic solution \eqref{eq:zeta_vfront_global} and the numerical solution provide the same position of the front. 

\paragraph{Example 4: Smooth front with small $\eta$.}

We repeat the test problem above, except now the initial front is given by a smooth function:
\begin{equation}
\zeta_0(x) =  \frac14 \cos(2\pi x)+0.3.
\end{equation}
In order to fulfill the condition $\eta |\partial_x \zeta|<1$ we choose $\eta^{-1} = \max\limits_{x \in\mathbb{T}}  \zeta_0(x) + 1 = {\pi}/2 +1$.

In Figure~\ref{fig:smooth_front} we show numerical results for this test problem using the algorithm in \eqref{eq: fully_discrete_scheme}.   We use a grid with $N^x=N^z=800$ computational cells and simulate the solution to a time horizon $T_{\text{fin}}=2$.  As in the previous example, \eqref{eq:zeta_vfront_global} can be used to evaluate the location of the front analytically.  The results in Figure~\ref{fig:smooth_front} demonstrate that the numerical and analytical solutions agree even when $\partial_x \zeta_0$ is not piece-wise constant.
\begin{figure}[t!]
	\centering
	\begin{subfigure}[b]{0.32\textwidth}
		\centering
		\includegraphics[width=\textwidth]{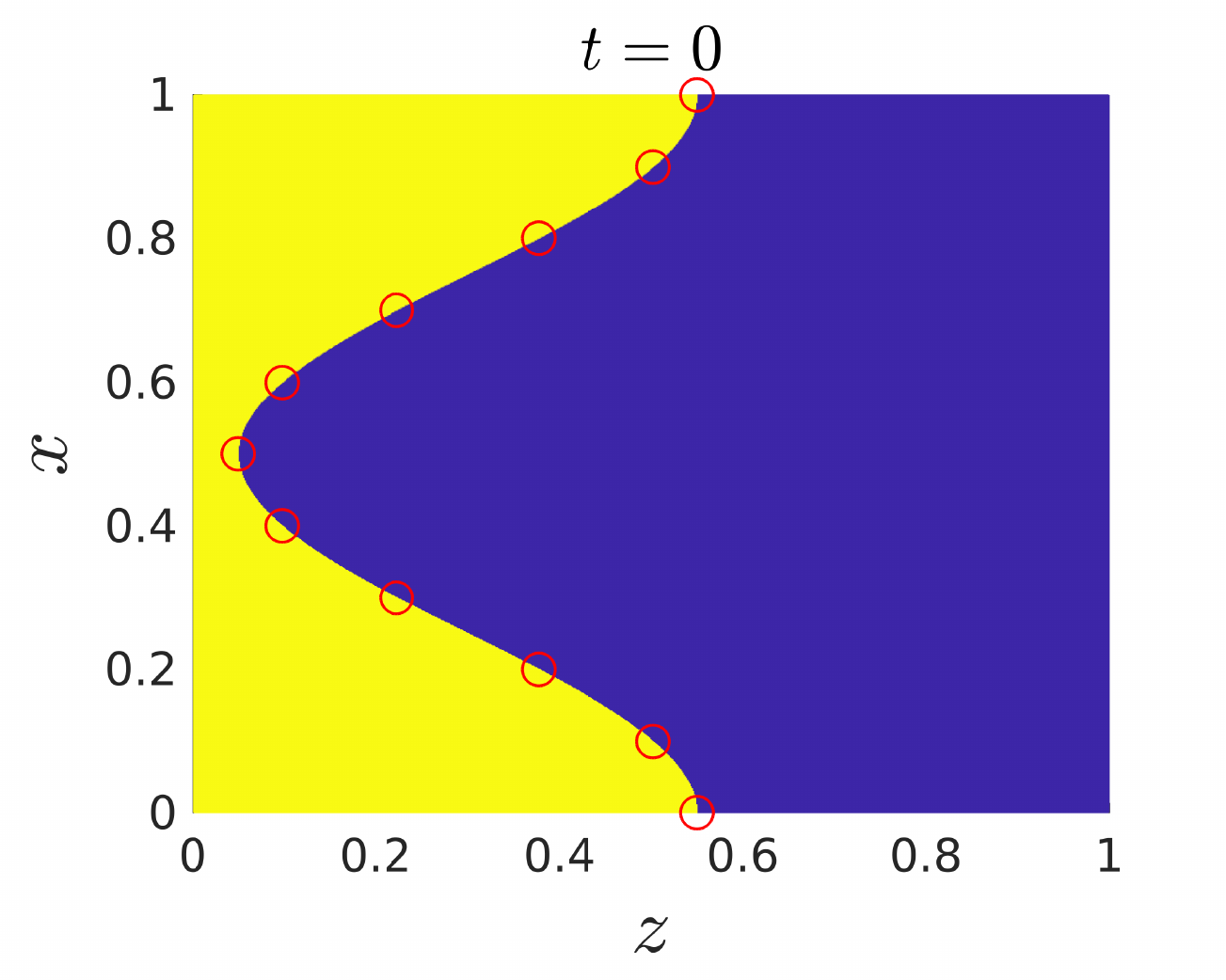}
		\label{fig:smooth_front_A}
		\caption{initial density profile}
	\end{subfigure}
	\begin{subfigure}[b]{0.32\textwidth}
		\centering
		\includegraphics[width=\textwidth]{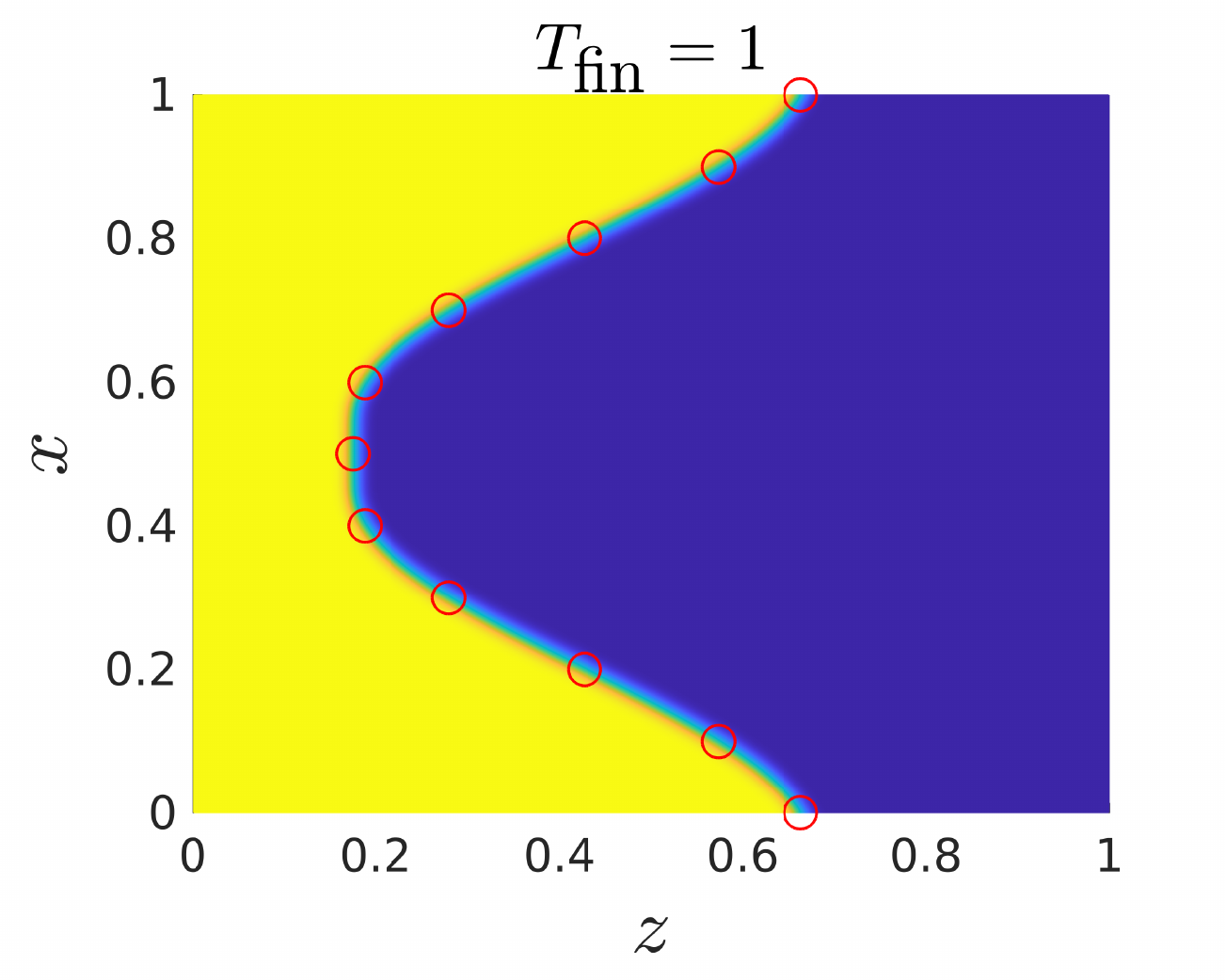}
		\label{fig:smooth_front_B}
		\caption{density profile at $t=1$}
	\end{subfigure}
	\begin{subfigure}[b]{0.32\textwidth}
		\centering
		\includegraphics[width=\textwidth]{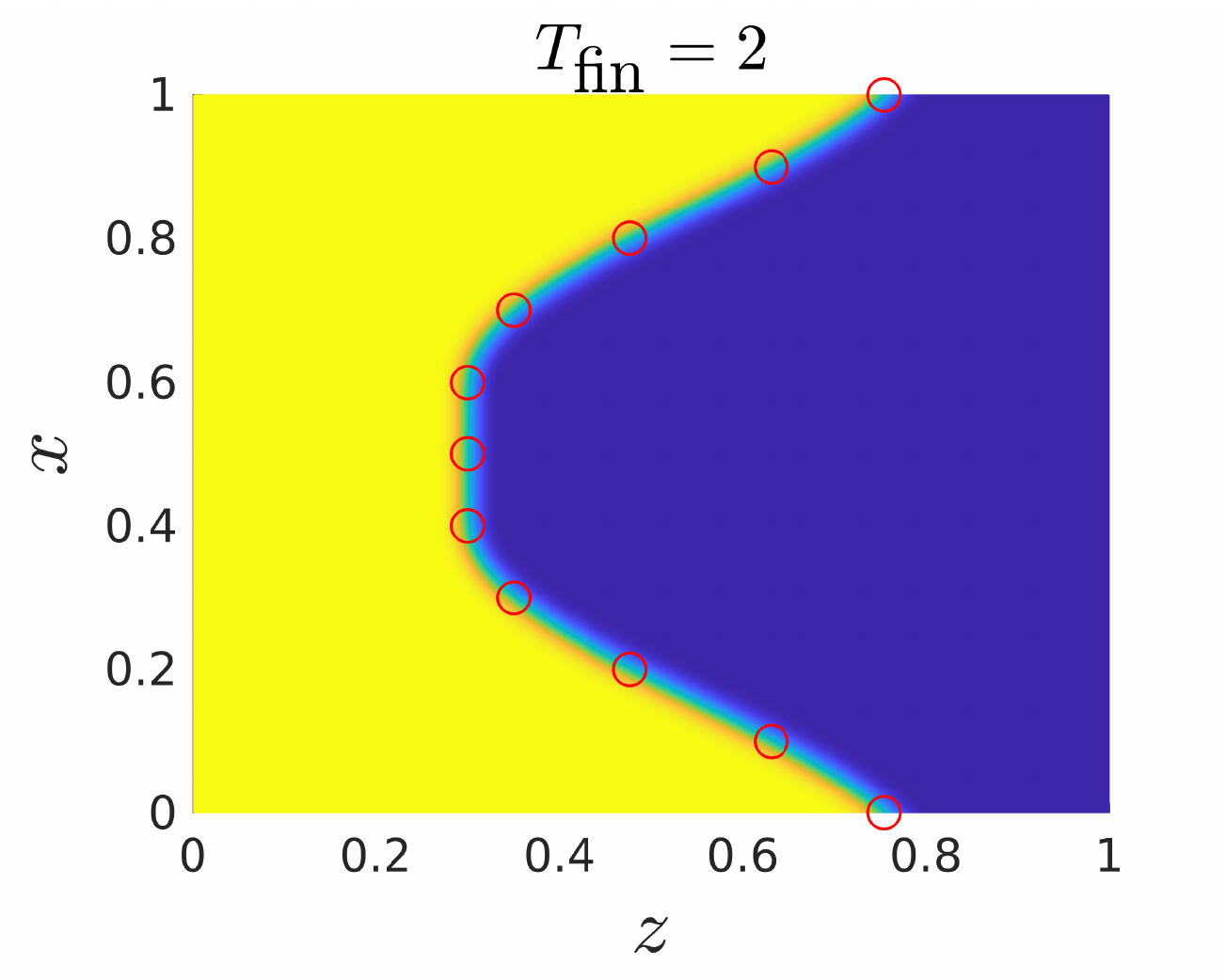}
		\label{fig:smooth_front_C}
		\caption{density profile at $T_{\text{fin}}=2$}
	\end{subfigure} \\
	[3ex]
	\begin{subfigure}[b]{0.32\textwidth}
		\centering
		\includegraphics[width=\textwidth]{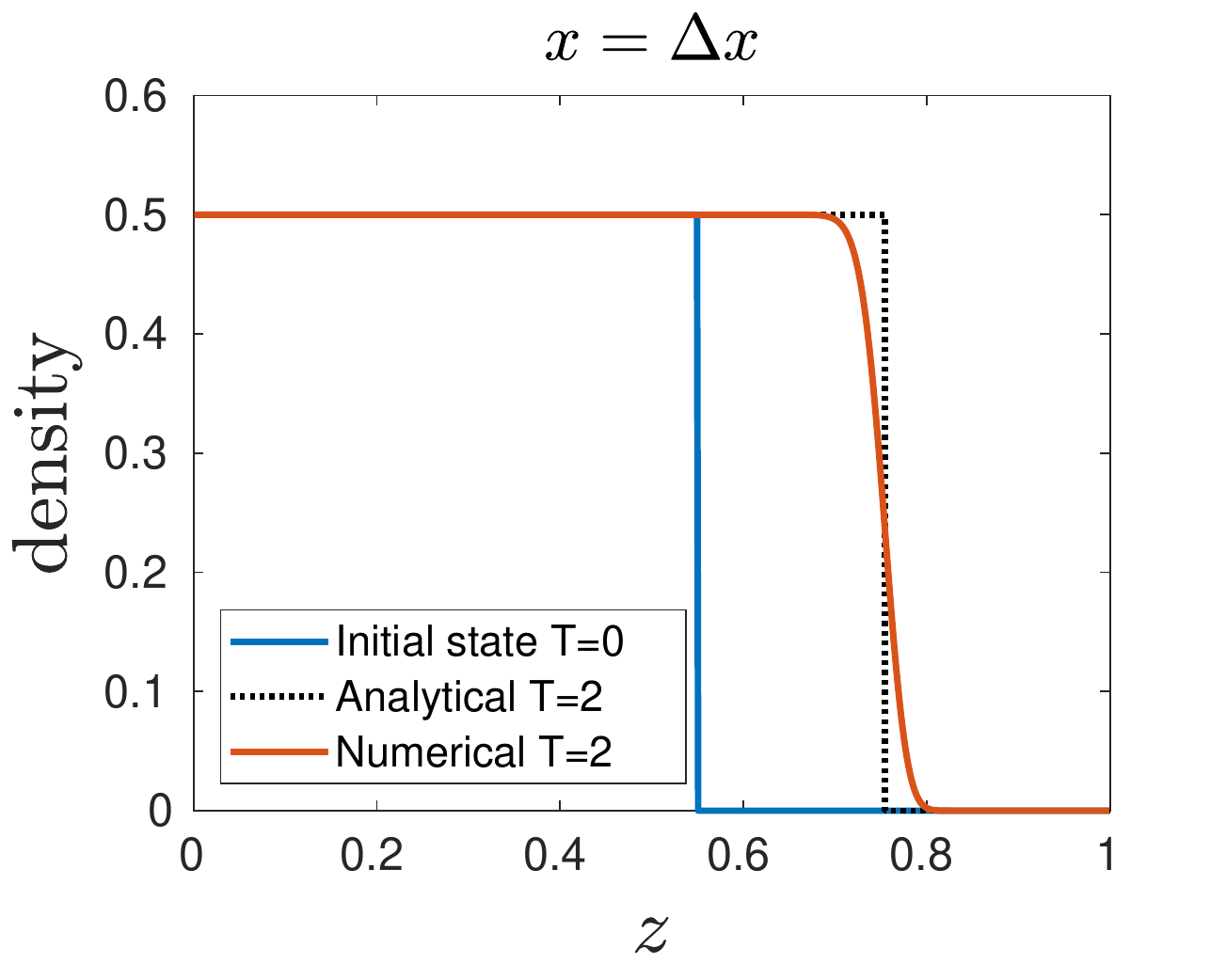}
		\label{fig:smooth_front_D}
		\caption{slice at $x = \Delta x$}
	\end{subfigure}
	\begin{subfigure}[b]{0.32\textwidth}
		\centering
		\includegraphics[width=\textwidth]{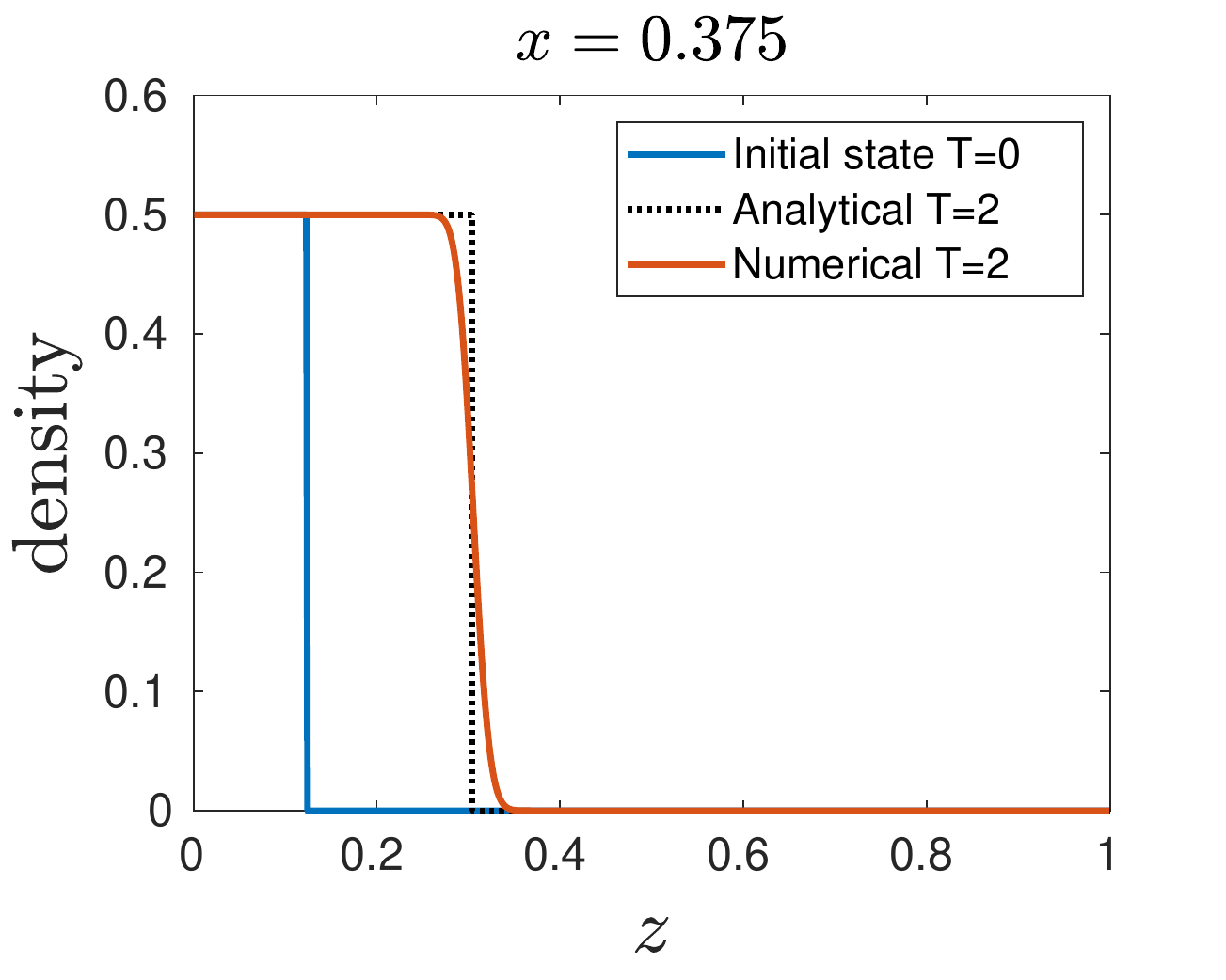}
		\label{fig:smooth_front_E}
		\caption{slice at $x = 0.375$}
	\end{subfigure}
	\begin{subfigure}[b]{0.32\textwidth}
		\centering
		\includegraphics[width=\textwidth]{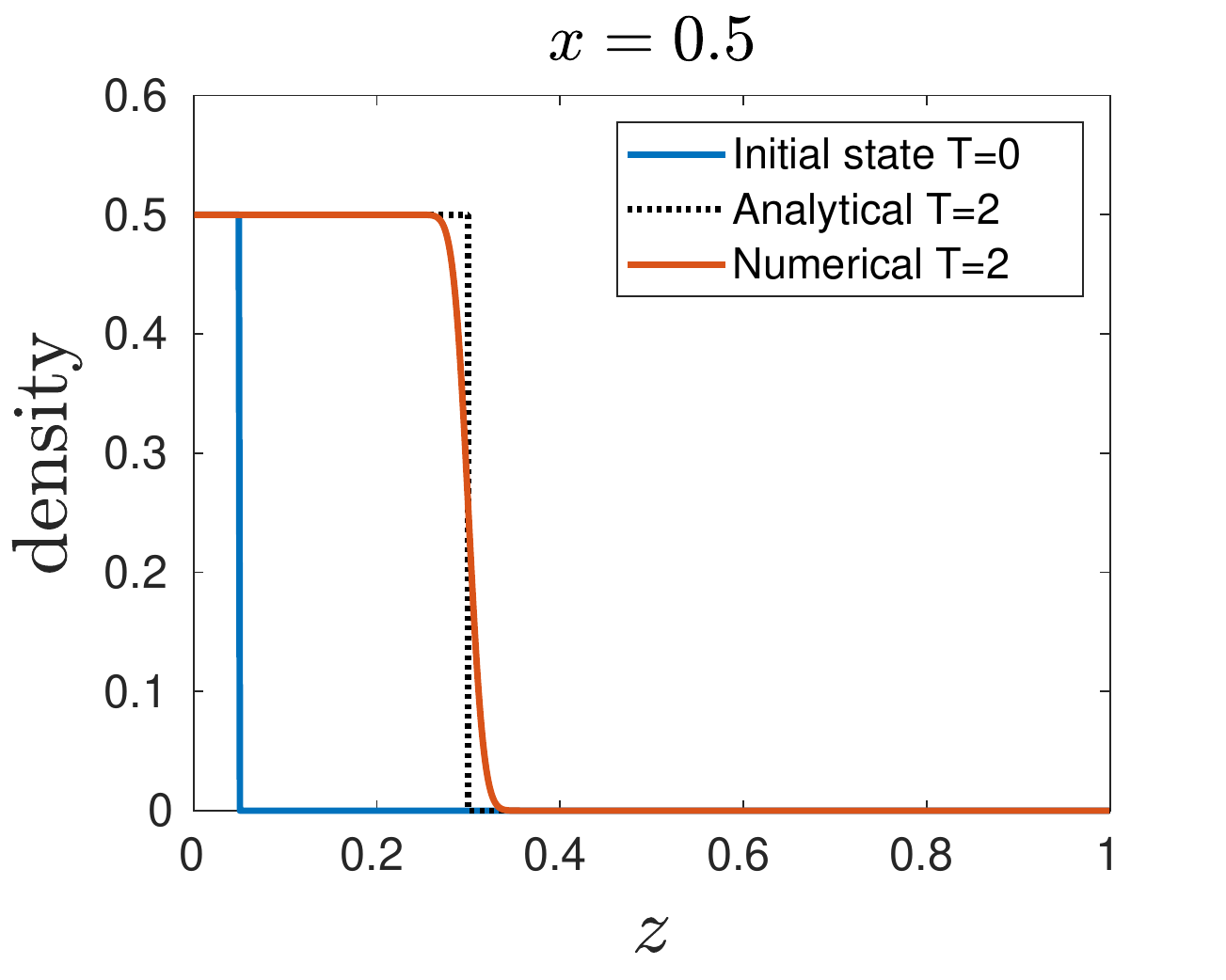}
		\label{fig:smooth_front_F}
		\caption{slice at $x = 0.5$}
	\end{subfigure}\\
	[3ex]
	\caption{Density profiles based on the simulation of the \textsf{V}-shaped initial front in \eqref{eq:frontV}, using the scheme in \eqref{eq: fully_discrete_scheme} with $N^x=N^z=800$. The analytic solution is computed using  \eqref{eq:zeta_vfront_global}. In the top row, the analytical front is marked by circles.}  
	\label{fig:smooth_front} 
\end{figure}

\paragraph{Example 5: \textsf{V}-shaped front with large $\eta$.} 
We consider again the initial condition~\eqref{eq:frontV} characterized by a \textsf{V}-shape front.  The model parameters are the same as in Example 3, except that $\eta$ is now large enough to ensure that $\eta | \partial_x \zeta | >1$.  In particular $|\partial_x \zeta |=0.6$, but $\eta = 10$.  In this case, we expect stalling for those processors $x\neq 0.5$; see  \eqref{eq:front_equation}.  Again at $x=0.5$ the solution is not described by Theorem~\ref{lem1}, since there the front is not $C^1$.  We expect that the front at this point will proceed at the maximum speed $\alpha/\rho_*$ and that the stalled points away front $x=0.5$ will also begin to move at this rate after they have been overtaken.  In other words, we conjecture that the global formula for the solution front is 
\begin{equation}
\label{eq:zeta_vfront__stall_global}
\zeta(t,x) = \max\left\{  \zeta_0(x) , \, z_0 + \frac{\alpha}{\rho_*} t\right\} .
\end{equation}

\begin{figure}[t!]
\centering
    \begin{subfigure}[b]{0.32\textwidth}
            \centering
            \includegraphics[width=\textwidth]{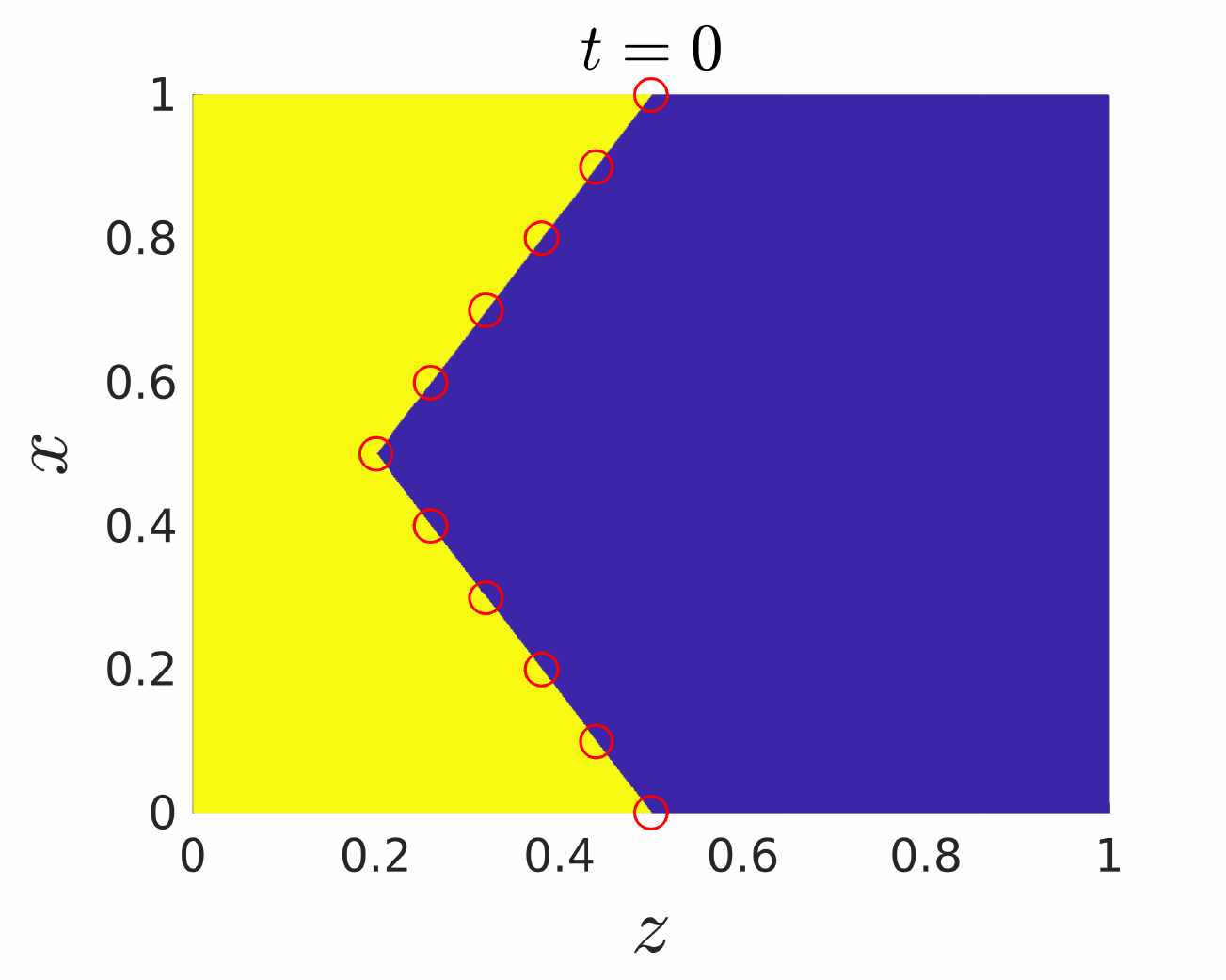}
    \label{fig:nonconstfront-etaHigh_A}
    \caption{initial density profile}
    \end{subfigure}
    \begin{subfigure}[b]{0.32\textwidth}
            \centering
            \includegraphics[width=\textwidth]{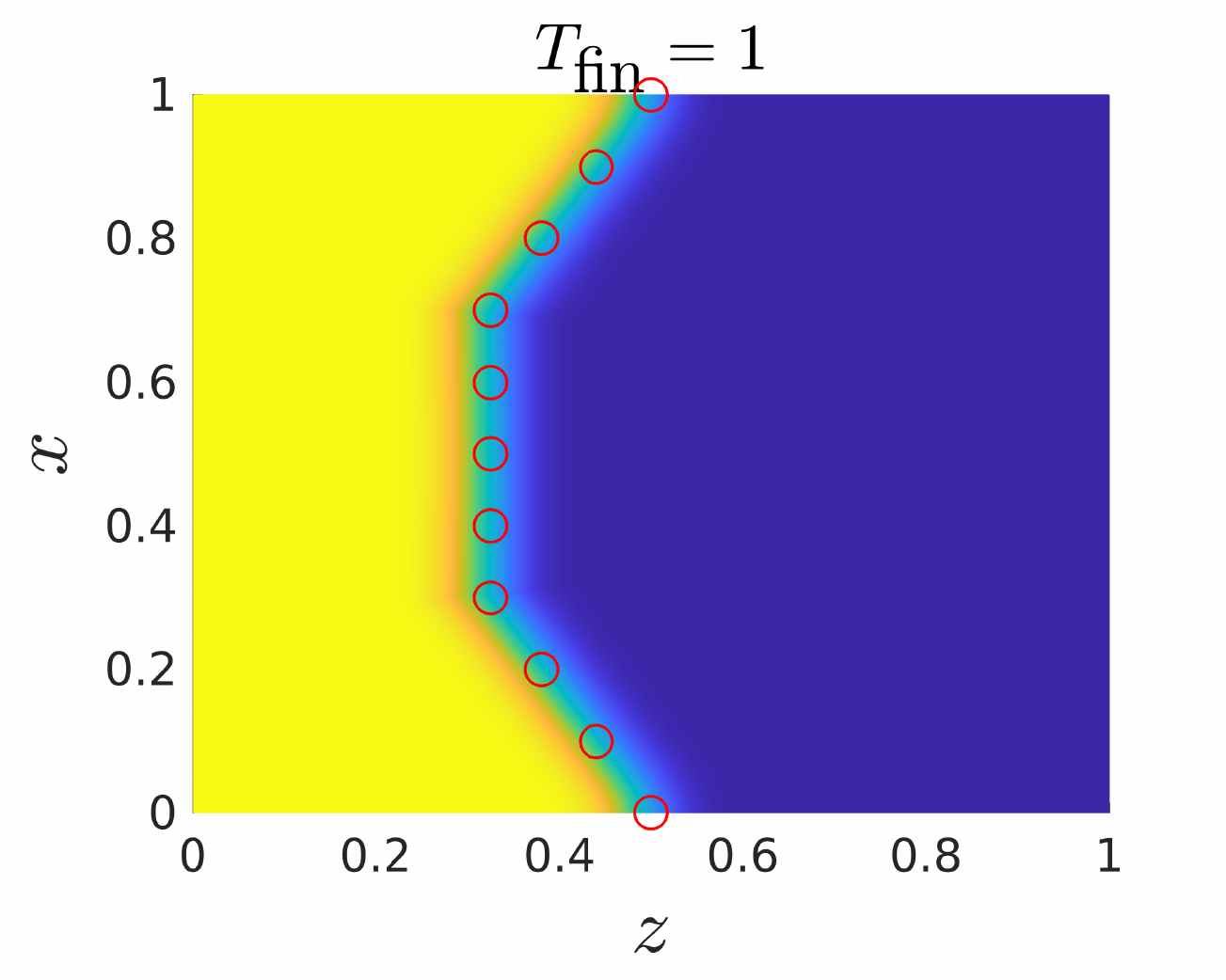}
    \label{fig:nonconstfront-etaHigh_B}
    \caption{density profile at $t=1$}
    \end{subfigure}
    \begin{subfigure}[b]{0.32\textwidth}
            \centering
            \includegraphics[width=\textwidth]{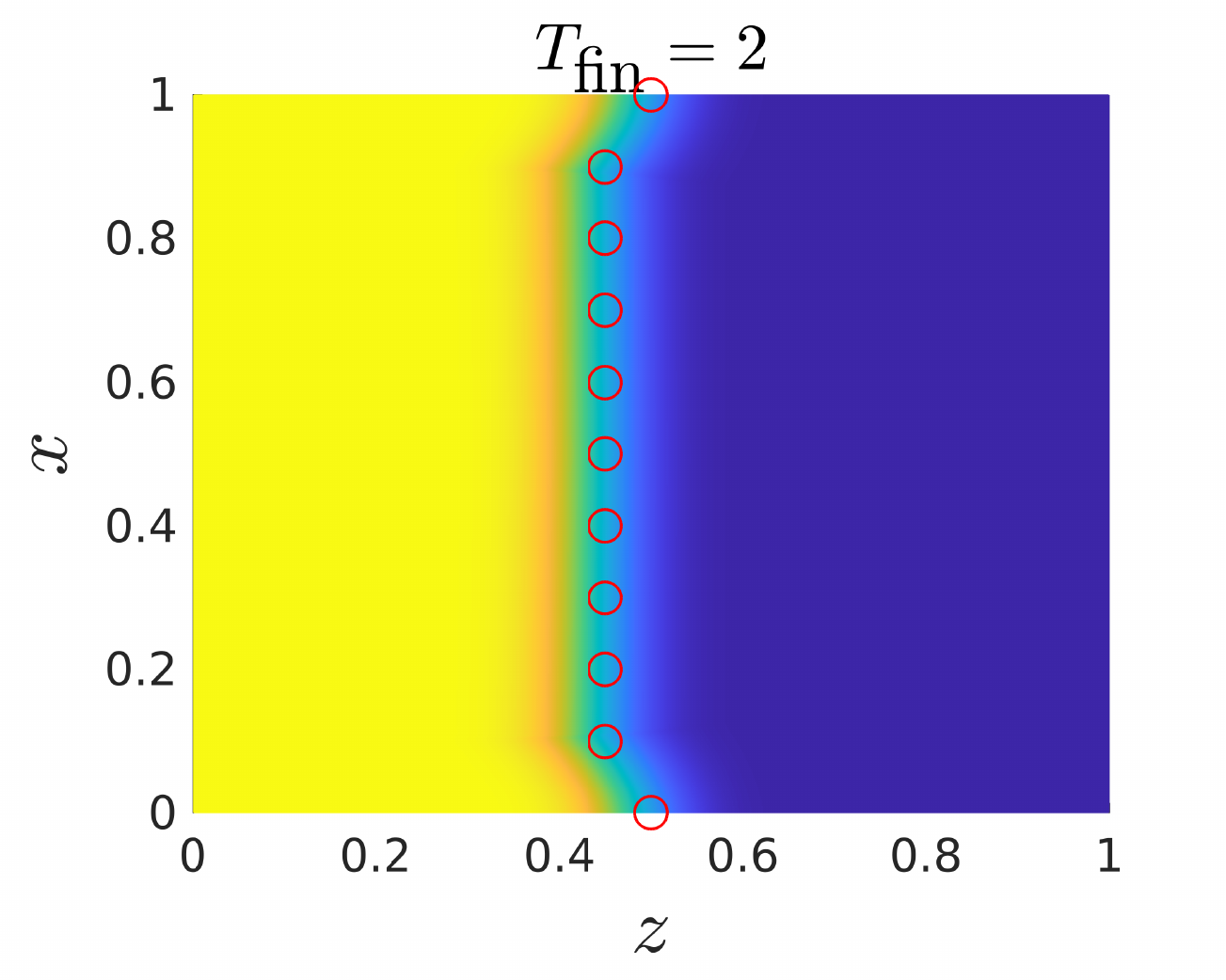}
    \label{fig:nonconstfront-etaHigh_C}
    \caption{density profile at $T_{\text{fin}}=2$}
    \end{subfigure} \\
    [3ex]
    \begin{subfigure}[b]{0.32\textwidth}
            \centering
            \includegraphics[width=\textwidth]{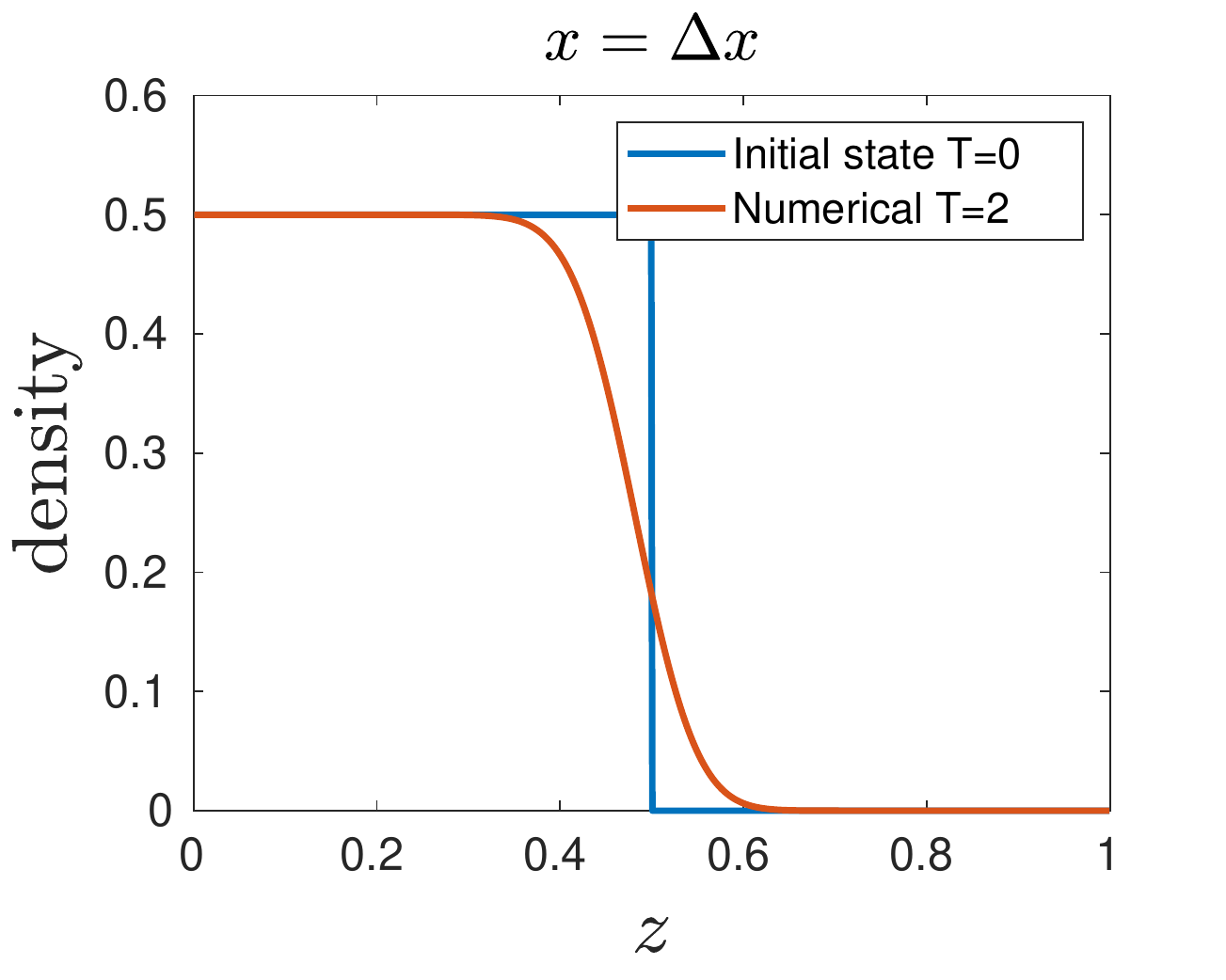}
    \label{fig:fig:nonconstfront-etaHigh_D}
    \caption{slice at $x = \Delta x$}
    \end{subfigure}
    \begin{subfigure}[b]{0.32\textwidth}
            \centering
            \includegraphics[width=\textwidth]{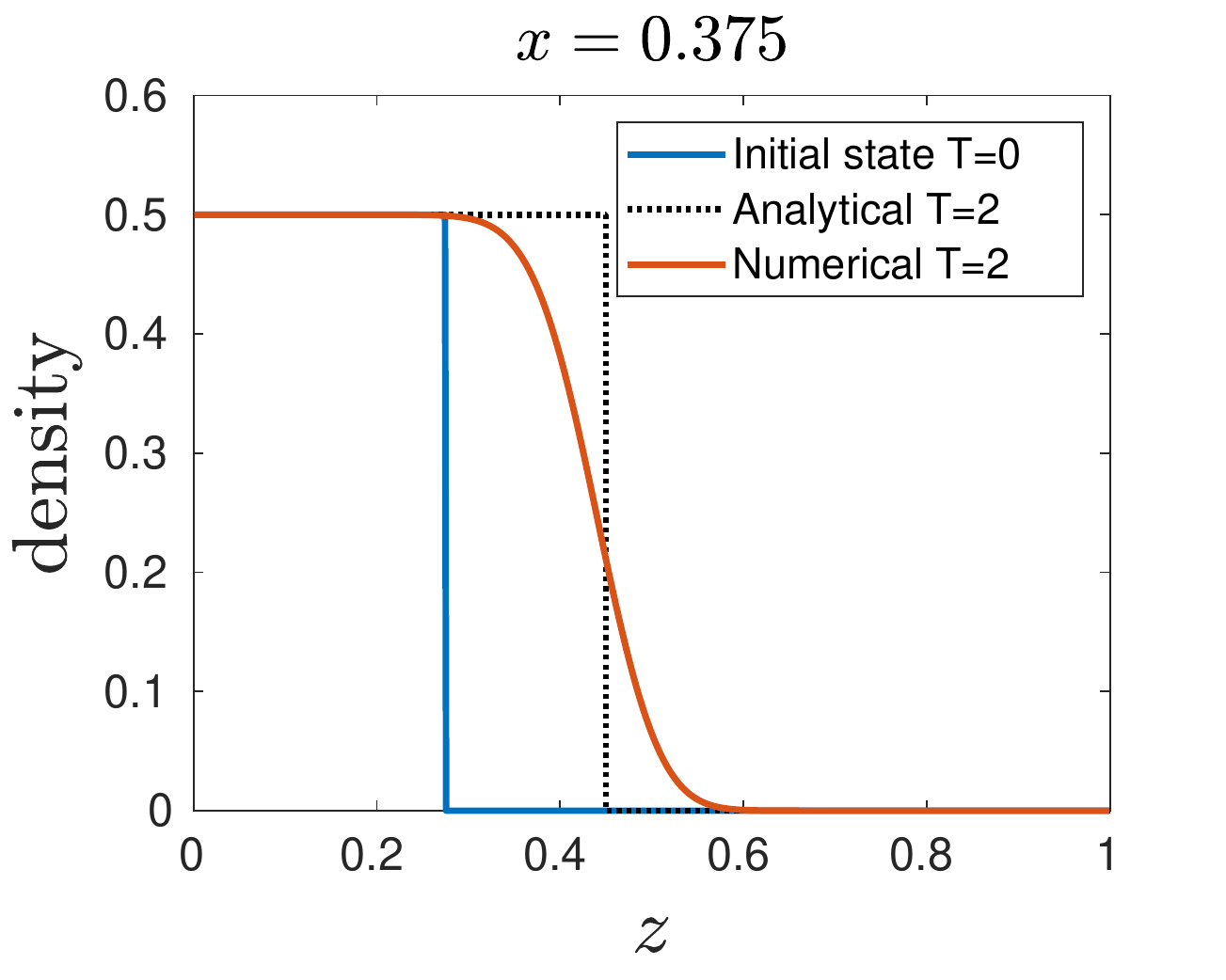}
    \label{fig:nonconstfront-etaHigh_E}
    \caption{slice at $x = 0.375$}
    \end{subfigure}
    \begin{subfigure}[b]{0.32\textwidth}
            \centering
            \includegraphics[width=\textwidth]{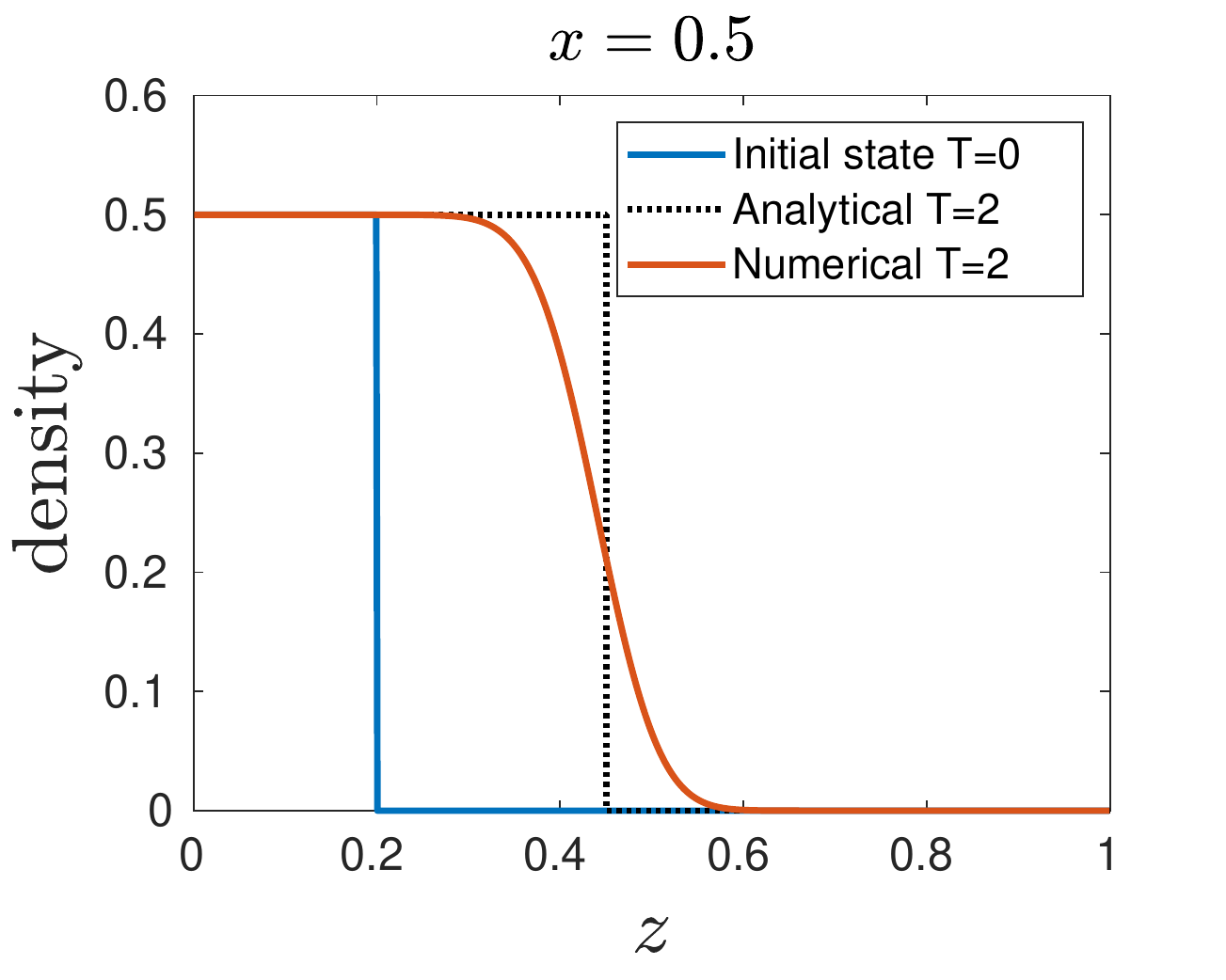}
    \label{fig:nonconstfront-etaHigh_F}
    \caption{slice at $x = 0.5$}
    \end{subfigure}\\
    [3ex]
	\caption{Density profiles based on the simulation of the \textsf{V}-shaped initial front \eqref{eq:frontV} with the scheme in \eqref{eq: fully_discrete_scheme} using $N^x=N^z=800$. The analytical solution is computed using \eqref{eq:zeta_vfront__stall_global}. In the top row, the analytical front is marked by circles.}  
	\label{fig:nonconstfront-etaHigh} 
\end{figure}

In Figure~\ref{fig:nonconstfront-etaHigh} we show numerical results for a grid with $N^x=N^z=800$ computational cells and a time horizon $T_{\text{fin}}=2$. 
These results agree with the analytical solution in \eqref{eq:zeta_vfront__stall_global}.

\subsection{Variations in the processor rate. } \label{sec:control}

The discrete model \eqref{eq:ode} can be used  to predict how variations in the processor rate affect computer performance, and to understand whether these rates can be controlled to achieve a prescribed objective, such as faster throughput.  For the continuum model in \eqref{model2}, such a control is specified via the function $\alpha$.  In this section, we briefly explore the effects of adapting $\alpha$ with the intent of a more extensive study in later work.   

In order to normalize the control action, we assume that for all $t$, 
\begin{equation} \label{alphabar} \int_{\mathbb{T}} \alpha(t,x) \mathrm{d}x = \bar{\alpha}.
\end{equation}
Since $\alpha$ is non-negative, so too is $\bar{\alpha}$; we assume further that $\bar{\alpha}>0$.  Then several choices for $\alpha$ can be envisioned.  The simplest choice is to set $\alpha$ equal to a constant. In this case, the normalization in \eqref{alphabar} implies that  $\alpha= \bar{\alpha}$.
For a front-type solution, this simple choice will be compared with a policy that depends on the initial data
 \begin{equation}
    \rho_0(x,z)=r H( \zeta_0(x)-z), \quad r<\rho_*, \quad  \text{and} \quad  \zeta_0(x)>0,
\end{equation}
Since the slope $\partial_x \zeta(t,x)$ controls the throttling due to neighboring processors, the condition $\partial_x \zeta(t,x)=0$ is desirable.  We therefore propose a choice for $\alpha$ that gives priority to processors that are trailing in the initial configuration.  Specifically, we consider
\begin{equation}\label{choice1}
\alpha(x)=  C_\alpha \, \rho_* \, \left( \zeta_{\rm{max}} - \zeta_0(x) \right) , 
\end{equation}
where $\zeta_{\rm{max}}>\max_{x \in \mathbb{T}} \zeta_0(x)$ is a fixed constant and $C_\alpha$, which depends on $\zeta_0$, is chosen  so that \eqref{alphabar} holds.
This choice of $\alpha$ will increase the speed of trailing processors

 In order to compare the temporal and spatial evolution of $\rho$ based on the policy \eqref{choice1} versus the constant $\alpha = \bar{\alpha}$ policy, we compute some quantities of interest
\begin{equation} \label{quantities}
\begin{aligned}
\omega_1(t,z) &:= \int_0^t  \int_{\mathbb{T}} \Phi(\rho(s,x,z), \sigma(s,x,z)) \mathrm{d}x \mathrm{d}s, \\
\omega_2(t,z) &:= \int_{\mathbb{T}} \Phi(\rho(t,x,z), \sigma(t,x,z) \mathrm{d}x, \\
\omega_3(t,z) &:= \int_0^t  \int_{\mathbb{T}} \rho(s,x,z) \mathrm{d}x \mathrm{d}s.
\end{aligned}
\end{equation}
The quantity $\omega_1$ measures the cumulative outflow at stage of completion $z$ up to time $t$; the quantity $\omega_2$ measures the current outflow at stage of completion $z$; and the quantity $\omega_2$ measures the cumulative part density at stage of completion $z$. The first two quantities are indicators of the processed
data whereas the last indicator shows the load of the processors, which may be related to the consumption of energy during computation.
In each case, larger values of $\omega_i$ are preferable.

For our numerical tests, we set  $T_{\rm{fin}}=1$ and $\bar{\alpha}=0.5$. The initial front $\zeta_0(x)$ is given as in~\eqref{eq:frontV} with $z_0=0.1$, in which case $C_\alpha \approx 0.5/0.575$. The quantities of interest are evaluated for  $z=0.5$ and $z=0.75$. 
  Figure~\ref{fig:control} shows the initial density profile and the profiles at $T_{\rm{fin}}=1$ for each of the two policies.  For the policy based on equation \eqref{choice1}, the control has been been active long enough to push the processors that were initially trailing ahead of the others.  This means the control has been on for too long and should have been modified or turned off at an some earlier time.  While the results of the two policies appear similar, the quantities of interest $w_i(t)$, $i=1,2,3$ displayed in Figure \ref{fig:wfigures} demonstrate the differences. In particular, with respect to $w_1$, the  policy in  \eqref{choice1} outperforms the constant $\alpha$ policy.
  
\begin{figure}[t!]
\centering
	 \begin{subfigure}[b]{0.24\textwidth}
		\centering
		\includegraphics[width=\textwidth]{nonconstantfrontT0-etaHigh}
		\label{fig:control_ic}
		\caption{initial data }
	\end{subfigure}
    \begin{subfigure}[b]{0.24\textwidth}
            \centering
            \includegraphics[width=\textwidth]{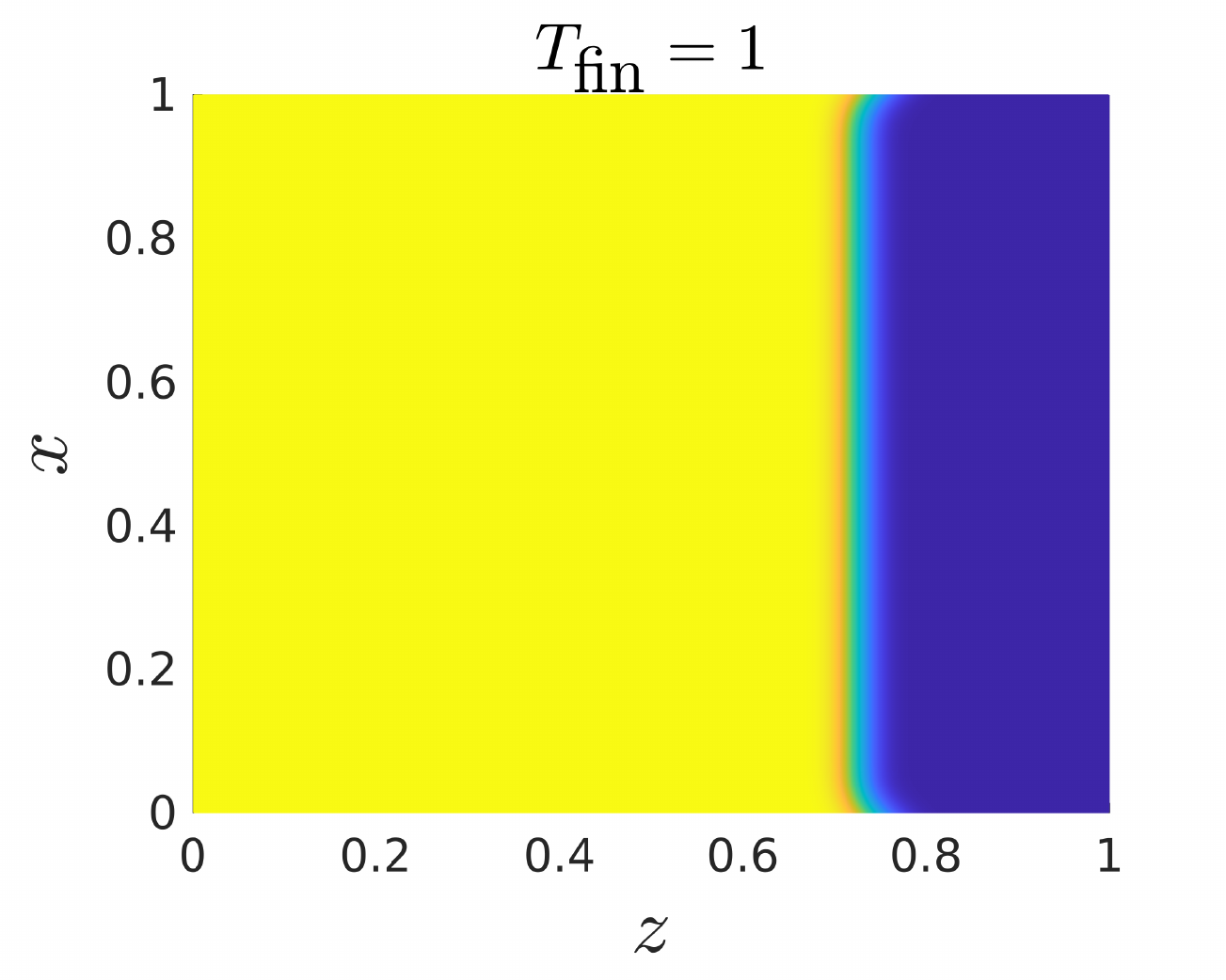}
    \label{fig:control_const_alpha}
    \caption{$\alpha = 0.5$}
    \end{subfigure}
    \begin{subfigure}[b]{0.24\textwidth}
            \centering
            \includegraphics[width=\textwidth]{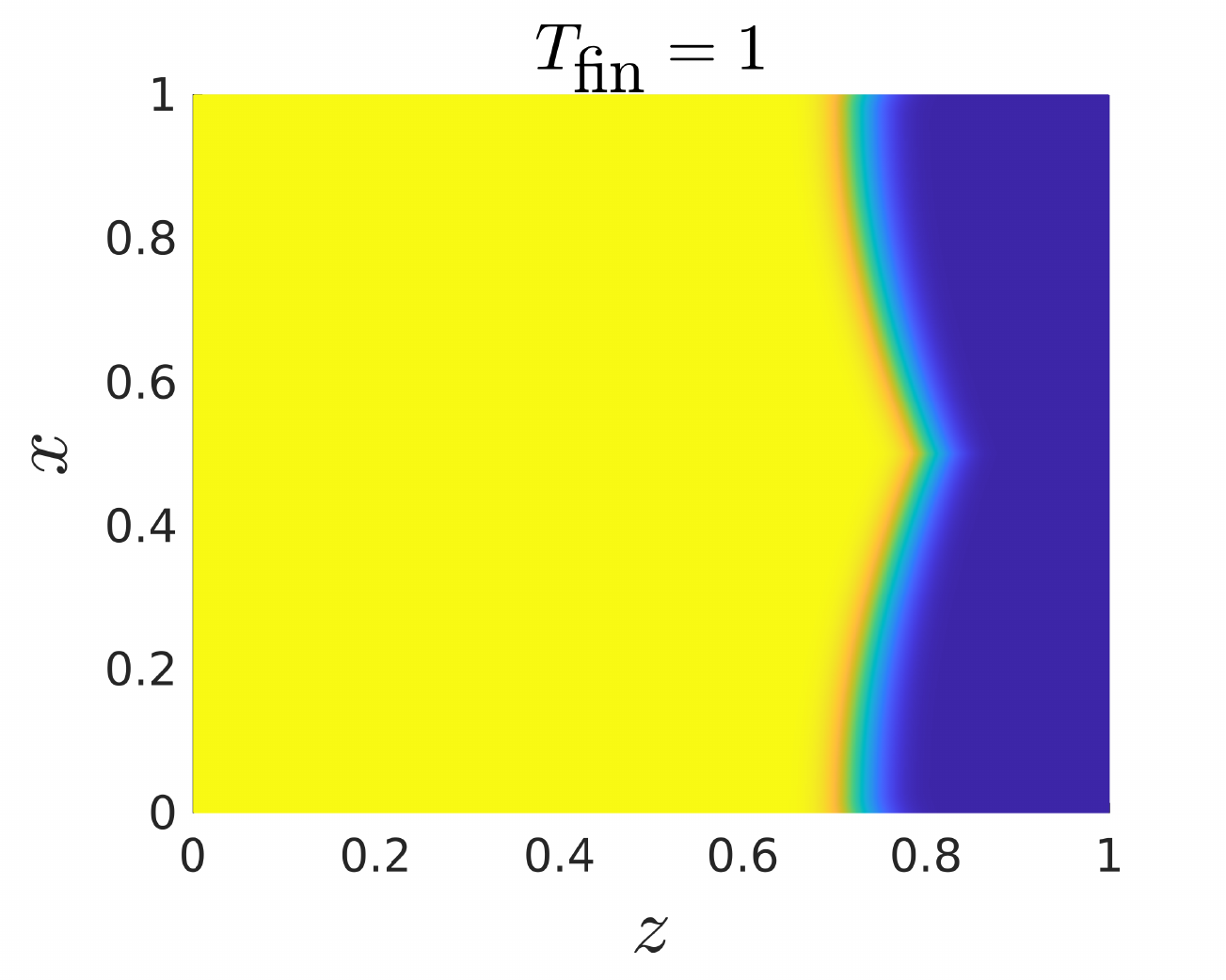}
    \label{fig:control_policy_alpha}
    \caption{$\alpha$ in \eqref{choice1}}
    \end{subfigure}
	\begin{subfigure}[b]{0.24\textwidth}
			\centering
			\includegraphics[width=\textwidth]{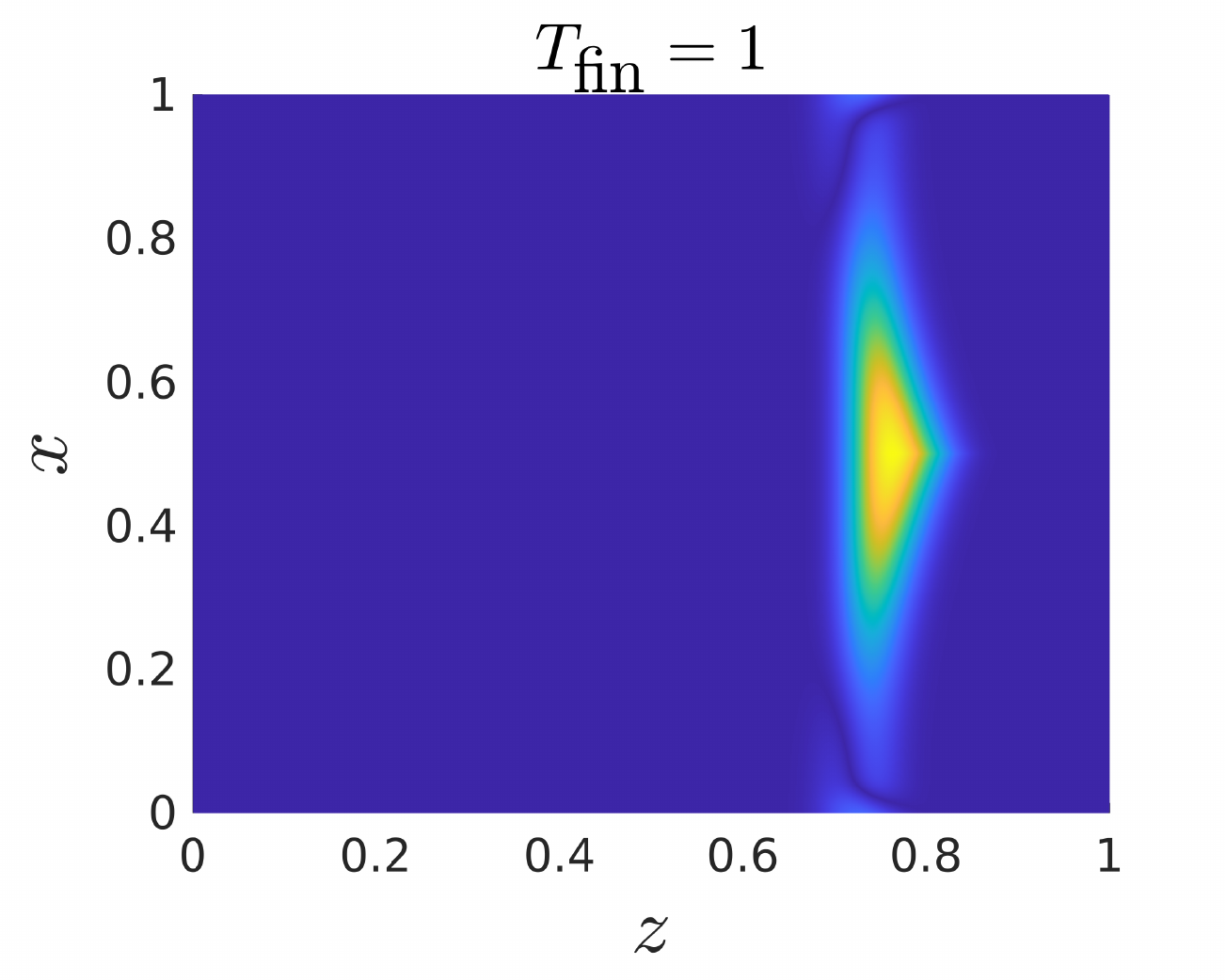}
			\label{fig:differencepolicies}
			\caption{Difference}
	\end{subfigure}
	\caption{Density profiles for two different policies choices of $\alpha$ with the scheme in \eqref{eq: fully_discrete_scheme} using $N^x=N^z=800$.  }  
	\label{fig:control}
\end{figure}

\begin{figure}[t!]
\centering
    \begin{subfigure}[t]{0.32\textwidth}
            \centering
            \includegraphics[width=\textwidth]{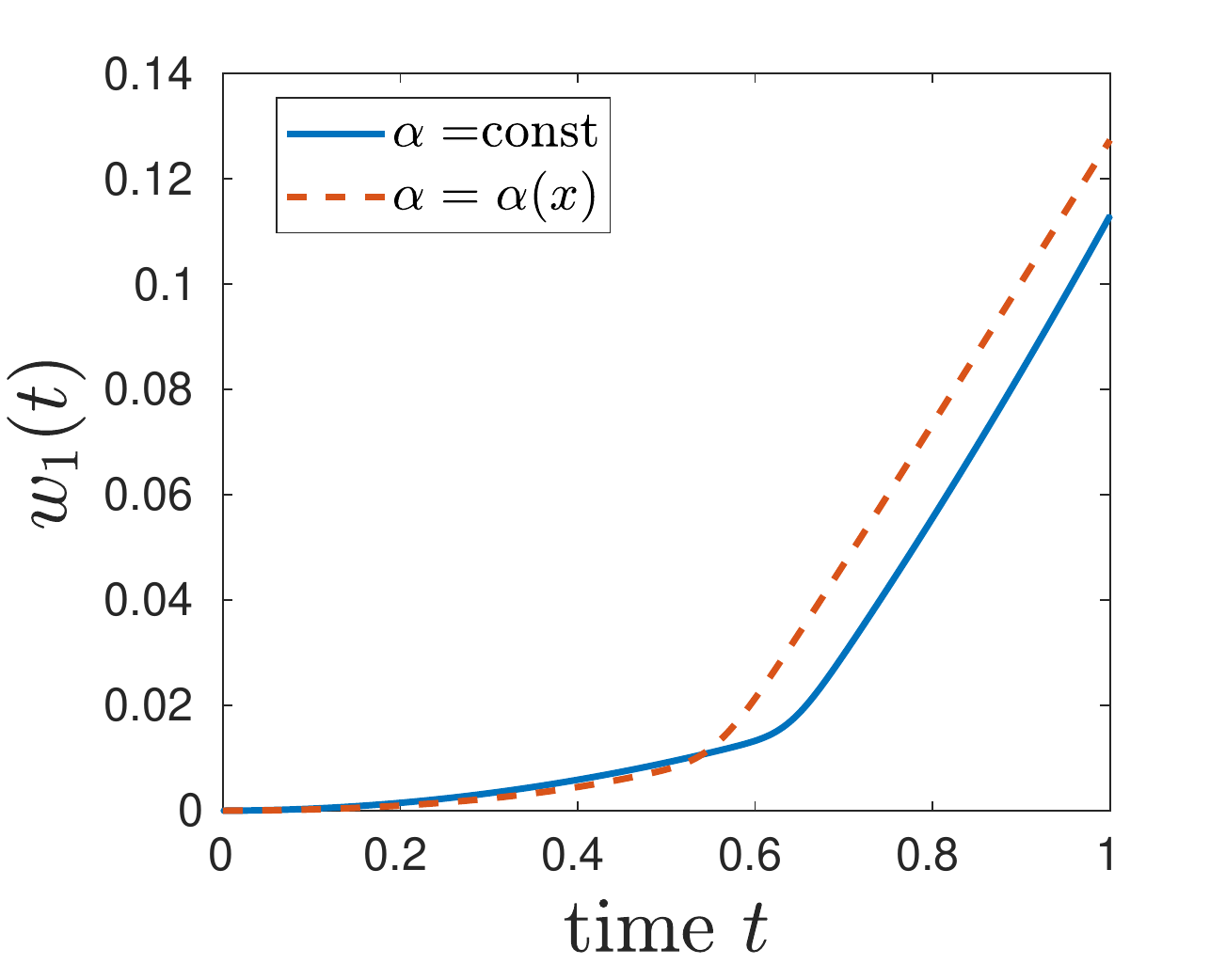}
    \label{fig:wfiguresA}
    \caption{$\omega_1$ at $z = 0.5$}
    \end{subfigure}
    \begin{subfigure}[t]{0.32\textwidth}
            \centering
            \includegraphics[width=\textwidth]{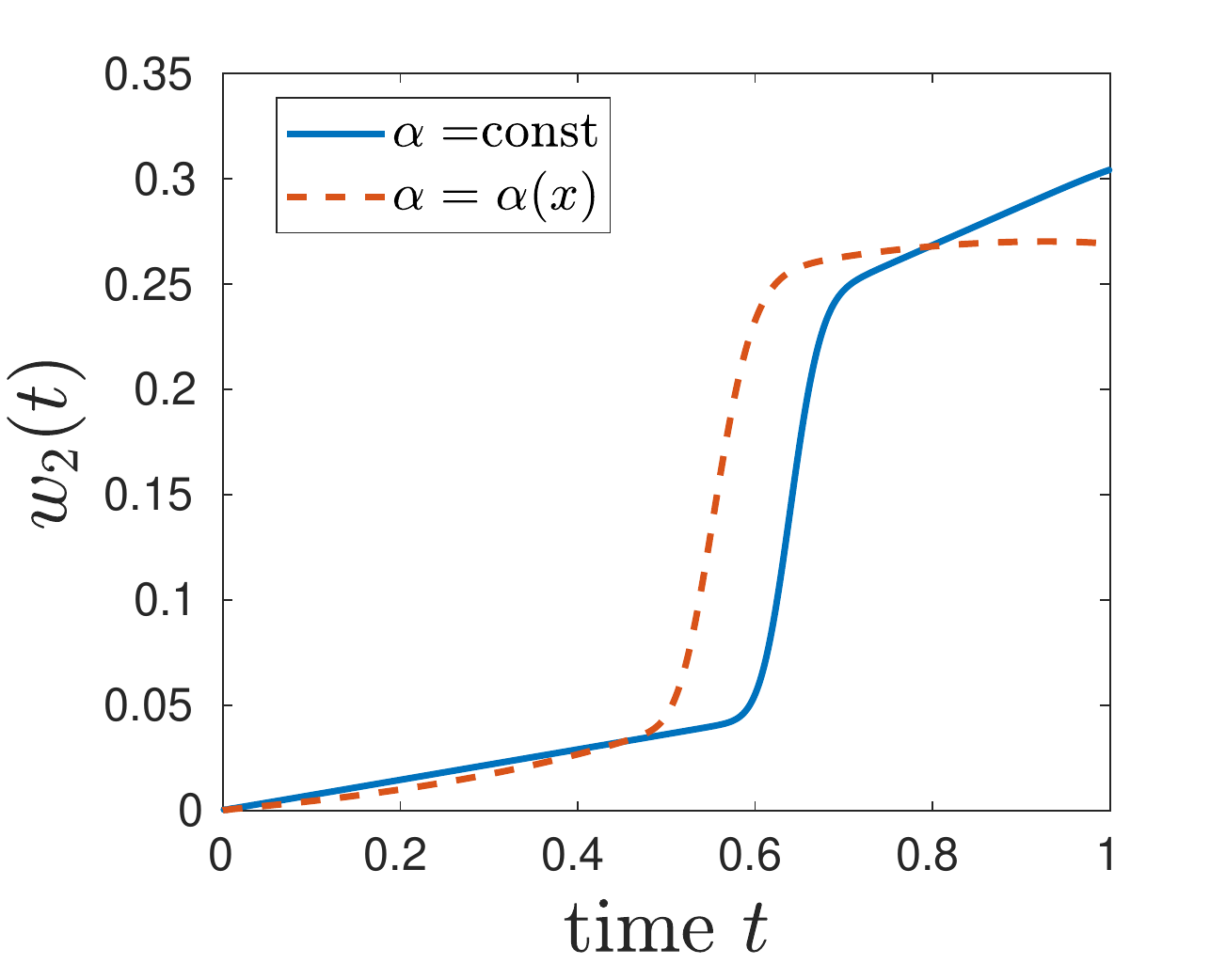}
    \label{fig:wfiguresB}
    \caption{$\omega_2$ at $z = 0.5$}
    \end{subfigure}
    \begin{subfigure}[t]{0.32\textwidth}
            \centering
            \includegraphics[width=\textwidth]{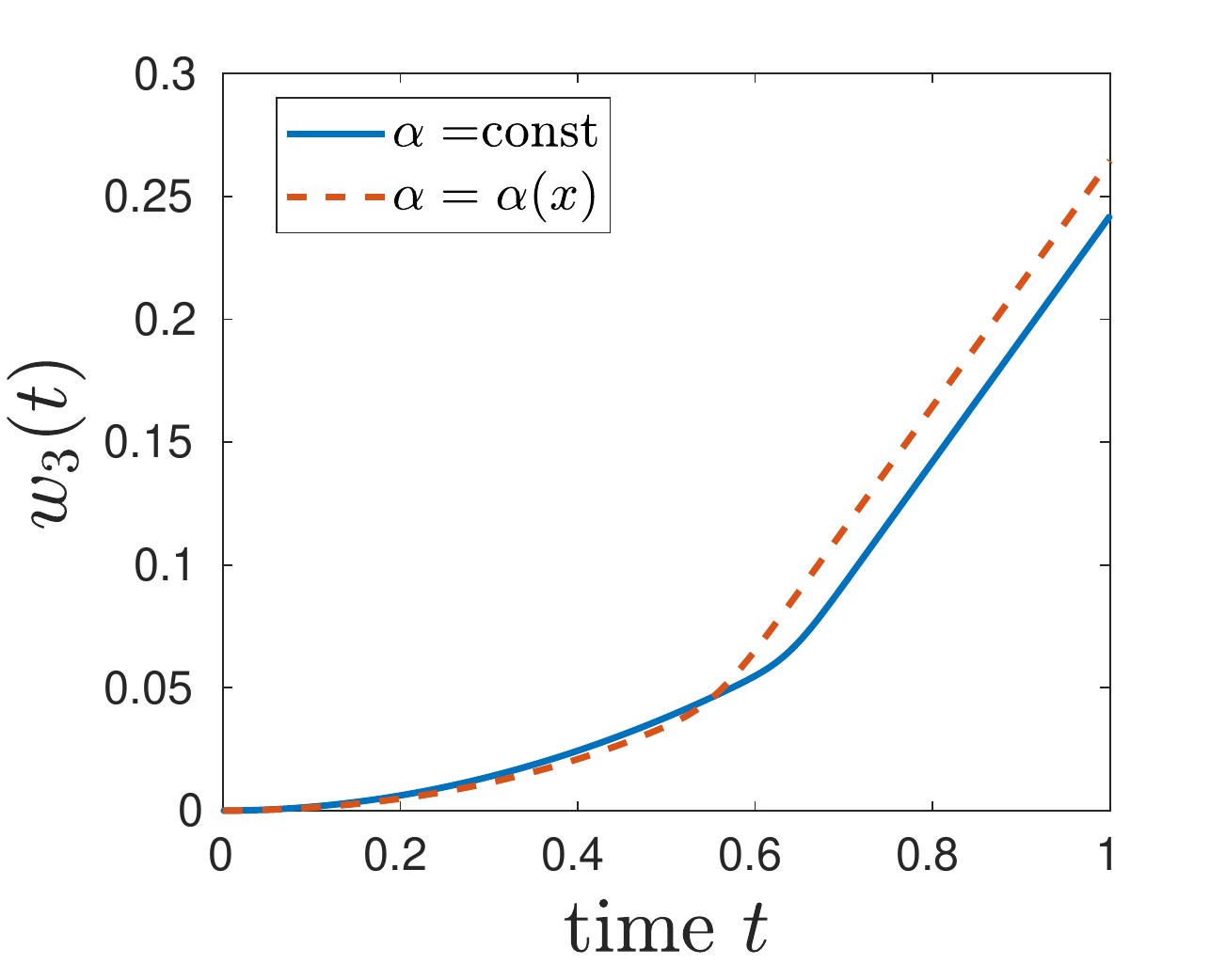}
    \label{fig:wfiguresC}
    \caption{$\omega_3$ at $z = 0.5$}
    \end{subfigure}\\
    \begin{subfigure}[t]{0.32\textwidth}
            \centering
            \includegraphics[width=\textwidth]{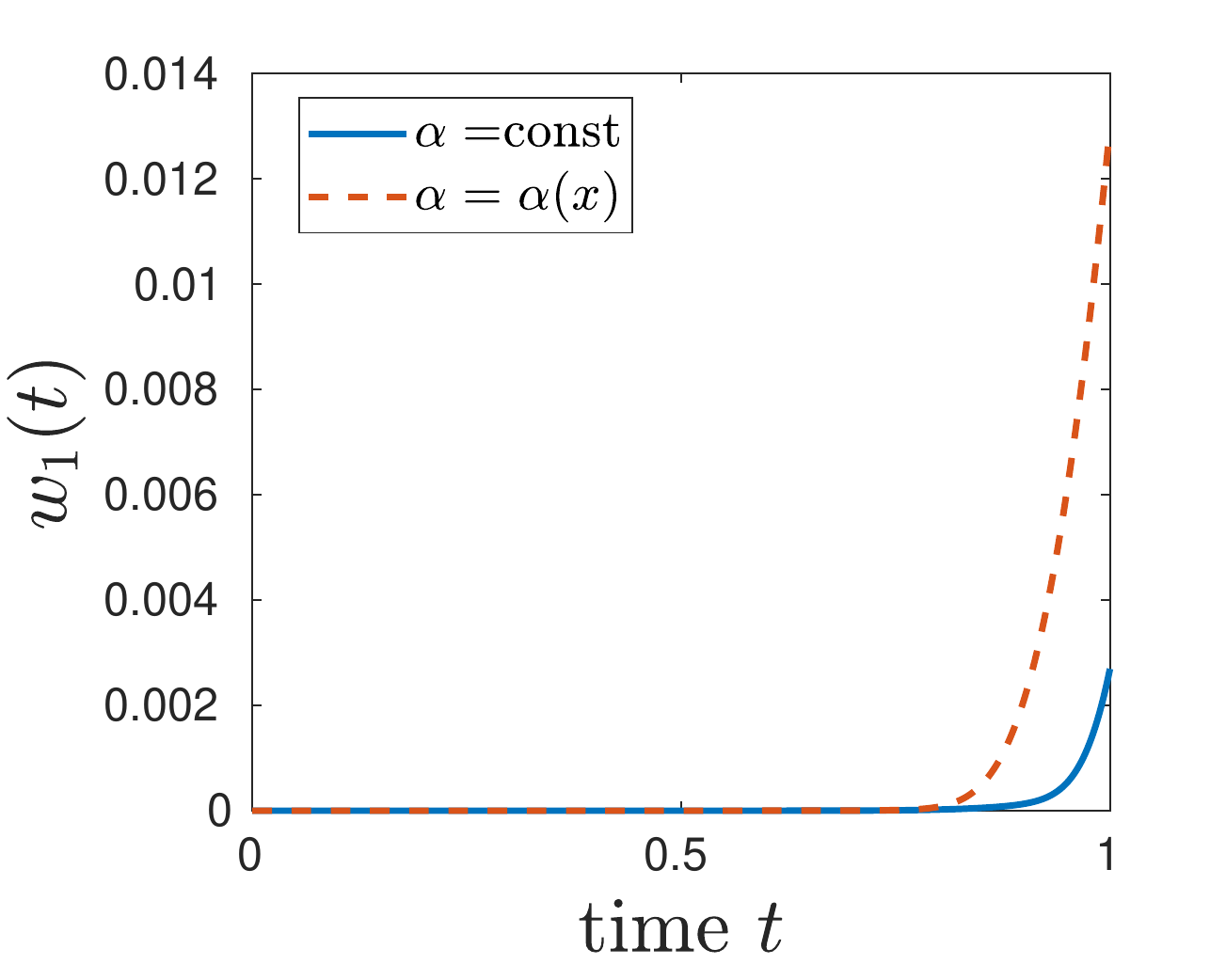}
    \label{fig:wfiguresD}
    \caption{$\omega_1$ at $z = 0.75$}
    \end{subfigure}
    \begin{subfigure}[t]{0.32\textwidth}
            \centering
            \includegraphics[width=\textwidth]{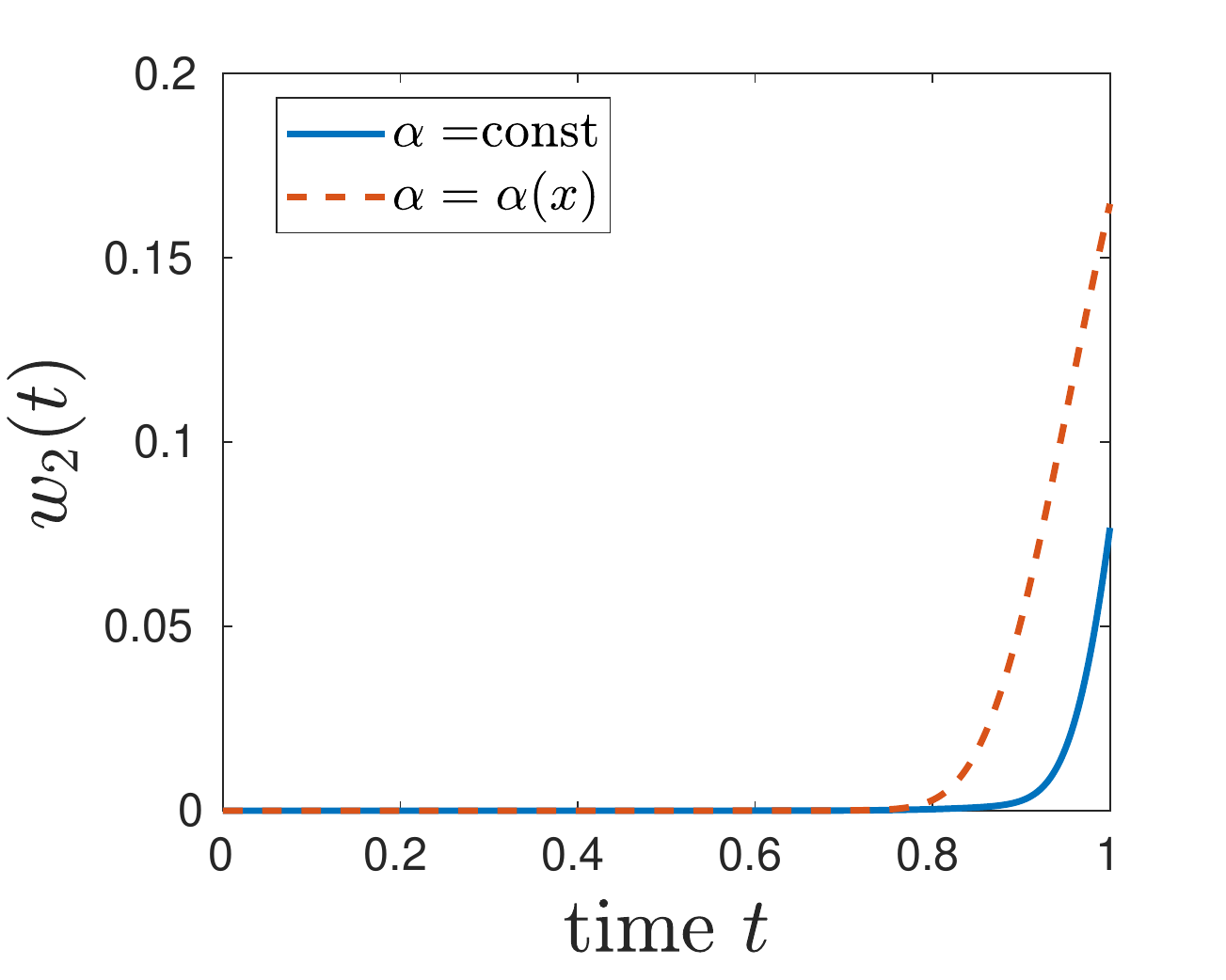}
    \label{fig:wfiguresE}
    \caption{$\omega_2$ at $z = 0.75$}
    \end{subfigure}
    \begin{subfigure}[t]{0.32\textwidth}
            \centering
            \includegraphics[width=\textwidth]{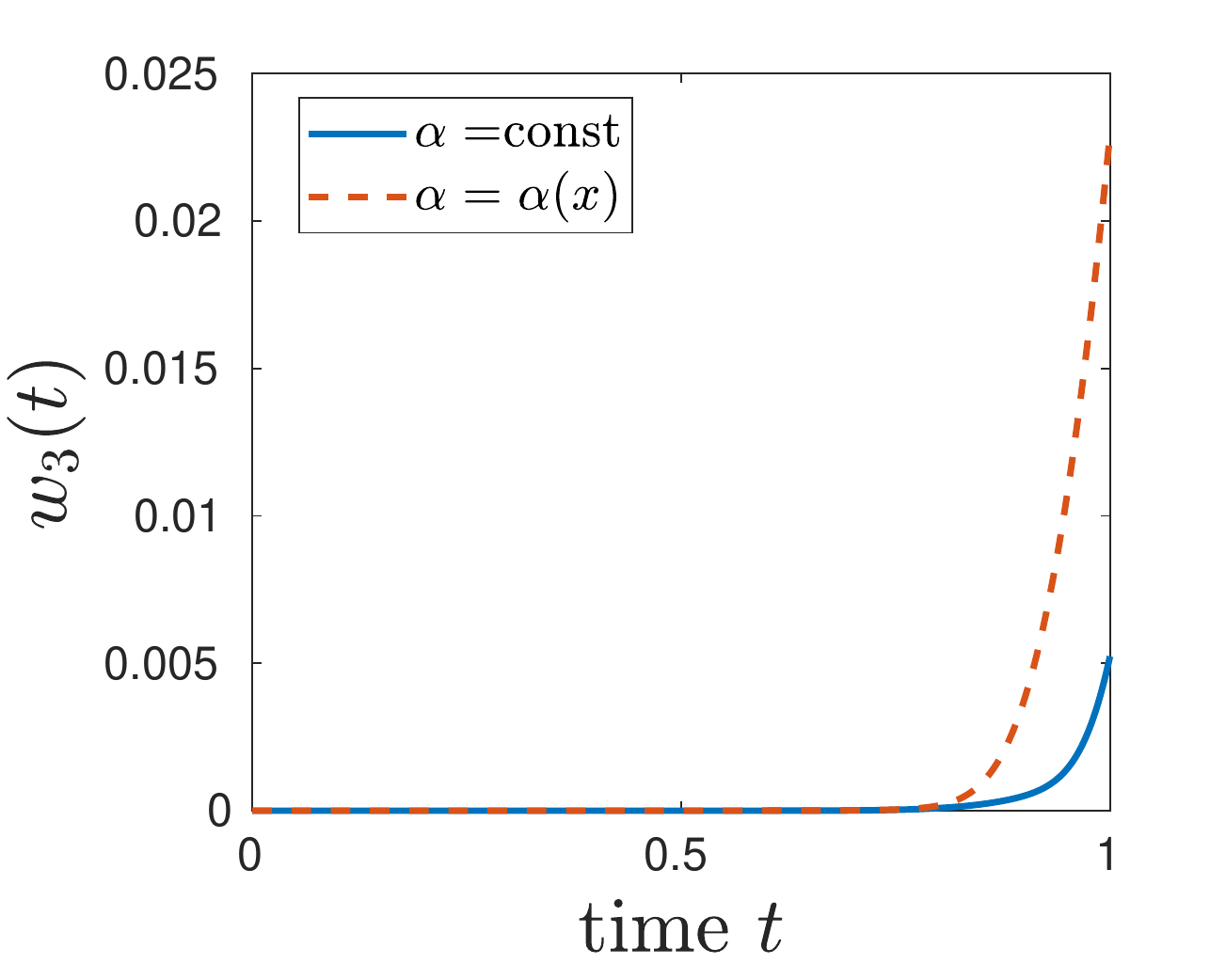}
    \label{fig:wfiguresF}
    \caption{$\omega_3$ at $z = 0.75$}
    \end{subfigure}\\

	\caption{Comparison of the quantities of interest $w_i, i=1,2,3$ given in~\eqref{quantities}. } 
	\label{fig:wfigures}
\end{figure}
\section{Summary}

In this paper, we have analyzed a recently derived mathematical model for the evolution of processed data in large-scale, asynchronous computers \cite{Barnard2019}. 
After the introduction of an auxiliary variable, the model is expressed as a system of partial differential equations.  It is possible to  prove existence of discontinuous solutions to this system for a particular class of initial and boundary conditions.  These solution describes the flow of information as a series of propagating fronts.  A numerical scheme for the reformulated system has also been designed based on a relaxation approximation.  Numerical simulations based on this scheme demonstrate qualitative agreement with theoretical findings. 

We have also briefly explored the effects of local processor speed on quantities of interest predicted by the model.  In future work, we intend to investigate more extensively control mechanisms for optimizing important objectives related to performance of the large-scale computers. Further it is planned to validate the model based on experimental data.

\subsection*{Acknowledgment}
This work has been supported by HE5386/14,15,18-1, ID390621612 Cluster of Excellence Internet of Production (IoP), the US National Science Foundation, RNMS (KI-Net) grant 11-07444, and the U.S. Department of Energy, Office of Advanced Scientific Computing Research. The work of CDH was performed at the Oak Ridge National Laboratory, which is managed by UT-Battelle, LLC under Contract No. De-AC05-00OR22725.

\bibliographystyle{plain}
\bibliography{references}

 \end{document}